\newtheorem{theorem}{Theorem}[section]
\newtheorem{defi}[theorem]{Definition}
\newtheorem{remark}[theorem]{Remark}
\newtheorem{prop}[theorem]{Proposition}
\numberwithin{equation}{section}
\definecolor{red}{rgb}{1.0, 0.0, 0.0}
\newcommand{\Bea}{\begin{eqnarray*}}
	\newcommand{\Eea}{\end{eqnarray*}}
\newcommand{\Be} {\begin{equation*}}
	\newcommand{\Ee} {\end{equation*}}
\newcommand{\be} {\begin{equation}}
	\newcommand{\ee} {\end{equation}}
\newcommand{\bea} {\begin{eqnarray}}
	\newcommand{\eea} {\end{eqnarray}}
\title[ Non-harmonic analysis of the wave equation for Schr\"{o}dinger operators with complex potential]{ Non-harmonic analysis of the wave equation for Schr\"{o}dinger operators with complex potential}
\author[Aparajita Dasgupta]{Aparajita Dasgupta}
\address{
	Aparajita Dasgupta:
	\endgraf
	Department of Mathematics
	\endgraf
	Indian Institute of Technology, Delhi, Hauz Khas
	\endgraf
	New Delhi-110016 
	\endgraf
	India
	\endgraf
	{\it E-mail address:} {\rm adasgupta@maths.iitd.ac.in}
}
\author[Lalit Mohan]{Lalit Mohan}
\address{
	Lalit Mohan:
	\endgraf
	Department of Mathematics
	\endgraf
	Indian Institute of Technology, Delhi, Hauz Khas
	\endgraf
	New Delhi-110016 
	\endgraf
	India
	\endgraf
	{\it E-mail address:} {\rm mohanlalit871@gmail.com}
}
\author[Shyam Swarup Mondal]{Shyam Swarup Mondal}
\address{
	Shyam Swarup Mondal:
	\endgraf
	Stat-Math unit
	\endgraf
	Indian Statistical Institute Kolkata 
	\endgraf
	BT Road,  Baranagar, Kolkata 700108
	\endgraf
	India
	\endgraf
	{\it E-mail address:} {\rm mondalshyam055@gmail.com}
}
\date{\today}
\subjclass{Primary 46F05; Secondary 58J40, 22E30.}
\keywords{Schr\"{o}dinger operator, Complex potential, Well-posedness, Non-harmonic analysis, Very weak solution.}
\begin{document}
	
	\begin{abstract}
	This article investigates the wave equation for the Schr\"{o}dinger operator on $\mathbb{R}^{n}$, denoted as $\mathcal{H}_0:=-\Delta+V$, where $\Delta$ is the standard Laplacian and $V$ is a complex valued multiplication operator.  We prove that the operator  $\mathcal{H}_0$,  with  $\operatorname{Re}(V)\geq 0$ and $\operatorname{Re}(V)(x)\to\infty$ as $|x|\to\infty$, has a purely discrete spectrum under certain conditions.  In the spirit of Colombini, De Giorgi, and Spagnolo, we also prove that the Cauchy problem with regular coefficients is well-posed in the associated Sobolev spaces, and when the propagation speed is H\"{o}lder continuous (or more regular), it is well-posed in Gevrey spaces. Furthermore, we prove that it is very weakly well-posed when the coefficients possess a distributional singularity.
	\end{abstract}
	\maketitle
	\tableofcontents
	
		\section{Introduction}\label{sec1 ch5}

In this paper, we investigate the well-posedness of the Cauchy problem for time-dependent wave equations associated to a Schr\"{o}dinger operator with complex potential.  Let $\mathcal{H}_{0}$ be the  Schr\"{o}dinger operator on $\mathbb{R}^n$ with complex potential $V$ defined by
\begin{equation}\label{schrodinger operator defi ch5}
	\mathcal{H}_0 u(x)= (-\Delta + V) u(x), \quad x \in \mathbb{R}^n,
\end{equation}
where $\Delta$ is the usual Laplacian on $\mathbb{R}^n$.

For a non-negative function $a=a(t)\geq 0$ and for the source term $f=f(t,x)$, we are interested in the the Cauchy problem associated for the operator $\mathcal{H}_{0}$ with the propagation speed $a$, namely
	\begin{equation}\label{initial cauchy problem ch5}
		\left\{\begin{array}{l}
			\partial_t^2 v(t, x)+a(t)\mathcal{H}_0v(t, x)+q(t)v(t, x)=f(t, x),\quad (t, x) \in[0, T] \times \mathbb{R}^n, \\
			v(0, x)=v_0(x),\quad x \in  \mathbb{R}^n, \\
			\partial_t v(0, x) =v_1(x),\quad x \in \mathbb{R}^n,
		\end{array}\right.
\end{equation}
where $q$ is a bounded real-valued function.  In order to provide a detailed analysis of these problems, we will examine various cases based primarily on the properties of the propagation speed  $a,$ and to a lesser extent, those of the source term
$f$.   This approach is crucial because the optimal outcomes differ depending on the characteristics of $a$.  Particularly, we will consider the following cases:
\begin{itemize} 
       \item {Case} 1: The coefficient $a$ is regular enough: $a \in L_{1}^{\infty} ([0, T])$ be such that $\underset{t \in [o,T]}{\inf} a(t)=a_{0}>0$,
       
      \item {Case} 2: $a \in \mathcal{C}^\alpha([0, T])$, with $0<\alpha<1 $ such that $\underset{t \in [o,T]}{\inf} a(t)=a_{0}>0$,
      
    \item  {Case} 3: $a \in \mathcal{C}^l([0, T])$, with $l \geq 2$ and $ a(t) \geq 0$,
    
    \item {Case} 4:  $a \in \mathcal{C}^\alpha([0, T])$, with $0<\alpha<2$ and $ a(t) \geq 0$.
\end{itemize}

    

Here, it's important to note that when  $V$ is a complex potential, the operator $\mathcal{H}_0$ is non-self-adjoint; see \cite{RR09}. However,  in Theorem \ref{discrte spectrum result ch5}, we proved that the operator $\mathcal{H}_0$ has discrete spectrum with its eigenfunctions yielding a bi-orthogonal basis of $L^2.$  General bi-orthogonal systems, explored by Bari \cite{bari}, provide a suitable framework for our constructions; see also Gelfand \cite{gelfand}. Similar systems, though slightly more general yet fundamentally the same, are referred to as `Hilbert systems' by Bari \cite{bari} and `Quasi-Orthogonal Systems' by Kac, Salem, and Zygmund \cite{kac}. Here the description of appearing function spaces is performed in the spirit of   employing  general development of non-harmonic type analysis carried out by the authors in \cite{Ruz16}.
 Notably, as illustrated in \cite{AD&VK&LM&SSM,Ruz16,ruzhansky2017nonharmonic,MR&JPVR}, there are intriguing applications of non-harmonic theory in partial differential equations, particularly concerning the wave equation \cite{ruzhansky2017very, ruzhansky2017wave}. Building on Seeley's non-harmonic analysis on compact manifolds \cite{seeley1,seeley2}, the non-harmonic analysis of boundary value problems is  akin to global pseudo-differential analysis on closed manifolds, as described in \cite{delgado2,delgado1,DMT17}. This analysis effectively addresses numerous issues, such as characterizing classes of functions and distributions of the Komatsu type \cite{apara}. Furthermore, it provides a mathematical framework for studying linear operators within the context of non-self-adjoint quantum mechanics \cite{inoue2016non,mostafazadeh2010pseudo}. The approach to addressing global well-posedness in the relevant function spaces in this paper extends the method developed in \cite {garetto micheal} within the context of compact Lie groups.

The family of linear operators applies to various potentials $V$ and serves to distinguish various quantum systems. We recall a few fundamental quantum systems along with their respective spectra.
 \begin{itemize}
     \item The Schr\"{o}dinger operator, characterized by free motion (i.e., the absence of any external force on the electrons), is represented by the Laplacian function that possesses a continuous spectrum inside the positive real axis. 
     \item The Coulomb potential, that defines the Schr\"{o}dinger operator of a hydrogen atom with an infinitely heavy nucleus positioned at the origin,  is given by 
	$$
	\mathcal{H}_0= -\Delta + \frac{1}{|x|^2}.
	$$
	It has the essential spectrum contained in the positive half real-axis, while the negative real-axis contains isolated eigenvalues with finite multiplicity. 
 \item A totally discrete spectrum is exhibited by the Schr\"{o}dinger operator, which is a quantum harmonic oscillator with potential $V (x) = |x|^2$ and an anharmonic oscillator with potential $V (x, y) = x^2 y^2$.
 \end{itemize}    For more information about Schr\"{o}dinger operators, we   refer to \cite{Dziubanski,fernandez,gustafson,hislop}. In addition, if the potential $V: \mathbb{R}^n \rightarrow \mathbb{R}$ be a non-negative continuous function such that $|V(x)| \rightarrow \infty$ as $|x| \rightarrow \infty$, then the essential spectrum of $\mathcal{H}_0$, 
$\sigma_{\mathrm{ess}}\left(\mathcal{H}_0\right)=\emptyset$, i.e., the operator $\mathcal{H}_0$  has completely discrete spectrum, see \cite{gustafson}.   In the direction of discrete lattice,  there is limited literature available concerning the study of the spectrum of discrete Schr\"{o}dinger operator on $\hbar\mathbb{Z}^{n}$.    Regarding the discrete spectrum for the discrete Schr\"{o}dinger operator with potential, one can see \cite{abhilash}. We also refer to    \cite{Rab10, Rab13, RR04, RR06, RR09} regarding the spectrum for the Schr\"{o}dinger operator with or without potential on the discrete lattice  $\mathbb{Z}^{n}$. Here, the authors carried a study on essential spectrum of Schr\"{o}dinger operators that have potentials which oscillate slowly as they approach infinity and approximate the eigenfunctions associated with the corresponding Schr\"{o}dinger operators. They also examined operators with periodic and semi-periodic potentials. Additionally, they investigated Schr\"{o}dinger operators that serve as discrete quantum analogs of acoustic propagators for waveguides. 

In the papers referred to above, Schrödinger operators with real potentials have been examined. In contrast, our paper focuses on Schrödinger operators with complex potentials, which are non-self-adjoint operators. To study the spectrum of such operators, one of the essential ingredients is Weyl's theorem, which asserts that if $A$
 is a self-adjoint operator and $B$ is
 symmetric and compact,  then $ess(A+B)=ess(A)$.  This was first proved  (in a special case) by  H. Weyl in \cite{weyl} and is now commonly known as Weyl's theorem.    Later, M. Schechter   \cite{Schechter} established an alternative version of Weyl's theorem for non-self-adjoint operators  under the assumption that $B$ is relatively compact with respect to 
$A$. In particular, in this paper, we have demonstrated using Schechter's result that the spectrum of Schr\"{o}dinger operator $\mathcal{H}_{0}$  with complex potentials is discrete. More specifically, regarding the spectrum of  $\mathcal{H}_{0}$, we have the following result.
\begin{theorem} 
	Let $\operatorname{Re}(V)\geq 0$ and $\operatorname{Re}(V)(x) \rightarrow \infty$ as $|x| \rightarrow \infty$. Also, assume that $i\operatorname{Im}(V)$ is relatively compact with respect to the operator $\mathcal{H}_{00}$, where $\mathcal{H}_{00}=-\Delta + \operatorname{Re}(V)$. Then the operator $\mathcal{H}_{0}$ has a purely discrete spectrum.
\end{theorem}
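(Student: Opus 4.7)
The plan is to reduce the problem to the self-adjoint case and then invoke Schechter's version of Weyl's theorem to pass from $\mathcal{H}_{00}$ to $\mathcal{H}_0$ via the relatively compact perturbation $i\operatorname{Im}(V)$. Write $\mathcal{H}_0 = \mathcal{H}_{00} + i\operatorname{Im}(V)$, where $\mathcal{H}_{00} = -\Delta + \operatorname{Re}(V)$ is self-adjoint because $\operatorname{Re}(V)$ is real-valued.

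First I would establish that $\mathcal{H}_{00}$ has purely discrete spectrum. Since $\operatorname{Re}(V) \geq 0$ is continuous and $\operatorname{Re}(V)(x) \to \infty$ as $|x| \to \infty$, this is exactly the classical confining-potential result that is recalled in the introduction (see e.g.\ \cite{gustafson}): the essential spectrum of $\mathcal{H}_{00}$ is empty, so its spectrum consists entirely of isolated eigenvalues of finite multiplicity. Concretely, one can argue via the compactness of the embedding of the form-domain $\mathcal{D}(\mathcal{H}_{00}^{1/2}) = \{u \in H^1(\mathbb{R}^n) : \operatorname{Re}(V)^{1/2} u \in L^2\}$ into $L^2(\mathbb{R}^n)$, which forces the resolvent of $\mathcal{H}_{00}$ to be compact.

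Next, I would invoke Schechter's extension of Weyl's theorem \cite{Schechter}: if $A$ is closed and $B$ is $A$-compact (i.e.\ relatively compact with respect to $A$), then $\sigma_{\mathrm{ess}}(A+B) = \sigma_{\mathrm{ess}}(A)$. This statement does not require self-adjointness of $A+B$, which is essential here because $\mathcal{H}_0$ is non-self-adjoint. By hypothesis $i\operatorname{Im}(V)$ is relatively compact with respect to $\mathcal{H}_{00}$, so Schechter's theorem yields
\begin{equation*}
\sigma_{\mathrm{ess}}(\mathcal{H}_0) \;=\; \sigma_{\mathrm{ess}}(\mathcal{H}_{00} + i\operatorname{Im}(V)) \;=\; \sigma_{\mathrm{ess}}(\mathcal{H}_{00}) \;=\; \emptyset.
\end{equation*}
Combined with the fact that $\mathcal{H}_0$ is closed on the natural domain (the graph norm of $\mathcal{H}_{00}$ dominates that of $i\operatorname{Im}(V)$ via the $\mathcal{H}_{00}$-compactness), this forces $\sigma(\mathcal{H}_0)$ to consist solely of isolated eigenvalues of finite algebraic multiplicity, which is exactly the notion of purely discrete spectrum.

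The main obstacle I anticipate is bookkeeping around the precise notion of essential spectrum used for non-self-adjoint operators: there are several inequivalent definitions, and one must check that the version preserved by Schechter's theorem is strong enough to guarantee that its emptiness implies pure discreteness of $\sigma(\mathcal{H}_0)$ in the sense needed later in the paper (eigenfunctions forming a bi-orthogonal basis of $L^2$). A secondary, more technical point is verifying closedness of $\mathcal{H}_0$ on $\mathcal{D}(\mathcal{H}_{00})$, which follows from the Kato-type perturbation inequality implied by $\mathcal{H}_{00}$-compactness of $i\operatorname{Im}(V)$; once that is in place, the spectral conclusion is immediate from the two ingredients above.
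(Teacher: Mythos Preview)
Your proposal is correct and follows essentially the same route as the paper's own proof: first invoke the confining-potential result from \cite{gustafson} to get $\sigma_{\mathrm{ess}}(\mathcal{H}_{00})=\emptyset$, then apply Schechter's Weyl-type theorem \cite{Schechter} for the relatively compact perturbation $i\operatorname{Im}(V)$ to conclude $\sigma_{\mathrm{ess}}(\mathcal{H}_0)=\emptyset$. The paper's argument is terser and does not pause over the subtleties you flag (different notions of essential spectrum, closedness of $\mathcal{H}_0$), but the strategy is identical.
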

A detailed proof of the above theorem is provided in Section \ref{spectrum and fourier analysis ch5}. We further observe that the condition requiring the spectru of $\mathcal{H}_0$ to be non-negative can be partially relaxed. Specifically, let $\mathcal{H}_0$ be the densely defined Schr\"{o}dinger operator on $L^2(\mathbb{R}^n)$ with a discrete spectrum $\{\lambda_{\xi}\in \mathbb{C} : \xi\in \mathcal{I}\}$, with corresponding eigenfunctions forming basis of $L^2(\mathbb{R}^n)$, where $I$ is an ordered countable set. Define $\mathcal{H}:=|\mathcal{H}_0|$ as the operator with eigenvalue $|\lambda_\xi|$ for each eigenfunction $e_{\xi}$. If $\lambda_\xi=0$ for some $\xi$, for instance, we consider $\mathcal{H} := |\mathcal{H}_1|$,  the operator with eigenvalue $|\lambda_\xi|+c$ for each eigenfunction $e_\xi$, where $c>0$ is a positive constant.

It should be noted that $\mathcal{H}$ is not the absolute value of $\mathcal{H}_0$ in the operator sense, as $\mathcal{H}_0$ and its adjoint $\mathcal{H}_0^{\ast}$, may have different domains and are generally not composable. However, this definition is well-established through the symbolic calculus developed in \cite{Ruz16} and extended in \cite{ruzhansky2017nonharmonic} to encompass the full pseudo-differential calculus, even without the condition that eigenfunctions have no zeros. Therefore, in this paper, we study the following Cauchy problem, which is a modification of Cauchy problem \eqref{initial cauchy problem ch5} with $\mathcal{H}=|\mathcal{H}_{0}|$:
\begin{equation}\label{cauchy problem ch5}
		\left\{\begin{array}{l}
			\partial_t^2 v(t, x)+a(t)\mathcal{H}v(t, x)+q(t)v(t, x)=f(t, x),\quad (t, x) \in[0, T] \times \mathbb{R}^n, \\
			v(0, x)=v_0(x),\quad x \in  \mathbb{R}^n, \\
			\partial_t v(0, x) =v_1(x),\quad x \in \mathbb{R}^n.
		\end{array}\right.
\end{equation}

	Returning to the primary focus of this article, numerous authors have conducted significant studies on the Cauchy problem of the form \eqref{cauchy problem ch5}. Particularly, to deal with regular coefficients and source terms, one can refer to works \cite{colombini2013,colombini1979,colombini2002,colombini2003}. On the other hand, for distributional irregularities, one may consider $q$ to be the $\delta$-distribution when electric potential induces shocks or to be a Heaviside function when the speed of propagation is discontinuous. In particular, here, we are interested in the appearance of distributional irregularities like $\delta$, $\delta^2$, $\delta\delta^{\prime}$ etc. The impossibility of multiplying distributions poses significant mathematical challenges for the conventional distributional interpretation of the equation and the Cauchy problem, as highlighted by Schwartz \cite{schwartz}. However, one can solve this issue by establishing the well-posedness of the problem using the concept of very weak solutions, as introduced in \cite{garetto}, within the framework of space-invariant hyperbolic problems. Subsequently, the implementation of several physical models was carried out in \cite{garetto2021,garetto micheal, ruzhansky2017wave}. Also, the wave equations related to operators with a purely discrete spectrum and coefficients with distributional irregularities have been thoroughly investigated in \cite{ancona,abhilash,abhilash2,ruzhansky2017very,ruzhansky2017wave,ruzhansky2019wave}. 
 
 Apart from the introduction, the organization of the paper is as follows.
 \begin{itemize}
     \item In Section \ref{main result sec2 ch5}, we discuss results concerning  the Case 1 - Case 4.

     \item In Section \ref{spectrum and fourier analysis ch5},  we will review the basic concepts of harmonic analysis within the context of non-harmonic analysis. Particularly, we recall the concepts and techniques provided by M. Ruzhansky and N. Tokmagambetov  in \cite{Ruz16} 

     \item   In Section \ref{proof of results ch5}, we prove main results for each Case 1  to Case 4.

     \item In Section \ref{very weak solutions section ch5}, we discuss the case of distributional irregularities, i.e., we allow the coefficients to be irregular and investigate the existence of very weak solutions to the Cauchy problem 
 (\ref{cauchy problem ch5}).
 \end{itemize}

	\section{Main results: Comprehensive review}\label{main result sec2 ch5}
Throughout the paper, we denote $L_1^{\infty}([0, T])$, the space of all  such   $a \in L^{\infty}([0, T])$ differentiable function which satisfies the condition $\partial_t a \in L^{\infty}([0, T])$.  Also,  the Sobolev space $H_{\mathcal{H}}^s, s \in \mathbb{R},$ associated to the operator $\mathcal{H}$,   is  defined as
	$$
	H_{\mathcal{H}}^s:=\left\{f \in \mathcal{D}_{\mathcal{H}}^{\prime}\left(\mathbb{R}^n\right): \mathcal{H}^{s / 2} f \in L^2\left(\mathbb{R}^n\right)\right\}
	$$
equipped	with the norm $\|f\|_{H_{\mathcal{H}}^s}:=\left\|\mathcal{H}^{s / 2} f\right\|_{L^2}$ and the space of distributions $\mathcal{D}_{\mathcal{H}}^{\prime}\left(\mathbb{R}^n\right)$ is defined in Section \ref{spectrum and fourier analysis ch5}.

	First, we investigate   Cauchy problem \eqref{cauchy problem ch5} involving regular coefficients $a \in L_{1}^{\infty}([0, T])$ and $q \in L^{\infty}([0, T])$ and obtain   the following well-posedness result  in the Sobolev spaces $H_{\mathcal{H}}^s $ to the Cauchy problem \eqref{cauchy problem ch5}.
\begin{theorem}\textbf{(Case 1)}\label{classical sol case1 ch5}
 Let $T>0$ and $s \in \mathbb{R}$. Assume that $a \in L_{1}^{\infty} ([0, T])$ be such that $\underset{t \in [o,T]}{\inf} a(t)=a_{0}>0$ and $q \in L^{\infty}([0, T])$. Let $f \in L^{2}\left([0, T] ; \mathrm{H}_{\mathcal{H}}^{s}\right)$. If the initial Cauchy data $\left(v_{0}, v_{1}\right) \in \mathrm{H}_{\mathcal{H}}^{1+s} \times \mathrm{H}_{\mathcal{H}}^{s}$, then the Cauchy problem \eqref{cauchy problem ch5} has a unique solution $v \in C\left([0, T] ; \mathrm{H}_{\mathcal{H}}^{1+s}\right) \bigcap C^{1}\left([0, T] ; \mathrm{H}_{\mathcal{H}}^{s}\right)$ satisfying the estimate
 \begin{equation}\label{classical sol estimate ch5}
 \|v(t, \cdot)\|_{\mathrm{H}_{\mathcal{H}}^{1+s}}^{2}+\left\|v_{t}(t, \cdot)\right\|_{\mathrm{H}_{\mathcal{H}}^{s}}^{2} \leq C\left(\left\|v_{0}\right\|_{\mathrm{H}_{\mathcal{H}}^{1+s}}^{2}+\left\|v_{1}\right\|_{\mathrm{H}_{\mathcal{H}}^{s}}^{2}+\|f\|_{L^{2}\left([0, T] ; \mathrm{H}_{\mathcal{H}}^{s}\right)}^{2}\right),
 \end{equation}
for all $t \in[0, T]$, where the constant $C>0$ independent of $t \in[0, T]$.
	\end{theorem}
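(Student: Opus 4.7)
My approach is the standard spectral/energy method adapted to the non-harmonic Fourier analysis of Ruzhansky--Tokmagambetov that the authors invoke in Section \ref{spectrum and fourier analysis ch5}. Since $\mathcal{H}=|\mathcal{H}_0|$ (possibly $|\mathcal{H}_0|+c$) has a purely discrete spectrum $\{|\lambda_\xi|\}_{\xi\in\mathcal{I}}$ with a biorthogonal eigen-system forming a basis of $L^2(\mathbb{R}^n)$, applying the $\mathcal{H}$-Fourier transform $\widehat{\cdot}$ to \eqref{cauchy problem ch5} decouples it into the family of scalar ODEs
\begin{equation*}
\partial_t^2 \widehat{v}(t,\xi)+\bigl(a(t)|\lambda_\xi|+q(t)\bigr)\widehat{v}(t,\xi)=\widehat{f}(t,\xi),\qquad \widehat{v}(0,\xi)=\widehat{v_0}(\xi),\quad \partial_t\widehat{v}(0,\xi)=\widehat{v_1}(\xi),
\end{equation*}
parametrized by $\xi\in\mathcal{I}$. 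Existence and uniqueness at the ODE level is immediate from the $L^\infty$-Carath\'eodory theory, so the real work is the uniform-in-$\xi$ a priori estimate.

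The main step is to choose the right per-mode energy. Setting
\begin{equation*}
E_\xi(t):=|\partial_t\widehat{v}(t,\xi)|^2+\bigl(a(t)|\lambda_\xi|+1\bigr)|\widehat{v}(t,\xi)|^2,
\end{equation*}
a direct differentiation together with substitution of the ODE makes the two terms $\pm 2a(t)|\lambda_\xi|\operatorname{Re}(\widehat{v}\,\overline{\partial_t\widehat{v}})$ cancel, leaving
\begin{equation*}
E_\xi'(t)=a'(t)|\lambda_\xi||\widehat{v}|^2+2\operatorname{Re}\!\bigl((\widehat{f}-q(t)\widehat{v})\overline{\partial_t\widehat{v}}\bigr)+2\operatorname{Re}(\widehat{v}\,\overline{\partial_t\widehat{v}}).
\end{equation*}
The hypotheses $a\in L_1^\infty$ and $\inf a=a_0>0$ are crucial here: they yield $|a'(t)|/a(t)\leq \|a'\|_\infty/a_0$, so that $a'(t)|\lambda_\xi||\widehat{v}|^2\leq (\|a'\|_\infty/a_0)\cdot a(t)|\lambda_\xi||\widehat{v}|^2\leq (\|a'\|_\infty/a_0) E_\xi(t)$. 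Combining this with Cauchy--Schwarz on the remaining terms and $\|q\|_\infty<\infty$ produces the key differential inequality
\begin{equation*}
E_\xi'(t)\leq C\,E_\xi(t)+|\widehat{f}(t,\xi)|^2,
\end{equation*}
with $C$ depending on $a_0,\|a'\|_\infty,\|q\|_\infty$ but independent of $\xi$. Gronwall's inequality then gives $E_\xi(t)\leq C'\bigl(E_\xi(0)+\int_0^T|\widehat{f}(s,\xi)|^2\,ds\bigr)$ uniformly in $\xi$.

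To obtain the $H_{\mathcal H}^s$-type estimate \eqref{classical sol estimate ch5}, I multiply through by $|\lambda_\xi|^s$ (or the appropriate weight $\langle\lambda_\xi\rangle^s$ to cover the modes with $|\lambda_\xi|$ small, using the shifted operator $|\mathcal{H}_0|+c$ when necessary), observing that the weighted energy $|\lambda_\xi|^s E_\xi(t)$ simultaneously controls $|\lambda_\xi|^{s+1}|\widehat{v}|^2$ (via $a(t)\geq a_0$) and $|\lambda_\xi|^s|\partial_t\widehat{v}|^2$. Summing over $\xi\in\mathcal{I}$ and invoking the Plancherel-type identification of $\|\mathcal{H}^{r/2}f\|_{L^2}$ with the weighted sequence norm of its Fourier coefficients from the Ruzhansky--Tokmagambetov calculus yields \eqref{classical sol estimate ch5}. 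Uniqueness follows from the same estimate applied to the difference of two solutions with zero data and zero source; the regularity $v\in C([0,T];H_{\mathcal H}^{1+s})\cap C^1([0,T];H_{\mathcal H}^s)$ is a standard consequence of the continuity in $t$ of each Fourier coefficient combined with dominated convergence against the Gronwall bound.

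The main obstacle I anticipate is not the energy estimate itself, which is standard, but the bookkeeping needed to pass between the operator-theoretic Sobolev norm $\|\mathcal{H}^{s/2}f\|_{L^2}$ and the weighted $\ell^2$-norm on Fourier coefficients when the eigen-basis is only biorthogonal (not orthonormal), together with the subtlety introduced by possibly vanishing $\lambda_\xi$; the former is handled by the Plancherel/Parseval statements built into the non-harmonic framework, while the latter is harmlessly absorbed by the additive $+1$ in the definition of $E_\xi(t)$ (or by working with $|\mathcal{H}_0|+c$ as the authors suggest).
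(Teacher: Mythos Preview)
Your proof is correct and follows essentially the same strategy as the paper: decouple via the $\mathcal{H}$-Fourier transform, run an energy/Gronwall argument on each mode with constants independent of~$\xi$, then multiply by $\langle\xi\rangle^{2s}$ and sum using the Plancherel identity of the non-harmonic calculus. The only cosmetic difference is that the paper first rewrites the second-order scalar ODE as a $2\times 2$ first-order system and uses the symmetriser $S(t)=\operatorname{diag}(a(t),1)$ to define the energy $(S(t)V,V)=a(t)\langle\xi\rangle^{2}|\widehat{v}|^{2}+|\partial_{t}\widehat{v}|^{2}$, whereas you work directly with the equivalent scalar energy $E_\xi(t)=(a(t)|\lambda_\xi|+1)|\widehat{v}|^{2}+|\partial_{t}\widehat{v}|^{2}$; the matrix framing is chosen in the paper mainly for uniformity with the quasi-symmetriser arguments needed in Cases~2--4.
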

 We know that, in the case of $\mathbb{R}^n$, when we deal with PDE's, the well-posedness in the spaces of smooth functions or distributions fails in cases when $a$ is H\"{o}lder continuous or $a$ is positive but need not be away from zero. For instance, it is feasible to obtain smooth Cauchy data such that Cauchy problem \eqref{cauchy problem ch5} does not possess solutions in distribution spaces, or if a solution does exist, it may not be unique. For construction of such examples, one can refer to \cite{colombini1987,colombini1982}. Thus, Gevrey spaces naturally arise in such setting. For real $s\geq 1$, we define Gevrey spaces $\gamma_{\mathcal{H}}^{s}$  in our setting formally in Section \ref{spectrum and fourier analysis ch5}. Now, we will examine a situation where the propagation speed, $a(t)\geq a_{0}>0$, is H\"{o}lder continuous of order $\alpha,$ with $0<\alpha<1.$ Note that, for $\alpha>0$, we say that $a$ is H\"{o}lder continuous of order $\alpha,$ i.e., $a \in \mathcal{C}^\alpha([0, T])$ if for some positive constant $M_a$, we have
$$
|a(t)-a(s)| \leq M_a|t-s|^\alpha,
$$
for all $t, s \in[0, T]$. We denote $\|a\|_{\mathcal{C}^\alpha([0, T])}$ as the smallest $M_a$ in the above inequality.
 \begin{theorem}\textbf{(Case 2)}\label{classical sol case2 ch5}
 Let $T>0$ and $s \in \mathbb{R}$. Assume that  $a \in \mathcal{C}^\alpha([0, T])$, with $0<\alpha<1,$ be such that $\underset{t \in [o,T]}{\inf} a(t)=a_{0}>0$ and $q \in L^{\infty}([0, T])$. Let $f \in C\left([0, T] ; \gamma_{\mathcal{H}}^{s}\right)$. If the initial Cauchy data  $v_{0}, v_{1} \in \gamma_{\mathcal{H}}^{s}$, then the Cauchy problem \eqref{cauchy problem ch5} has a unique solution $v \in C^2\left([0, T] ; \gamma_{\mathcal{H}}^{s}\right)$ provided that
 $$1\leq s < 1+ \frac{\alpha}{1-\alpha}.$$
	\end{theorem}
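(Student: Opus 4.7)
The strategy follows the classical Kinoshita--Spagnolo energy approach, transplanted to the non-harmonic setting of Ruzhansky--Tokmagambetov \cite{Ruz16}. Using the bi-orthogonal eigenfunction basis of $\mathcal H$ on $L^2(\mathbb R^n)$, I would expand $v(t,x) = \sum_{\xi} \hat v(t,\xi)\,u_\xi(x)$ and reduce \eqref{cauchy problem ch5} to the decoupled family of ODEs
\begin{equation*}
\partial_t^2 \hat v(t,\xi) + a(t)|\lambda_\xi|\hat v(t,\xi) + q(t)\hat v(t,\xi) = \hat f(t,\xi),\qquad \xi \in \mathcal I,
\end{equation*}
with initial data $\hat v_0(\xi)$, $\hat v_1(\xi)$. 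The plan is to prove an $\xi$-pointwise estimate carrying an exponential factor $\exp\!\bigl(K t|\lambda_\xi|^{(1-\alpha)/2}\bigr)$ and then resum against the Gevrey weight.

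Since $a$ is only H\"older continuous of order $\alpha$, the naive energy $a(t)|\lambda_\xi||\hat v|^2 + |\partial_t\hat v|^2$ is not differentiable in $t$. I would therefore regularize $a$ at a frequency-dependent scale $\varepsilon=\varepsilon(\xi)=|\lambda_\xi|^{-1/2}$ by convolution, $a_\varepsilon=a*\psi_\varepsilon$, and use the standard mollifier estimates
\begin{equation*}
\|a-a_\varepsilon\|_{L^\infty}\leq C\|a\|_{\mathcal C^\alpha}\varepsilon^\alpha,\qquad \|\partial_t a_\varepsilon\|_{L^\infty}\leq C\|a\|_{\mathcal C^\alpha}\varepsilon^{\alpha-1}.
\end{equation*}
With the approximate energy
\begin{equation*}
E(t,\xi) = a_\varepsilon(t)|\lambda_\xi||\hat v(t,\xi)|^2 + |\partial_t\hat v(t,\xi)|^2 + |\hat v(t,\xi)|^2,
\end{equation*}
differentiation in $t$ produces two error terms, one of size $|\partial_t a_\varepsilon|/a_\varepsilon\lesssim\varepsilon^{\alpha-1}$ and one of size $(a-a_\varepsilon)\sqrt{|\lambda_\xi|}/\sqrt{a_\varepsilon}\lesssim\varepsilon^\alpha\sqrt{|\lambda_\xi|}$. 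The choice $\varepsilon=|\lambda_\xi|^{-1/2}$ balances them, both becoming $\lesssim|\lambda_\xi|^{(1-\alpha)/2}$; the lower bound $a_0>0$ keeps $a_\varepsilon$ away from zero so that the ratio is meaningful. Gronwall's inequality then yields
\begin{equation*}
E(t,\xi) \leq C\exp\!\bigl(Kt|\lambda_\xi|^{(1-\alpha)/2}\bigr)\left(E(0,\xi) + \int_0^t|\hat f(s,\xi)|^2\,ds\right).
\end{equation*}

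To pass from this pointwise bound to Gevrey regularity, I would invoke the non-harmonic characterization of $\gamma^s_{\mathcal H}$ recalled in Section \ref{spectrum and fourier analysis ch5}: $g\in\gamma^s_{\mathcal H}$ iff $|\hat g(\xi)|\leq C e^{-c|\lambda_\xi|^{1/(2s)}}$ for some $c>0$. Inserting the initial-data and source bounds into the Gronwall estimate and summing $\sum_\xi$ against the Gevrey weight, the resulting series converges provided
\begin{equation*}
\frac{1-\alpha}{2} < \frac{1}{2s},\qquad\text{i.e.,}\qquad s < \frac{1}{1-\alpha} = 1 + \frac{\alpha}{1-\alpha},
\end{equation*}
which is exactly the hypothesis of the theorem. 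This produces $v\in C^2([0,T];\gamma^s_{\mathcal H})$, and uniqueness follows by applying the same energy inequality to the difference of two solutions.

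The principal obstacle is the sharp balancing inside the energy inequality: one must select $\varepsilon(\xi)$ so that both mollification errors contribute the same order, which forces $\varepsilon=|\lambda_\xi|^{-1/2}$ and extracts precisely the exponent $(1-\alpha)/2$ that dictates the Gevrey upper bound. A secondary technical point is the bookkeeping in the resummation: one needs a strict margin between the Gronwall exponent and the Gevrey exponent, choosing a slightly smaller constant $c_0<c$ in the Gevrey weight so as to absorb the factor $e^{KT|\lambda_\xi|^{(1-\alpha)/2}}$ uniformly in $\xi$ and recover an $L^2$ series.
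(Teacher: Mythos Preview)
Your argument is correct and leads to the same Gevrey threshold $s<1/(1-\alpha)$, but it is not the route taken in the paper. The paper follows the Colombini--Kinoshita diagonalization scheme: it passes to the first-order system \eqref{equivalent cauchy problem ch5}, introduces the matrix $H(t)$ built from the mollified \emph{square root} $\lambda^\varepsilon=\sqrt{a}*\psi_\varepsilon$, writes $V=e^{-\rho(t)\langle\xi\rangle^{1/s}}(\det H)^{-1}HW$, and shows that $\partial_t|W|^2\le 0$ once $\varepsilon=\langle\xi\rangle^{-1}$ and $\rho(t)=-kt$ with $k$ large enough. Your approach is the more direct Colombini--De~Giorgi--Spagnolo energy method: you mollify $a$ itself, put the mollified coefficient into a scalar second-order energy, and balance the two errors $\varepsilon^{\alpha-1}$ and $\varepsilon^\alpha|\lambda_\xi|^{1/2}$ at $\varepsilon=|\lambda_\xi|^{-1/2}$ (which coincides with the paper's $\langle\xi\rangle^{-1}$ up to harmless constants). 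Both yield the same Gronwall exponent $|\lambda_\xi|^{(1-\alpha)/2}\sim\langle\xi\rangle^{1-\alpha}$.

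What each buys: your scalar-energy argument is shorter and entirely self-contained for a second-order equation with strictly positive $a$; the paper's diagonalization, while heavier here, is the template that scales to higher-order equations and systems (which is the setting of \cite{colombini kinoshita}) and dovetails with the quasi-symmetriser machinery used in Cases~3 and~4. One small bookkeeping remark: for $\xi$ with $\lambda_\xi=0$ you should use $\langle\xi\rangle^{-1}=(1+|\lambda_\xi|)^{-1/2}$ rather than $|\lambda_\xi|^{-1/2}$, as the paper does, so that the mollification scale is always well defined.
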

 Now, we will examine a situation where the propagation speed $a(t)$, can potentially reach 0 but remains consistent, i.e., $a \in \mathcal{C}^l([0, T])$, with $l\geq2.$ 
 \begin{theorem}\textbf{(Case 3)}\label{classical sol case3 ch5}
 Let $T>0$ and $s \in \mathbb{R}$. Assume that $a \in \mathcal{C}^l([0, T])$ with $l\geq2,$  such that $a(t)\geq0$ and $q \in L^{\infty}([0, T])$. Let $f \in C\left([0, T] ; \gamma_{\mathcal{H}}^{s}\right)$. If the initial Cauchy data $v_{0}, v_{1} \in \gamma_{\mathcal{H}}^{s}$, then the Cauchy problem \eqref{cauchy problem ch5} has a unique solution $v \in C^2\left([0, T] ; \gamma_{\mathcal{H}}^{s}\right)$ provided that
 $$1\leq s < 1+ \frac{l}{2}.$$
	\end{theorem}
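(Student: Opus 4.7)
The plan is to follow the Colombini--De Giorgi--Spagnolo energy method adapted to the non-harmonic framework of Ruzhansky--Tokmagambetov, so I first pass to the $\mathcal{H}$-Fourier side. Writing $V_\xi(t):=\widehat{v}(t,\xi)$ and $F_\xi(t):=\widehat{f}(t,\xi)$ against the biorthogonal basis of eigenfunctions of $\mathcal{H}$, the Cauchy problem decouples into the family of scalar ODEs
\begin{equation*}
\partial_t^2 V_\xi(t)+a(t)|\lambda_\xi|V_\xi(t)+q(t)V_\xi(t)=F_\xi(t),\qquad V_\xi(0)=\widehat{v_0}(\xi),\ \partial_t V_\xi(0)=\widehat{v_1}(\xi),
\end{equation*}
indexed by $\xi\in\mathcal{I}$. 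The Gevrey regularity of $v$ is, by the characterization recalled in Section~\ref{spectrum and fourier analysis ch5}, equivalent to sub-exponential decay of $V_\xi(t)$ of the form $\exp(-c|\lambda_\xi|^{1/(2s)})$, uniformly in $t\in[0,T]$. So the whole task reduces to obtaining such bounds on $V_\xi$ and $\partial_t V_\xi$ from the ODE.

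Next, because $a(t)$ may vanish, the naive energy $|\partial_t V_\xi|^2+a(t)|\lambda_\xi||V_\xi|^2$ is degenerate. Following the standard device I would introduce a regularized energy
\begin{equation*}
E_\xi(t):=|\partial_t V_\xi(t)|^2+\bigl(a(t)|\lambda_\xi|+\varepsilon_\xi\bigr)|V_\xi(t)|^2,
\end{equation*}
with a parameter $\varepsilon_\xi>0$ to be chosen as a negative power of $|\lambda_\xi|$. Differentiating and using the ODE to eliminate $\partial_t^2V_\xi$, the bad term is
\begin{equation*}
a'(t)|\lambda_\xi||V_\xi(t)|^2,
\end{equation*}
while the $q$ and $F_\xi$ contributions are controlled by $\|q\|_\infty E_\xi$ and by $|F_\xi|\,E_\xi^{1/2}/\sqrt{\varepsilon_\xi}$ via Cauchy--Schwarz. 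The core of the argument is the estimate
\begin{equation*}
\frac{|a'(t)|\,|\lambda_\xi|}{a(t)|\lambda_\xi|+\varepsilon_\xi}\ \text{integrated over }[0,T],
\end{equation*}
which must be bounded by a quantity compatible with Gevrey $s<1+l/2$.

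For this, I would invoke the Glaeser--Malgrange/Tataru type inequality for non-negative $C^l$ functions: if $a\in C^l([0,T])$ and $a\geq0$ then $|a'(t)|\leq C_a\,a(t)^{(l-1)/l}$ at points where $a$ is small and $|a'|$ is bounded elsewhere. Combined with the elementary inequality $x^{(l-1)/l}\leq (x+\varepsilon)^{(l-1)/l}$ and an interpolation argument, this yields
\begin{equation*}
\int_0^T\frac{|a'(t)|}{a(t)+\varepsilon_\xi/|\lambda_\xi|}\,dt\ \lesssim\ \bigl(\varepsilon_\xi/|\lambda_\xi|\bigr)^{-1/l}.
\end{equation*}
Choosing $\varepsilon_\xi=|\lambda_\xi|^{1-2/l}$ (or equivalently $\varepsilon_\xi/|\lambda_\xi|=|\lambda_\xi|^{-2/l}$) and applying Gronwall's inequality gives
\begin{equation*}
E_\xi(t)\leq C\exp\!\bigl(K|\lambda_\xi|^{2/(l\cdot 2)}\bigr)\bigl(E_\xi(0)+\|F_\xi\|_{L^\infty([0,T])}^2/\varepsilon_\xi\bigr),
\end{equation*}
so that $V_\xi$ decays like $\exp(-c|\lambda_\xi|^{1/(2+l)})$ provided the initial data and source decay like $\exp(-c|\lambda_\xi|^{1/(2s)})$ with $s<1+l/2$.

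The main obstacle is exactly the Glaeser-type bound and its quantitative use: one must verify that the $C^l$ regularity of $a$, together with non-negativity, really controls $\int |a'|/(a+\varepsilon)\,dt$ by the sharp power of $\varepsilon^{-1/l}$, and that the bookkeeping between $\varepsilon_\xi$, $|\lambda_\xi|$, and the Gevrey index closes with the critical value $s=1+l/2$ excluded and every $s<1+l/2$ admitted. Once this estimate is established, the remainder---summing the Fourier bounds, recovering $v\in C^2([0,T];\gamma_{\mathcal H}^s)$, and uniqueness via the same energy applied to a difference of solutions---is routine within the non-harmonic calculus.
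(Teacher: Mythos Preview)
Your overall strategy---pass to the Fourier side, study the scalar ODEs, and use a regularised energy $E_\xi=|\partial_t V_\xi|^2+(a|\lambda_\xi|+\varepsilon_\xi)|V_\xi|^2$---is exactly what the paper does (there written in $2\times2$ matrix form via the quasi-symmetriser $P_\varepsilon$), so the architecture is right. But two concrete points would derail the argument as written.

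First, the pointwise Glaeser-type bound $|a'(t)|\leq C_a\,a(t)^{(l-1)/l}$ is \emph{false} for $l>2$. Take $a(t)=t^2\in C^l$ for every $l$: then $|a'(t)|=2|t|=2a(t)^{1/2}$, while $a(t)^{(l-1)/l}=|t|^{2(l-1)/l}=o(|t|)$ as $t\to 0$ whenever $l>2$. Glaeser's inequality never gives an exponent better than $1/2$, regardless of how smooth $a$ is. The integral estimate you actually need, $\int_0^T|a'|/(a+\delta)\,dt\leq C\delta^{-1/l}$, \emph{is} correct---this is the Colombini--De~Giorgi--Spagnolo lemma---but its proof does not pass through any pointwise Glaeser inequality; it requires the genuinely different CDGS argument, which the paper imports as estimate \eqref{integral estimate case3 ch5} from Garetto--Ruzhansky.

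Second, in differentiating $E_\xi$ you have dropped a term: using the ODE, the derivative contains, besides $a'|\lambda_\xi||V_\xi|^2$, an extra cross term $2\varepsilon_\xi\operatorname{Re}(\overline{V_\xi}\,\partial_t V_\xi)$, which after Cauchy--Schwarz contributes $\sqrt{\varepsilon_\xi}\,E_\xi$ to the Gronwall bound. In the paper's matrix language this is the $\varepsilon\langle\xi\rangle$ term coming from $P_\varepsilon A-A^*P_\varepsilon$, see \eqref{PA-AP norm case3 ch5}. This is precisely the term against which $(\varepsilon_\xi/|\lambda_\xi|)^{-1/l}$ must be balanced. Equating $\sqrt{\varepsilon_\xi}=(\varepsilon_\xi/|\lambda_\xi|)^{-1/l}$ gives $\varepsilon_\xi=|\lambda_\xi|^{2/(l+2)}$ and a Gronwall exponent $|\lambda_\xi|^{1/(l+2)}=\langle\xi\rangle^{1/\sigma}$ with $\sigma=1+l/2$, which is the threshold in the statement. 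Your choice $\varepsilon_\xi=|\lambda_\xi|^{1-2/l}$ and the resulting exponent $|\lambda_\xi|^{1/l}$ are off; for $l=2$ they would force $s<1$, contradicting $s\geq 1$, and for larger $l$ they give a strictly weaker range than $s<1+l/2$.
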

 Finally, we examine the case that is complementary to the one described in Theorem \ref{classical sol case3 ch5}. In this instance, the propagation speed $a(t)$ has the possibility of reaching zero and is not as smooth or regular, i.e., $a \in \mathcal{C}^\alpha([0, T])$ with $0<\alpha<2.$
 \begin{theorem}\textbf{(Case 4)}\label{classical sol case4 ch5}
 Let $T>0$ and $s \in \mathbb{R}$. Assume that $a \in \mathcal{C}^\alpha([0, T])$ with $0<\alpha<2,$   such that $a(t)\geq0$ and $q \in L^{\infty}([0, T])$. Let $f \in C\left([0, T] ; \gamma_{\mathcal{H}}^{s}\right)$. If the initial Cauchy data $v_{0}, v_{1} \in \gamma_{\mathcal{H}}^{s}$, then the Cauchy problem \eqref{cauchy problem ch5} has a unique solution $v \in C^2\left([0, T] ; \gamma_{\mathcal{H}}^{s}\right)$ provided that
 $$1\leq s < 1+ \frac{\alpha}{2}.$$
	\end{theorem}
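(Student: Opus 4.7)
Following the strategy used for Cases 1--3, I would expand $v$ in the bi-orthogonal eigenbasis of $\mathcal{H}$ given by Theorem~\ref{discrte spectrum result ch5}, writing $v(t,x) = \sum_{\xi \in \mathcal{I}} \hat v(t,\xi)\,e_\xi(x)$. Taking the $\mathcal{H}$-Fourier transform of \eqref{cauchy problem ch5} decouples it into the family of scalar Cauchy problems
\begin{equation*}
\partial_t^2 \hat v(t,\xi) + a(t)\,|\lambda_\xi|\,\hat v(t,\xi) + q(t)\,\hat v(t,\xi) = \hat f(t,\xi),
\end{equation*}
with data $\hat v_0(\xi),\hat v_1(\xi)$; set $\mu:=|\lambda_\xi|$. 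The aim is to obtain, uniformly in $\xi$, a bound of the form $|\hat v(t,\xi)|^2 + |\partial_t\hat v(t,\xi)|^2 \lesssim \exp\bigl(CT\mu^{1/(2+\alpha)}\bigr)\bigl(\mu\,|\hat v_0(\xi)|^2 + |\hat v_1(\xi)|^2 + \|\hat f(\cdot,\xi)\|_{L^2([0,T])}^2\bigr)$; the rate $\mu^{1/(2+\alpha)}$ is exactly what a Gevrey-$s$ weight compensates when $s < 1 + \alpha/2$.

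\textbf{Regularization and the cushioned energy.} Since $a\in\mathcal{C}^\alpha$ may vanish, neither the multiplicative energy $|\hat v'|^2 + a\mu|\hat v|^2$ of the non-degenerate case (Case~2) nor the Gronwall argument of the $\mathcal{C}^l$ case (Case~3) applies. Following Colombini--Jannelli--Spagnolo, I would regularize $a$ by convolving with a nonnegative mollifier of scale $\epsilon$, extending $a$ outside $[0,T]$ by reflection, producing $a_\epsilon\ge 0$ with
\begin{equation*}
|a(t) - a_\epsilon(t)| \leq C\epsilon^\alpha,\qquad |a_\epsilon^{(k)}(t)| \leq C\epsilon^{\alpha-k}\quad(k\geq 1),
\end{equation*}
with $C$ depending only on $\|a\|_{\mathcal{C}^\alpha}$. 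I would then work with the $\epsilon$-cushioned energy
\begin{equation*}
E_\epsilon(t,\xi) := |\partial_t\hat v(t,\xi)|^2 + \bigl(a_\epsilon(t) + \epsilon^\alpha\bigr)\mu\,|\hat v(t,\xi)|^2 + |\hat v(t,\xi)|^2,
\end{equation*}
whose lower bound $\epsilon^\alpha\mu|\hat v|^2$ is the key device for handling possible degeneracy of $a$.

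\textbf{Balancing and Gevrey assembly.} Differentiating $E_\epsilon$ and substituting the ODE, two dangerous terms survive: the mismatch $2(a_\epsilon - a + \epsilon^\alpha)\mu\,\Re(\hat v\,\overline{\partial_t\hat v})$, bounded via Cauchy--Schwarz and the cushion by $C\sqrt{\epsilon^\alpha\mu}\,E_\epsilon$, and the cushion derivative $a_\epsilon'\mu|\hat v|^2$, which, using $\mu|\hat v|^2 \leq \epsilon^{-\alpha}E_\epsilon$, is bounded by $C\epsilon^{-1}E_\epsilon$. The remaining (lower-order and forcing) contributions yield $CE_\epsilon + |\hat f(t,\xi)|^2$, so that
\begin{equation*}
\partial_t E_\epsilon(t,\xi) \leq C\bigl(1 + \sqrt{\epsilon^\alpha\mu} + \epsilon^{-1}\bigr)E_\epsilon(t,\xi) + |\hat f(t,\xi)|^2.
\end{equation*}
The choice $\epsilon = \mu^{-1/(2+\alpha)}$ equalizes the two $\mu$-dependent rates and, via Gronwall, produces the $\exp(CT\mu^{1/(2+\alpha)})$ control on $E_\epsilon$. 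Combining $|\hat v(t,\xi)|^2 \leq E_\epsilon(t,\xi)$ with the Gevrey-$s$ decay of the data and using $1/(2+\alpha) < 1/(2s) \iff s < 1+\alpha/2$ gives $v(t,\cdot)\in\gamma^s_{\mathcal{H}}$ uniformly in $t\in[0,T]$; the analogous bound on $\partial_t\hat v$, together with the ODE itself for $\partial_t^2\hat v$, upgrades this to $v\in C^2([0,T];\gamma^s_{\mathcal{H}})$. Uniqueness follows from the same energy inequality applied to the difference of two solutions. The main obstacle is precisely the simultaneous failure of regularity and of strict positivity of $a$: it is the $\epsilon$-cushion together with the optimal scaling $\epsilon = \mu^{-1/(2+\alpha)}$ that produces the sharp Gevrey threshold $1+\alpha/2$, and any cruder energy fails to achieve the full range.
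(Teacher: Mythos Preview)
Your argument is correct and yields the right Gevrey threshold, but it proceeds along a genuinely different line from the paper. The paper's own proof is a one-line reduction: it observes that if $a\ge 0$ and $a\in\mathcal{C}^\alpha([0,T])$ with $0<\alpha<2$, then $\sqrt{a}\in\mathcal{C}^{\alpha/2}([0,T])$, and then invokes the Colombini--Kinoshita diagonalisation machinery of Case~2 (the matrix $H(t)$ built from the regularised root $\lambda^\varepsilon=\sqrt{a}*\psi_\varepsilon$) with the H\"older exponent $\alpha$ replaced by $\alpha/2$. You instead run the Colombini--Jannelli--Spagnolo \emph{cushioned energy} method directly on the scalar ODE: regularise $a$ to $a_\epsilon$, work with $E_\epsilon=|\partial_t\hat v|^2+(a_\epsilon+\epsilon^\alpha)\mu|\hat v|^2+|\hat v|^2$, and balance the two bad rates $\sqrt{\epsilon^\alpha\mu}$ and $\epsilon^{-1}$ via $\epsilon=\mu^{-1/(2+\alpha)}$. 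Both routes arrive at growth $\exp\bigl(C\langle\xi\rangle^{2/(2+\alpha)}\bigr)$ and hence at $s<1+\alpha/2$. The paper's approach is more economical in that it recycles Case~2 verbatim, though it silently skips the point that Case~2 used $a\ge a_0>0$ (needed for $\det H=2\lambda^\varepsilon\ge 2\sqrt{a_0}$), so a small additional perturbation of the root is implicitly required there; your cushion $\epsilon^\alpha$ handles the degeneracy explicitly and makes the argument self-contained.
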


\section{Preliminaries: Non harmonic analysis}\label{spectrum and fourier analysis ch5}
In this section, our first aim is to find condition on the complex potential $V$ such that the operator $\mathcal{H}_{0}$ has purely discrete spectrum. We begin with the following definition:
\begin{defi}\label{relative compact defi ch5}
	 Let $X$ be a Hilbert space. Let $T: X \rightarrow X$ be a linear operator with dense domain $\mathcal{D}\left(T\right)$ in $X$ and the resolvent of $T, \rho\left(T\right)$ be non-empty. We say that a linear operator $A: X \rightarrow X$ is relatively compact with respect to $T$ or $T$-compact if $\mathcal{D}\left(T\right) \subset$ $\mathcal{D}(A)$ and $A R\left(z, T\right): X \rightarrow X$ is a compact operator for some $z \in \rho\left(T\right)$, where $R\left(z, T\right)= (z-T)^{-1}.$
\end{defi}
Let $\mathcal{H} := |\mathcal{H}_{0}|$ the operator defined with the set of eigenvalues $\{|\lambda_{\xi}|; \lambda_{\xi}~ \text{is an eigenvalue of}~ \mathcal{H}_{0}\}.$
\begin{theorem}\label{discrte spectrum result ch5}

 Let $\operatorname{Re}(V)\geq 0$ and $\operatorname{Re}(V)(x) \rightarrow \infty$ as $|x| \rightarrow \infty$. Also, assume that $i\operatorname{Im}(V)$ is relatively compact with respect to $\mathcal{H}_{00}$, where $\mathcal{H}_{00}=-\Delta + \operatorname{Re}(V)$. Then the operator $\mathcal{H}_{0}$ has a purely discrete spectrum.
\end{theorem}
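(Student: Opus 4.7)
The plan is to decompose $\mathcal{H}_0$ as $\mathcal{H}_0 = \mathcal{H}_{00} + i\operatorname{Im}(V)$, first establish purely discrete spectrum for the self-adjoint piece $\mathcal{H}_{00}$, and then transfer this property to $\mathcal{H}_0$ using Schechter's version of Weyl's theorem under the relative compactness hypothesis on $i\operatorname{Im}(V)$.

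First I would show that $\mathcal{H}_{00} = -\Delta + \operatorname{Re}(V)$ is self-adjoint (on its natural domain obtained via the quadratic form method, since $\operatorname{Re}(V) \geq 0$) and has purely discrete spectrum. The classical route is to prove that the resolvent $(\mathcal{H}_{00} - z)^{-1}$ is compact on $L^2(\mathbb{R}^n)$ for some (equivalently, any) $z \in \rho(\mathcal{H}_{00})$. The form domain is contained in the weighted Sobolev-type space $\{u \in H^1(\mathbb{R}^n) : \operatorname{Re}(V)^{1/2} u \in L^2(\mathbb{R}^n)\}$, and the hypothesis $\operatorname{Re}(V)(x) \to \infty$ as $|x|\to\infty$ lets one combine the Rellich compact embedding on bounded balls with a tail control: on $\{|x|>R\}$ the weight $\operatorname{Re}(V)^{-1}$ can be made arbitrarily small, which produces compactness of the embedding of the form domain into $L^2$ and hence of the resolvent. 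This is the standard confining-potential argument (see e.g.\ \cite{gustafson}), and it yields $\sigma_{\mathrm{ess}}(\mathcal{H}_{00}) = \emptyset$, so the spectrum of $\mathcal{H}_{00}$ consists of isolated eigenvalues of finite multiplicity accumulating only at $+\infty$.

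Next I would invoke Schechter's theorem \cite{Schechter} for the non-self-adjoint perturbation. By hypothesis $i\operatorname{Im}(V)$ is $\mathcal{H}_{00}$-compact in the sense of Definition \ref{relative compact defi ch5}, i.e.\ $\mathcal{D}(\mathcal{H}_{00}) \subset \mathcal{D}(i\operatorname{Im}(V))$ and $i\operatorname{Im}(V)\,(z - \mathcal{H}_{00})^{-1}$ is compact for some $z$ in the non-empty resolvent set $\rho(\mathcal{H}_{00})$. Schechter's version of Weyl's theorem for closed densely defined operators then gives
\begin{equation*}
\sigma_{\mathrm{ess}}(\mathcal{H}_0) \;=\; \sigma_{\mathrm{ess}}(\mathcal{H}_{00} + i\operatorname{Im}(V)) \;=\; \sigma_{\mathrm{ess}}(\mathcal{H}_{00}) \;=\; \emptyset .
\end{equation*}
Before applying it, I would verify that $\mathcal{H}_0$ is a closed operator on the domain $\mathcal{D}(\mathcal{H}_{00})$: this follows from the closedness of $\mathcal{H}_{00}$ together with the fact that $i\operatorname{Im}(V)$ is relatively bounded with relative bound $0$ (a consequence of relative compactness), so a Kato-Rellich type argument guarantees $\mathcal{H}_0$ is closed.

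Finally, since $\sigma_{\mathrm{ess}}(\mathcal{H}_0) = \emptyset$, the spectrum $\sigma(\mathcal{H}_0)$ consists entirely of isolated eigenvalues of finite algebraic multiplicity, which is exactly the statement that $\mathcal{H}_0$ has purely discrete spectrum. The main obstacle is the compactness of the resolvent of $\mathcal{H}_{00}$ in step one: one has to handle the absence of a gap by reducing to the embedding of the form domain into $L^2$ via the confining property of $\operatorname{Re}(V)$. Everything else is a clean invocation of Schechter's theorem once the $\mathcal{H}_{00}$-compactness of $i\operatorname{Im}(V)$ is translated into compactness of the product $i\operatorname{Im}(V)\,R(z,\mathcal{H}_{00})$, which is precisely the form in which Schechter's hypothesis is stated.
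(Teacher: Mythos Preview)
Your proposal is correct and follows exactly the same route as the paper: cite the standard confining-potential result (Gustafson--Sigal) to get $\sigma_{\mathrm{ess}}(\mathcal{H}_{00})=\emptyset$, then apply Schechter's Weyl-type theorem to conclude $\sigma_{\mathrm{ess}}(\mathcal{H}_0)=\sigma_{\mathrm{ess}}(\mathcal{H}_{00})=\emptyset$. The paper's proof is in fact terser than yours, simply invoking the two references without the additional detail you supply on form domains, compact embeddings, and closedness of $\mathcal{H}_0$.
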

\begin{proof}
    By \cite[Theorem 7.5]{gustafson}, we can conclude that the operator $\mathcal{H}_{00}=-\Delta + \operatorname{Re}(V)$ is a self adjoint operator and it has a purely discrete spectrum, i.e., $ess(\mathcal{H}_{00})=\emptyset$. Since, $i\operatorname{Im}(V)$ is relatively compact with respect to $\mathcal{H}_{00}$,  so by using \cite[Theorem 2.1]{Schechter}, we have that $ess(\mathcal{H}_{0})=ess(\mathcal{H}_{00})$. Hence, $\mathcal{H}_{0}$ has a purely discrete spectrum.
\end{proof}

Let $\mathcal{I}$ be an ordered countable set. We consider the spectrum $\{|\lambda_\xi|: \xi \in \mathcal{I}\}$  of $\mathcal{H}$ with corresponding eigenfunctions in $L^2(\mathbb{R}^n)$ denoted by $u_\xi$, i.e., $$\mathcal{H}u_\xi=\lambda_\xi u_\xi~\text{ in $\mathbb{R}^n$, for all $\xi \in \mathcal{I}.$}~$$ The conjugate spectral problem is
	$$\mathcal{H}^*v_\xi=\overline{\lambda_\xi} v_\xi~\text{ in $\mathbb{R}^n$, for all $\xi \in \mathcal{I}$}.$$
	Also, we assume that the system $\{u_\xi\}_{\xi \in \mathcal{I}}$ is a Riesz basis for $L^2(\mathbb{R}^n)$.  Here we denotes $$\langle \xi \rangle:=(1+|\lambda_\xi|)^{\frac{1}{2}}.$$

\begin{defi}\label{test function defi ch5}
	The space of test functions for the operator $\mathcal{H}$, i.e., $C_{\mathcal{H}}^{\infty}(\mathbb{R}^n):=$ $\operatorname{Dom}\left(\mathcal{H}^{\infty}\right)$ is defined by 
	$$
	\operatorname{Dom}\left(\mathcal{H}^{\infty}\right):=\bigcap_{k=1}^{\infty} \operatorname{Dom}\left(\mathcal{H}^{k}\right),
	$$
	where $\operatorname{Dom}\left(\mathcal{H}^{k}\right)$ is the domain of $\mathcal{H}^{k}$ which is defined as
	$$
	\operatorname{Dom}\left(\mathcal{H}^{k}\right):=\left\{f \in L^{2}(\mathbb{R}^n): \mathcal{H}^{j} f \in L^{2}(\mathbb{R}^n), j=0,1,2, \ldots, k-1\right\}.
 $$ 
\end{defi} 
The Fréchet topology of $C_{\mathcal{H}}^{\infty}(\mathbb{R}^n)$ is given by the family of norms
$$
\|\varphi\|_{C_{\mathcal{H}}^{k}}:=\max _{j \leq k}\left\|\mathcal{H}^{j} \varphi\right\|_{L^{2}(\mathbb{R}^n)},~~ k \in \mathbb{N}_{0}, ~\varphi \in C_{\mathcal{H} }^{\infty}(\mathbb{R}^n).
$$
Similarly, we introduce the space $C_{\mathcal{H}^{*}}^{\infty}(\mathbb{R}^n)$ corresponding to the adjoint operator $\mathcal{H}_{\mathbb{R}^n}^{*}$ by
$$
C_{\mathcal{H}^{*}}^{\infty}(\mathbb{R}^n):=\operatorname{Dom}\left((\mathcal{H}^{*})^{\infty}\right)=\bigcap_{k=1}^{\infty} \operatorname{Dom}\left(\left(\mathcal{H}^{*}\right)^{k}\right),
$$
where $\operatorname{Dom}\left(\left(\mathcal{H}^{*}\right)^{k}\right)$ is the domain of the operator $\left(\mathcal{H}^{*}\right)^{k}$ given by
$$
\operatorname{Dom}\left(\left(\mathcal{H}^{*}\right)^{k}\right):=\left\{f \in L^{2}(\mathbb{R}^n):\left(\mathcal{H}^{*}\right)^{j} f \in L^{2}(\mathbb{R}^n), j=0,1,2, \ldots, k-1\right\}.
$$
Similarly, the Fréchet topology of $C_{\mathcal{H}^*}^{\infty}(\mathbb{R}^n)$ will given by the family of these norms
$$
\|\varphi\|_{C_{\mathcal{H}^*}^{k}}:=\max _{j \leq k}\left\|\left(\mathcal{H}^{*}\right)^{j} \varphi\right\|_{L^{2}(\mathbb{R}^n)}, ~~k \in \mathbb{N}_{0}, ~\varphi \in C_{\mathcal{H}^* }^{\infty}(\mathbb{R}^n).
$$
\begin{remark}\label{desne remark ch5}
	Since $u_\xi \in C_{\mathcal{H}}^{\infty}(\mathbb{R}^n)$ and $v_\xi \in  C_{\mathcal{H}^{*}}^{\infty}(\mathbb{R}^n),$ for each $\xi \in \mathcal{I}$, hence it can be observed that the spaces $C_{\mathcal{H}}^{\infty}(\mathbb{R}^n)$ and $C_{\mathcal{H}^*}^{\infty}(\mathbb{R}^n)$ are dense in $L^2(\mathbb{R}^n)$.	
\end{remark}
The space of $\mathcal{H}$-distributions (or $\mathcal{H}^*$-distributions) $\mathcal{D}_\mathcal{H}'(\mathbb{R}^n)$ (or $\mathcal{D}_{\mathcal{H}^*}'(\mathbb{R}^n))$ is the space of linear continuous functionals on $C_{\mathcal{H}^{*}}^{\infty}(\mathbb{R}^n)$ (or $C_{\mathcal{H}}^{\infty}(\mathbb{R}^n)$), i.e.,
$\mathcal{D}_\mathcal{H}'(\mathbb{R}^n) :=\mathcal{L}(C_{\mathcal{H}^{*}}^{\infty}(\mathbb{R}^n), \mathbb{C} )$ (or $\mathcal{D}_{\mathcal{H}^*}'(\mathbb{R}^n) :=\mathcal{L}(C_{\mathcal{H}}^{\infty}(\mathbb{R}^n), \mathbb{C} )$). Also, the space $ \mathcal{D}_{\mathcal{H}}^{\prime}(\mathbb{R}^n) \otimes \mathcal{S}^{\prime}(\mathcal{I})$ is the collection of all functions $f:\mathbb{R}^n \times \mathcal{I} \rightarrow \mathbb{C}$ such that $f(x,.) \in \mathcal{S}^{\prime}(\mathcal{I})$ for every $x \in \mathbb{R}^n$ and $f(.,\xi) \in \mathcal{D}_{\mathcal{H}}^{\prime}(\mathbb{R}^n)$ for every $\xi \in \mathcal{I}.$

\subsection{$\mathcal{H}$-Fourier transform}
The definition and some significant characteristics of the $\mathcal{H}$-Fourier transform, are reviewed in this subsection.

Let $\mathcal{S}(\mathcal{I})$ be the space of rapidly decaying functions from $\mathcal{I}$ to $\mathbb{C}$ and $\phi \in \mathcal{S}(\mathcal{I})$, i.e., for every $\ell<\infty,$ there exists a constant $C_{\phi, \ell}$ such that
$$
|\phi(\xi)| \leq C_{\phi, \ell}\langle\xi\rangle^{-\ell}
$$
for all $\xi \in \mathcal{I}$. The topology on $\mathcal{S}(\mathcal{I})$ is given by the seminorms $p_{k}, k \in \mathbb{N}_{0}$, i.e., 
$$
p_{k}(\phi):=\sup _{\xi \in \mathcal{I}}\langle\xi\rangle^{k}|\phi(\xi)|.
$$
Also, the continuous linear functionals on $\mathcal{S}(\mathcal{I})$ are of the form $$\phi \mapsto\langle u, \phi \rangle=\sum_{\xi \in \mathcal{I}} u(\xi) \phi(\xi),$$ where  $u$ is a function from $ \mathcal{I}$ to $\mathbb{C}$ with a property that it grows at most polynomially at infinity, i.e., there exist constants $\ell<\infty$ and $C_{u, \ell}$ such that 
$$
|u(\xi)| \leq C_{u, \ell}\langle\xi\rangle^{\ell}
$$
for all $\xi \in \mathcal{I}$. The collection of such $u: \mathcal{I} \rightarrow \mathbb{C}$ form a distribution space which is denoted by $\mathcal{S}^{\prime}(\mathcal{I})$.
\begin{defi}[Fourier transform]\label{fourier transform defi ch5}
	The $\mathcal{H}$-Fourier transform
	$$
	\left(\mathcal{F}_{\mathcal{H}} f\right)(\xi)=(f \mapsto \widehat{f}): C_{\mathcal{H}}^{\infty}(\mathbb{R}^n) \rightarrow \mathcal{S}(\mathcal{I})
	$$
	is given by
	$$
	\hat{f}(\xi):=\left(\mathcal{F}_{\mathcal{H}} f\right)(\xi)=\int_{\mathbb{R}^n} f(x) \overline{v_{\xi}(x)} ~d x.
	$$
\end{defi}
Similarly, the  $\mathcal{H}^{*}$-Fourier transform 
$$
\left(\mathcal{F}_{\mathcal{H} \cdot^{*}} f\right)(\xi)=\left(f \rightarrow \widehat{f}_{*}\right): C_{\mathcal{H}^{*}}^{\infty}(\mathbb{R}^n) \rightarrow S(\mathcal{I})
$$
is defined by
$$
\hat{f}_*(\xi):=\left(\mathcal{F}_{\mathcal{H}^{*}}  f\right)(\xi)=\int_{\mathbb{R}^n} f(x) \overline{u_{\xi}(x)} ~d x.
$$
\begin{prop}\label{inverse fourier transform defi ch5}
	The $\mathcal{H}$-Fourier transform $\mathcal{F}_{\mathcal{H}}$ is a bijective homeomorphism from $C_{\mathcal{H}}^{\infty}(\mathbb{R}^n)$ into $ \mathcal{S}(\mathcal{I})$. The inverse of $\mathcal{F}_{\mathcal{H}}$,
	$$\mathcal{F}_{\mathcal{H}}^{-1}: \mathcal{S}(\mathcal{I}) \rightarrow C_{\mathcal{H}}^{\infty}(\mathbb{R}^n)$$ is given by
	$$
	\left(\mathcal{F}_{\mathcal{H}}^{-1} h \right)(x)=\sum_{\xi \in \mathcal{I}} h(\xi) u_{\xi}(x),~ h \in \mathcal{S}(\mathcal{I})
	$$
	in order that the Fourier inversion formula is given by
	$$
	f(x)=\sum_{\xi \in \mathcal{I}} \hat{f}(\xi) u_\xi(x),~~ f \in C_{\mathcal{H}}^{\infty}(\mathbb{R}^n).
	$$
\end{prop}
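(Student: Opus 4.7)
The plan is to exploit the intertwining identity between the operator $\mathcal{H}$ on the physical side and multiplication by $\lambda_\xi$ on the frequency side, using the conjugate spectral problem $\mathcal{H}^{*}v_\xi=\overline{\lambda_\xi}v_\xi$. Specifically, for $f \in C_{\mathcal{H}}^{\infty}(\mathbb{R}^n)$ and any $k \in \mathbb{N}_0$, I would integrate by parts (in the duality sense) to obtain
\begin{equation*}
\lambda_\xi^k \hat{f}(\xi) \;=\; \int_{\mathbb{R}^n} f(x)\,\overline{(\mathcal{H}^{*})^k v_\xi(x)}\,dx \;=\; \int_{\mathbb{R}^n} \mathcal{H}^k f(x)\,\overline{v_\xi(x)}\,dx \;=\; \widehat{\mathcal{H}^k f}(\xi).
\end{equation*}
Since the biorthogonal system $\{v_\xi\}$ associated with the Riesz basis $\{u_\xi\}$ is itself a Riesz basis of $L^2(\mathbb{R}^n)$, the coefficients $\widehat{\mathcal{H}^k f}(\xi)$ are uniformly bounded by $C_k \|\mathcal{H}^k f\|_{L^2}$. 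Combined with the trivial bound $|\hat{f}(\xi)| \leq C\|f\|_{L^2}$, this yields $|\hat{f}(\xi)| \leq C_\ell \langle \xi\rangle^{-\ell}$ for every $\ell$, which places $\hat{f}$ in $\mathcal{S}(\mathcal{I})$ and simultaneously gives continuity of $\mathcal{F}_\mathcal{H}$ with respect to the seminorms.

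Next I would address the inverse map. For $h \in \mathcal{S}(\mathcal{I})$, set $g(x) := \sum_{\xi \in \mathcal{I}} h(\xi) u_\xi(x)$. The rapid decay of $h$ together with the Riesz basis bound $\big\|\sum_\xi c_\xi u_\xi\big\|_{L^2}^2 \lesssim \sum_\xi |c_\xi|^2$ implies the series converges in $L^2$; more generally, $\mathcal{H}^k g = \sum_\xi h(\xi) \lambda_\xi^k u_\xi$ converges in $L^2$ for each $k$, hence $g \in C_{\mathcal{H}}^{\infty}(\mathbb{R}^n)$. The biorthogonality relation $\int u_\eta\,\overline{v_\xi}\,dx = \delta_{\xi\eta}$, applied termwise to the $L^2$-convergent series, gives $\mathcal{F}_\mathcal{H} g = h$, so the map proposed in the statement is a right inverse. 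Conversely, for $f \in C_{\mathcal{H}}^{\infty}(\mathbb{R}^n) \subset L^2(\mathbb{R}^n)$, the Riesz basis expansion yields $f = \sum_\xi c_\xi u_\xi$ with $c_\xi = \langle f, v_\xi\rangle = \hat{f}(\xi)$, which is the claimed Fourier inversion formula and shows that the proposed inverse is also a left inverse.

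Finally, continuity of $\mathcal{F}_\mathcal{H}^{-1}: \mathcal{S}(\mathcal{I}) \to C_\mathcal{H}^\infty(\mathbb{R}^n)$ follows because
\begin{equation*}
\bigl\|\mathcal{H}^k g\bigr\|_{L^2}^2 \;\lesssim\; \sum_{\xi \in \mathcal{I}} |\lambda_\xi|^{2k} |h(\xi)|^2 \;\lesssim\; \bigl(p_{k+n}(h)\bigr)^2 \sum_{\xi} \langle\xi\rangle^{-2n},
\end{equation*}
and for $n$ chosen large enough (so that the sum converges, using the polynomial growth of $|\lambda_\xi|$ implicit in the assumption that the spectrum is discrete and a standard Weyl-type counting), the right-hand side is controlled by a seminorm of $h$ in $\mathcal{S}(\mathcal{I})$. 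The main obstacle I anticipate is justifying the pairing $\int f\,\overline{(\mathcal{H}^*)^k v_\xi}\,dx = \int \mathcal{H}^k f\,\overline{v_\xi}\,dx$ cleanly, since $\mathcal{H}_0$ is non-self-adjoint and $v_\xi$ lives in $\mathrm{Dom}((\mathcal{H}^*)^\infty)$ rather than $\mathrm{Dom}(\mathcal{H}^\infty)$; this is handled by the very definition of $\mathcal{H}^*$ on $C_{\mathcal{H}}^\infty(\mathbb{R}^n) \times C_{\mathcal{H}^*}^\infty(\mathbb{R}^n)$, and a secondary subtlety is confirming summability of $\sum_\xi \langle\xi\rangle^{-N}$ for $N$ large, which is where the discreteness of the spectrum of $\mathcal{H}_0$ established in Theorem \ref{discrte spectrum result ch5} enters in an essential way.
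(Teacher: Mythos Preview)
The paper does not give its own proof of this proposition: it is recalled as part of the preliminary non-harmonic analysis framework taken from \cite{Ruz16}, so there is no ``paper's proof'' to compare against. Your argument is essentially the standard one used in that reference, and the main steps (the intertwining $\widehat{\mathcal{H}^k f}(\xi)=\lambda_\xi^{k}\hat f(\xi)$ via the adjoint eigenvalue problem, the Riesz-basis bounds to pass between $L^2$ and $\ell^2$, and the biorthogonality $\langle u_\eta,v_\xi\rangle=\delta_{\xi\eta}$ to verify the two-sided inverse) are correct.

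The only point that deserves more care is the summability $\sum_{\xi}\langle\xi\rangle^{-N}<\infty$ for large $N$. You appeal to ``discreteness of the spectrum and a standard Weyl-type counting,'' but discreteness alone does not imply any growth rate for the eigenvalues; one needs a genuine Weyl asymptotic for $-\Delta+\operatorname{Re}(V)$ (available under the hypotheses $\operatorname{Re}(V)\geq 0$, $\operatorname{Re}(V)\to\infty$) or, as in \cite{Ruz16}, an explicit standing assumption of this type. Once that is in place, your continuity estimate for $\mathcal{F}_{\mathcal{H}}^{-1}$ goes through as written.
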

Similarly,  $\mathcal{H}^*$-Fourier transform $\mathcal{F}_{\mathcal{H}^{*}}: C_{\mathcal{H}^{*}}^{\infty}(\mathbb{R}^n) \rightarrow \mathcal{S}(\mathcal{I})$ is a bijective homeomorphism and its inverse $\mathcal{F}_{\mathcal{H}^{*}}^{-1}: \mathcal{S}(\mathcal{I}) \rightarrow C_{\mathcal{H}^{*}}^{\infty}(\mathbb{R}^n)$ is
given by
$$
\left(\mathcal{F}_{\mathcal{H}^{*}}^{-1} h\right)(x)=\sum_{\xi \in \mathcal{I}} h(\xi) v_{\xi}(x), h \in S(\mathcal{I})
$$
such  that the conjugate Fourier inversion formula is given by
$$
f(x)=\sum_{\xi \in \mathcal{I}} \widehat{f}_{*}(\xi) v_{\xi}(x), f \in C_{\mathcal{H}^{*}}^{\infty}(\mathbb{R}^n).
$$
\subsection{Plancherel formula and Sobolev spaces} 
We revisit the Sobolev spaces generated by the operator $\mathcal{H}$ in this subsection. Additionally, we introduce appropriate sequence spaces $\ell^2(\mathcal{H})$ and $\ell^2(\mathcal{H}^*)$ to obtain the Plancherel identity.

\begin{defi}\label{l^p space defi ch5}
	Let $\ell^2(\mathcal{H})$ be the linear space of complex-valued functions $g$ on  $\mathcal{I}$ such that $\mathcal{F}_{\mathcal{H}}^{-1} g \in L^{2}(\mathbb{R}^n),$ i.e., if there exists $f \in L^{2}(\mathbb{R}^n)$ such that $\mathcal{F}_{\mathcal{H}} f=g.$ Then the space of sequences $\ell^{2}({\mathcal{H}})$ is a Hilbert space with the inner product
	$$
	\langle a, b\rangle_{\ell^{2}({\mathcal{H}})}:=\sum_{\xi \in \mathcal{I}} a(\xi) \overline{\left(\mathcal{F}_{\mathcal{H}^{*}} \circ \mathcal{F}_{\mathcal{H}}^{-1} b\right)(\xi)}
	$$
	for arbitrary $a, b \in \ell^{2}({\mathcal{H}}).$
\end{defi}
Here the norm on $\ell^{2}({\mathcal{H}})$ is given by  
$$
\|a\|_{\ell^{2}({\mathcal{H}})}=\sum_{\xi \in \mathcal{I}} a(\xi) \overline{\left(\mathcal{F}_{\mathcal{H}^{*}} \circ \mathcal{F}_{\mathcal{H}}^{-1} a\right)(\xi)}, ~~\quad a\in \ell^{2}({\mathcal{H}}).
$$
In the similar manner, the Hilbert space  $\ell^2(\mathcal{H}^*)$ be the linear space of complex-valued functions $g$ on  $\mathcal{I}$ such that $\mathcal{F}_{\mathcal{H}^*}^{-1} g \in L^{2}(\mathbb{R}^n),$  with the inner product
$$
\langle a, b\rangle_{\ell^{2}({\mathcal{H}^*})}:=\sum_{\xi \in \mathcal{I}} a(\xi) \overline{\left(\mathcal{F}_{\mathcal{H}} \circ \mathcal{F}_{\mathcal{H}^*}^{-1} b\right)(\xi)}
$$
for arbitrary $a, b \in \ell^2(\mathcal{H}^*)$.
The   sequence spaces $\ell^2(\mathcal{H})$ 
and  $\ell^2(\mathcal{H}^*)$  are thus generated by the biorthogonal
systems $\{u_\xi\}_{\xi \in \mathcal{I}}$ and $\{v_\xi\}_{\xi \in \mathcal{I}}$ respectively. The following Plancherel's identity provides a clear indication of the reasoning behind this specific definition selection.

\begin{prop}[Plancherel's identity]\label{plancherel's identity defi ch5}
	If $f_1, f_2 \in L^{2}(\mathbb{R}^n)$ then $\widehat{f_1}, \widehat{f_2} \in \ell^{2}({\mathcal{H}}), \widehat{f_1}_{*}, \widehat{f_2}_{*} \in \ell^{2}({\mathcal{H}^*})$
	and the inner products  take the form
	$$
	\langle\widehat{f_1}, \widehat{f_2} \rangle_{\ell^{2}({\mathcal{H}})}=\sum_{\xi \in \mathcal{I}} \widehat{f_1}(\xi) \overline{\widehat{f_2}_{*}(\xi)}
	$$
	and
	$$
	\langle\widehat{f_1}_{*}, \widehat{f_2}_{*} \rangle_{\ell^{2}({\mathcal{H}^*})}=\sum_{\xi \in \mathcal{I}} \widehat{f_1}_{*}(\xi) \overline{\widehat{f_2}(\xi)}.
	$$
	Particularly, we have
	$$
	\overline{\langle\widehat{f_1}, \widehat{f_2} \rangle}_{\ell^{2}({\mathcal{H}})}=\langle \widehat{f_2}_{*}, \widehat{f_1}_{*}\rangle_{\ell^{2}({\mathcal{H}^*})}
	$$
	and the Parseval identity takes of the form
	$$
	\langle {f_1},  {f_2} \rangle_{L^2(\mathbb{R}^n)}=\langle\widehat{f_1}, \widehat{f_2} \rangle_{\ell^{2}({\mathcal{H}})}=\sum_{\xi \in \mathcal{I}} \widehat{f_1}(\xi) \overline{\widehat{f_2}_{*}(\xi)}=\sum_{\xi \in \mathcal{I}} \widehat{f_1}_{*}(\xi) \overline{\widehat{f_2}(\xi)}.
	$$
	Also, for any $f \in L^{2}(\mathbb{R}^n),$ we have $\widehat{f} \in \ell^{2}({\mathcal{H}}), \widehat{f}_{*} \in \ell^{2}({\mathcal{H}^*})$ and
	$$
	\|f\|_{L^2(\mathbb{R}^n)}=\|\widehat{f}\|_{\ell^{2}({\mathcal{H}})}=\|\widehat{f}_{*}\|_{\ell^{2}({\mathcal{H}^*})}.
	$$
\end{prop}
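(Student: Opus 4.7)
The strategy is to exploit the biorthogonality of the systems $\{u_\xi\}_{\xi\in\mathcal{I}}$ and $\{v_\xi\}_{\xi\in\mathcal{I}}$ together with the Riesz basis hypothesis. Since $u_\xi$ and $v_\eta$ are eigenfunctions of $\mathcal{H}$ and $\mathcal{H}^*$ corresponding to eigenvalues $\lambda_\xi$ and $\overline{\lambda_\eta}$, the pairing $\langle \mathcal{H}u_\xi, v_\eta\rangle_{L^2} = \langle u_\xi, \mathcal{H}^*v_\eta\rangle_{L^2}$ gives $(\lambda_\xi-\lambda_\eta)\langle u_\xi, v_\eta\rangle_{L^2}=0$, so the two systems are orthogonal across distinct eigenvalues; after a suitable normalization within each generalized eigenspace the Riesz basis property upgrades this to the biorthogonality relation $\langle u_\xi, v_\eta\rangle_{L^2} = \delta_{\xi\eta}$. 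This single identity drives the entire proof.

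Given $f_1, f_2 \in L^2(\mathbb{R}^n)$, I would invoke the Riesz basis property to write the $L^2$-convergent expansions $f_1 = \sum_\xi \widehat{f_1}(\xi) u_\xi$ and $f_2 = \sum_\eta \widehat{f_2}_*(\eta) v_\eta$, in accordance with the two inversion formulas of Proposition \ref{inverse fourier transform defi ch5} extended from $C_{\mathcal{H}}^\infty(\mathbb{R}^n)$ and $C_{\mathcal{H}^*}^\infty(\mathbb{R}^n)$ to $L^2(\mathbb{R}^n)$ by density. Pairing these in the $L^2$ inner product, using continuity of $\langle\cdot,\cdot\rangle_{L^2}$ to pull the sums outside, and invoking biorthogonality to collapse the resulting double sum, one arrives at the fundamental identity
\[
\langle f_1, f_2\rangle_{L^2} = \sum_{\xi\in\mathcal{I}} \widehat{f_1}(\xi)\,\overline{\widehat{f_2}_*(\xi)}.
\]
Setting $f_1 = f_2 = f$ yields the norm identity $\|f\|_{L^2}^2 = \sum_\xi \widehat{f}(\xi)\,\overline{\widehat{f}_*(\xi)}$.

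The stated formulas for the $\ell^2(\mathcal{H})$ and $\ell^2(\mathcal{H}^*)$ inner products then follow by simply unwinding the definitions: the composition $\mathcal{F}_{\mathcal{H}^*}\circ \mathcal{F}_{\mathcal{H}}^{-1}$ sends $\widehat{f_2}$ to $\widehat{f_2}_*$, since $\mathcal{F}_{\mathcal{H}}^{-1}\widehat{f_2} = f_2$ and $\mathcal{F}_{\mathcal{H}^*} f_2 = \widehat{f_2}_*$; so $\langle \widehat{f_1},\widehat{f_2}\rangle_{\ell^2(\mathcal{H})}$ coincides with the sum above, and the analogous formula on $\ell^2(\mathcal{H}^*)$ is obtained by interchanging the roles of the two biorthogonal systems. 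The conjugation identity $\overline{\langle \widehat{f_1},\widehat{f_2}\rangle}_{\ell^2(\mathcal{H})} = \langle\widehat{f_2}_*,\widehat{f_1}_*\rangle_{\ell^2(\mathcal{H}^*)}$ then drops out automatically from $\overline{\langle f_1,f_2\rangle_{L^2}} = \langle f_2, f_1\rangle_{L^2}$. The memberships $\widehat{f_i}\in\ell^2(\mathcal{H})$ and $(\widehat{f_i})_*\in\ell^2(\mathcal{H}^*)$ are built into the very definitions of those sequence spaces as images of $L^2(\mathbb{R}^n)$ under the two Fourier transforms, so the only further point is to verify that $\mathcal{F}_{\mathcal{H}}$ and $\mathcal{F}_{\mathcal{H}^*}$ extend continuously from the dense test spaces to $L^2(\mathbb{R}^n)$, which again follows from the Riesz basis norm equivalence.

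The main technical subtlety I anticipate is the manipulation of the double sum $\sum_\xi\sum_\eta \widehat{f_1}(\xi)\,\overline{\widehat{f_2}_*(\eta)}\,\langle u_\xi, v_\eta\rangle_{L^2}$, since the Riesz basis expansions converge in $L^2$ but generally not absolutely: this has to be handled by first truncating to finite partial sums, where the identity is immediate from biorthogonality, and then passing to the limit via the Cauchy--Schwarz inequality together with the frame-type bounds on $\sum|\widehat{f_i}(\xi)|^2$ furnished by the Riesz basis property.
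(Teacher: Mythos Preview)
The paper does not supply a proof of this proposition: it sits in Section~\ref{spectrum and fourier analysis ch5} (Preliminaries), where the authors are recalling the non-harmonic analysis framework of Ruzhansky and Tokmagambetov~\cite{Ruz16} without reproving it. Your argument is the natural one and is essentially what underlies the original result in that reference: expand $f_1,f_2$ via the Riesz basis property, collapse the double sum using biorthogonality $\langle u_\xi,v_\eta\rangle_{L^2}=\delta_{\xi\eta}$, and then read off the $\ell^2(\mathcal{H})$ and $\ell^2(\mathcal{H}^*)$ formulas by unwinding Definition~\ref{l^p space defi ch5}.

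One small remark: in this framework the biorthogonality relation is usually taken as part of the standing hypotheses rather than derived. Any Riesz basis admits a unique biorthogonal Riesz basis, and $\{v_\xi\}$ is \emph{declared} to be that system (normalized appropriately). Your eigenvalue-difference computation $(\lambda_\xi-\lambda_\eta)\langle u_\xi,v_\eta\rangle=0$ is correct but only gives orthogonality across distinct eigenvalues; when $\lambda_\xi=\lambda_\eta$ with $\xi\neq\eta$ (multiplicity), the biorthogonality within the eigenspace is a matter of choice of basis, as you correctly acknowledge. So that paragraph is more of an aside than a step in the proof, and you could simply invoke biorthogonality as given.
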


Let us now recall the definition of Sobolev spaces generated by the operator $\mathcal{H}$.
\begin{defi}[Sobolev spaces $\mathrm{H}_{\mathcal{H}}^{s}(\mathbb{R}^n)$]\label{sobolev space defi ch5}
	For $f \in \mathcal{D}_{\mathcal{H}}^{\prime}(\mathbb{R}^n) \cap \mathcal{D}_{\mathcal{H}^{*}}^{\prime}(\mathbb{R}^n)$ and $s \in \mathbb{R},$ we
	say that $f \in \mathrm{H}_{\mathcal{H}}^{s}(\mathbb{R}^n)$ if and only if $\langle\xi\rangle^{s} \widehat{f}(\xi) \in  \ell^{2}({\mathcal{H}^*})$. The norm on $\mathrm{H}_{\mathcal{H}}^{s}(\mathbb{R}^n)$ is defined by
	$$
	\|f\|_{\mathrm{H}_{\mathcal{H}}^{s}(\mathbb{R}^n)}:=\left(\sum_{\xi \in \mathcal{I}}\langle\xi\rangle^{2 s} \widehat{f}(\xi) \overline{\widehat{f}_{*}(\xi)}\right)^{1/2}.
	$$
\end{defi}
The Sobolev space $\mathrm{H}_{ \mathcal{H}}^{s}(\mathbb{R}^n)$ is then the space of $\mathcal{H}$-distributions $f$ for which $\|f\|_{\mathrm{H}_{\mathcal{H}}^{s}(\mathbb{R}^n)}<\infty$ and  $\mathrm{H}_{\mathcal{H}^{*}}^{s}(\mathbb{R}^n)$  is the space of $\mathcal{H}^*$-distributions $f$ for which 
$$
\|f\|_{\mathrm{H}_{\mathcal{H}^{*}}^s(\mathbb{R}^n)}:=\left(\sum_{\xi \in \mathcal{I}}\langle\xi\rangle^{2 s} \widehat{f}_{*}(\xi) \overline{\widehat{f}(\xi)}\right)^{1 / 2}<\infty.
$$
Observe that $\mathrm{H}_{ \mathcal{H}}^{s}(\mathbb{R}^n)= \mathrm{H}_{ \mathcal{H}^*}^{s}(\mathbb{R}^n)$. For every $s \in \mathbb{R},$ the Sobolev space $\mathrm{H}_{\mathcal{H}}^{s}(\mathbb{R}^n)$ is a Hilbert space with the inner product given by 
$$
\langle f, g\rangle_{\mathrm{H}_{\mathcal{H}}^{s}(\mathbb{R}^n)}:=\sum_{\xi \in \mathcal{I}}\langle\xi\rangle^{2 s} \widehat{f}(\xi) \overline{\widehat{g}_{*}(\xi)}
$$
Similarly, with respect to the following inner product  
$$
\langle f, g\rangle_{\mathrm{H}_{\mathcal{H}^*}^{s}(\mathbb{R}^n)}:=\sum_{\xi \in \mathcal{I}}\langle\xi\rangle^{2 s} \widehat{f}_{*}(\xi) \overline{\widehat{g}(\xi)},
$$
the Sobolev space $\mathrm{H}_{\mathcal{H}^{*}}^{s}(\mathbb{R}^n)$ is also a Hilbert space for every $s \in \mathbb{R}$. It can be observed that the Sobolev spaces $ \mathrm{H}_{\mathcal{H}}^{s}(\mathbb{R}^n)$ and $\mathrm{H}_{\mathcal{H}^{*}}^{s}(\mathbb{R}^n)$ are isometrically isomorphic and in particular $\mathrm{H}_{\mathcal{H}}^{0}(\mathbb{R}^n)=L^2(\mathbb{R}^n)$. Also, for real $s\geq 1$, we define the (Roumieu) Gevrey space $\gamma_{\mathcal{H}}^s$ by the formula
\begin{equation}\label{gevrey space definition ch5}
     f\in \gamma_{\mathcal{H}}^s \Longleftrightarrow \exists A>0: \sum_{\xi \in \mathcal{I}} \mathrm{e}^{2 A \langle\xi\rangle^{\frac{1}{s}}}|\widehat{f}(\xi)|^2<\infty .
\end{equation}
\section{Proofs of the main results}\label{proof of results ch5}
In this section, our first aim is to prove the well-posedness to the
Cauchy problem \eqref{cauchy problem ch5} with regular coefficients. Here we  will consider only the $\mathcal{H}$-Fourier transform. However, one can similarly obtain all the results by considering $\mathcal{H}^{*}$-Fourier transform.

Taking the $\mathcal{H}$-Fourier transform of \eqref{cauchy problem ch5} with respect to  $x \in \mathbb{R}^n$, we obtain
	\begin{equation}\label{fourier transform of cauchy problem ch5}
	\left\{\begin{array}{l}
		\partial_{t}^{2} \widehat{v}(t, \xi)+ |\lambda_{\xi}| a(t) \widehat{v}(t, \xi)+q(t) \widehat{v}(t, \xi)=\widehat{f}(t, \xi), \quad(t, \xi) \in(0, T] \times \mathcal{I}, \\
		\widehat{v}(0, \xi)=\widehat{v}_{0}(\xi), \quad \xi \in \mathcal{I}, \\
		\partial_{t} \widehat{v}(0, \xi)=\widehat{v}_{1}(\xi), \quad \xi \in \mathcal{I}.
	\end{array}\right.
	\end{equation}
The main concept behind our further analysis is that we can analyze each equation in \eqref{fourier transform of cauchy problem ch5} individually and then using the Plancherel formula one can obtain required  estimates. We can reformulate  the Cauchy problem  \eqref{fourier transform of cauchy problem ch5} in the $\mathcal{H}$-Fourier space  as the first order system using the following matrix transformation:
\begin{equation}\label{matrix transformation ch5}
	V(t, \xi):=\left(\begin{array}{c}
		i\langle\xi\rangle \widehat{v}(t, \xi) \\
		\partial_{t} \widehat{v}(t, \xi)
	\end{array}\right), \quad V_{0}(\xi):=\left(\begin{array}{c}
		i\langle\xi\rangle \widehat{v}_{0}(\xi) \\
		\widehat{v}_{1}(\xi)
	\end{array}\right).
\end{equation}
Using above transformation, we see that \eqref{fourier transform of cauchy problem ch5} is equivalent to the following first order system:
\begin{equation}\label{equivalent cauchy problem ch5}
\left\{\begin{array}{l}
	\partial_{t} V(t, \xi)=i\langle\xi\rangle A(t) V(t, \xi)+i\langle\xi\rangle^{-1} Q(t) V(t, \xi)+F(t, \xi), \quad(t, \xi) \in(0, T] \times \mathcal{I}, \\
	V(0, \xi)=V_{0}(\xi), \quad \xi \in \mathcal{I},
\end{array}\right.
\end{equation}
where 
\begin{equation}\label{matrix A and Q ch5}
	A(t):=\left(\begin{array}{cc}
		0 & 1 \\
		a(t) & 0
	\end{array}\right), ~~ \quad Q(t):=\left(\begin{array}{cc}
		0 & 0 \\
		q(t)-a(t) & 0
	\end{array}\right), ~~ \text { and } F(t, \xi):=\left(\begin{array}{c}
0 \\
\widehat{f}(t, \xi)
\end{array}\right).
\end{equation}
Note that the matrix $A(t)$ has eigenvalues $\pm \sqrt{a(t)}$ and its symmetriser $S$ is given by
$$
S(t)=\left(\begin{array}{cc}
	a(t) & 0 \\
	0 & 1
\end{array}\right),
$$
i.e., we have
$$
S A-A^{*} S=0 .
$$
One can observe that $S(t)$ is positive for each $t\in [0,T]$, and 
\begin{equation}\label{symmetriser norm ch5}
	\|S(t)\| \leq(1+|a(t)|), \quad \text {for all } t \in[0, T].
\end{equation}  
Also from definition of $S$ and $Q$, we have
$$
\partial_{t} S(t):=\left(\begin{array}{cc}
	\partial_{t} a(t) & 0 \\
	0 & 0
\end{array}\right) \text { and }\left(S Q-Q^{*} S\right)(t):=\left(\begin{array}{cc}
	0 & a(t)-q(t) \\
	q(t)-a(t) & 0
\end{array}\right).
$$
Hence 
\begin{equation}\label{SQ-QS estimate ch5}
	\left\|\partial_{t}S(t)\right\| \leq\left|\partial_{t} a(t)\right|~~ \text{and}~~ \left\|\left(S Q-Q^{*} S\right)(t)\right\| \leq|q(t)|+|a(t)|, \quad \text{for all}~~ t\in[0, T].
\end{equation}
Now we are ready to prove the well-posedness results for Case 1-Case 4. We start with our first key  result in the scenario when the propagation speed $a$ is regular. 
\begin{proof}[Proof of Theorem \ref{classical sol case1 ch5} \textbf{(Case 1)}]
First, we define the energy
$$
E(t, \xi):=(S(t) V(t, \xi), V(t, \xi)).
$$
Then 
\begin{equation}\label{upper energy estimate ch5}
\begin{aligned}[b]
	E(t, \xi)=(S(t) V(t, \xi), V(t, \xi))& =\langle\xi\rangle^{2} a(t) |\widehat{v}(t, \xi)|^{2}+\left|\partial_{t} \widehat{v}(t, \xi)\right|^{2} \\
	& \leq \sup _{t \in[0, T]}\{a(t), 1\}\left(\langle\xi\rangle^{2}|\widehat{v}(t, \xi)|^{2}+\left|\partial_{t} \widehat{v}(t, \xi)\right|^{2}\right) \\
	& =c_{0}|V(t, \xi)|^{2}, \quad \text{for all} ~~t \in[0, T] ~~\text{and} ~~\xi \in \mathcal{I},
\end{aligned}
\end{equation}
where $c_{0} =\underset{t \in[0, T]}{ \sup} \{a(t), 1\}$. Similarly, we can deduce that 
\begin{equation}\label{lower energy estimate ch5}
		E(t, \xi) \geq c_{1}|V(t, \xi)|^{2}, \quad \text{for all} ~~t \in[0, T] ~~\text{and} ~~\xi \in \mathcal{I},
\end{equation}
where $c_{1} =\underset{t \in[0, T]}{ \inf} \{a(t), 1\}$. Using \eqref{symmetriser norm ch5} and \eqref{SQ-QS estimate ch5}, we obtain
\begin{equation}\label{derivative energy estimate ch5}
	\begin{aligned}[b]
		\partial_{t} E(t, \xi)= & \left(\partial_{t} S(t) V(t, \xi), V(t, \xi)\right)+\left(S(t) \partial_{t} V(t, \xi), V(t, \xi)\right)+\left(S(t) V(t, \xi), \partial_{t} V(t, \xi)\right) \\
		= & \left(\partial_{t} S(t) V(t, \xi), V(t, \xi)\right)+i\langle\xi\rangle(S(t) A(t) V(t, \xi), V(t, \xi))+ (S(t) F(t, \xi), V(t, \xi))\\
		&+ i\langle\xi\rangle^{-1}(S(t) Q(t) V(t, \xi), V(t, \xi))- i\langle\xi\rangle(S(t) V(t, \xi), A(t) V(t, \xi))\\
		& -i\langle\xi\rangle^{-1}(S(t) V(t, \xi), Q(t) V(t, \xi))+(S(t), F(t, \xi) V(t, \xi)) \\
		= & \left(\partial_{t} S(t) V(t, \xi), V(t, \xi)\right)+i\langle\xi\rangle\left(\left(S A-A^{*} S\right)(t) V(t, \xi), V(t, \xi)\right)+ \\
		& i\langle\xi\rangle^{-1}\left(\left(S Q-Q^{*} S\right)(t) V(t, \xi), V(t, \xi)\right)+2 \operatorname{Re}(S(t) F(t, \xi), V(t, \xi))\\
		= & \left(\partial_{t} S(t) V(t, \xi), V(t, \xi)\right)+i\langle\xi\rangle^{-1}\left(\left(S Q-Q^{*} S\right)(t) V(t, \xi), V(t, \xi)\right)+2 \operatorname{Re}(S(t) F(t, \xi), V(t, \xi))\\
		\leq& \left\|\partial_{t} S(t)\right\||V(t, \xi)|^{2}+\left\|\left(S Q-Q^{*} S\right)(t)\right\||V(t, \xi)|^{2}+2\|S(t)\||F(t, \xi) \| V(t, \xi)| \\
		\leq& \left(\left\|\partial_{t} S(t)\right\|+\left\|\left(S Q-Q^{*} S\right)(t)\right\|+\|S(t)\|\right)|V(t, \xi)|^{2}+\|S(t)\||F(t, \xi)|^{2}\\
		\leq& \left(1+\|\partial_{t}a\|_{L^{\infty}}+2\|a\|_{L^{\infty}}+\|q\|_{L^{\infty}}\right)|V(t, \xi)|^{2}+\left(1+\|a\|_{L^{\infty}}\right)|F(t, \xi)|^{2}\\
		=& c^{\prime}|V(t, \xi)|^{2}+ c^{\prime \prime} |F(t, \xi)|^{2},
	\end{aligned}
\end{equation}
where $c^{\prime} = 1+\|\partial_{t}a\|_{L^{\infty}}+2\|a\|_{L^{\infty}}+\|q\|_{L^{\infty}}$ and $c^{\prime \prime}=1+\|a\|_{L^{\infty}}$. Applying the Gronwall’s lemma to inequality \eqref{derivative energy estimate ch5}, we can conclude that
\begin{equation}\label{energy estimate relation ch5}
	E(t, \xi) \leq e^{\int_{0}^{t} c^{\prime} \mathrm{~d} \tau}\left(E(0, \xi)+\int_{0}^{t} c^{\prime \prime} |F(\tau, \xi)|^{2} \mathrm{~d} \tau\right),
\end{equation}
or all $t \in[0, T]$ and $\xi \in \mathcal{I}$. Now, combining \eqref{upper energy estimate ch5}, \eqref{lower energy estimate ch5}, and \eqref{energy estimate relation ch5}, we deduce that
$$
\begin{aligned}
c_{0}|V(t, \xi)|^{2} \leq E(t, \xi) &\leq e^{\int_{0}^{t} c^{\prime} \mathrm{~d} \tau}  \left(E(0, \xi)+\int_{0}^{t} c^{\prime \prime}|F(\tau, \xi)|^{2} \mathrm{~d} \tau\right) \\
& \leq e^{c^{\prime} T}\left(c_{1}|V(0, \xi)|^{2}+c^{\prime \prime} \int_{0}^{T}|F(\tau, \xi)|^{2} \mathrm{~d} \tau\right) .
\end{aligned}
$$
Therefore, for all $t \in[0, T]$ and  $\xi \in \mathcal{I}$, there exists a constant $C_{00}>0$ such that
\begin{equation}
	|V(t,\xi)|^2 \leq C_{00} \left(|V(0,\xi)|^2+\int_{0}^{T}|F(\tau, \xi)|^{2} \mathrm{~d} \tau\right).
\end{equation}
Hence
\begin{equation}\label{fourier coeffiecnt of v estimate ch5}
\begin{aligned}[b]
	\langle\xi\rangle^{2}|\widehat{v}(t, \xi)|^{2}+\left|\partial_{t} \widehat{v}(t, \xi)\right|^{2} &\leq C_{00}\left(\langle\xi\rangle^{2}|\widehat{v}(0, \xi)|^{2}+\left|\partial_{t} \widehat{v}(0, \xi)\right|^{2}+\int_{0}^{T}|F(\tau, \xi)|^{2} \mathrm{~d} \tau\right)\\
 & \leq C_{00}\left(\langle\xi\rangle^{2}|\widehat{v}(0, \xi)|^{2}+\left|\partial_{t} \widehat{v}(0, \xi)\right|^{2}+\int_{0}^{T}|\widehat{f}(\tau, \xi)|^{2} \mathrm{~d} \tau\right).
\end{aligned}
\end{equation}
Multiplying \eqref{fourier coeffiecnt of v estimate ch5} with $\langle\xi\rangle^{2s}$ and   applying Plancherel’s formula, we can conclude that
$$
	\|v(t, \cdot)\|_{\mathrm{H}_{\mathcal{H}}^{1+s}}^{2}+\left\|v_{t}(t, \cdot)\right\|_{\mathrm{H}_{\mathcal{H}}^{s}}^{2} \leq C\left(\left\|v_{0}\right\|_{\mathrm{H}_{\mathcal{H}}^{1+s}}^{2}+\left\|v_{1}\right\|_{\mathrm{H}_{\mathcal{H}}^{s}}^{2}+\|f\|_{L^{2}\left([0, T] ; \mathrm{H}_{\mathcal{H}}^{s}\right)}^{2}\right),
$$
for all $t \in[0, T]$, where the constant $C>0$ independent of $t \in[0, T]$. This completes the proof of the theorem.
\end{proof}
Now we prove our second  key result in the scenario when the propagation speed  $a  $  is a H\"{o}lder continuous function  of order $0<\alpha<1.$ \begin{proof}[Proof of Theorem \ref{classical sol case2 ch5} \textbf{(Case 2)}]
We assume that  $a \in \mathcal{C}^\alpha([0, T])$ with $0<\alpha<1$   such that $\underset{t \in [o,T]}{\inf} a(t)=a_{0}>0.$ In this case, we apply the technique established by Colombini and Kinoshita in \cite{colombini kinoshita}. Our aim is to find the solution of the system \eqref{equivalent cauchy problem ch5} in the following form:
\begin{equation}\label{solution form in case 2 ch5}
    V(t, \xi)=e^{-\rho(t) \langle \xi \rangle^{\frac{1}{s}}}(\operatorname{det} H(t))^{-1} H(t) W(t,\xi),
\end{equation}
where $\rho \in C^{1}([0, T])$ is a real-valued function with the property that $\rho(0)=0$, $W=W(t, \xi)$ will be determined later and 
$$
H(t)=\left(\begin{array}{cc}
1 & 1 \\
-\lambda^{\varepsilon}(t) & \lambda^{\varepsilon}(t)
\end{array}\right).
$$
For all $\varepsilon \in (0,1]$, here we define $\lambda^{\varepsilon}(t)$  as $$\lambda^{\varepsilon}(t)=\left(\sqrt{a}*\psi_{\varepsilon}\right)(t)$$
with $\psi_{\varepsilon}(t)=\frac{1}{\varepsilon}\psi(\frac{t}{\varepsilon})$ for each $t\in[0,T]$, and  $\psi \in C_{c}^{\infty}(\mathbb{R})$ be a non-negative function such that $$supp(\psi)\subseteq [0,T]\quad \text{and}\quad  \int_{0}^{T} \psi(x) \mathrm{~d} x =1.$$  By above construction, we can conclude that $\lambda^{\varepsilon}\in C^{\infty}([0,T])$ and 
$ \lambda^{\varepsilon}(t)\geq \sqrt{a_{0}}$ 
for all $t \in[0, T]$ and $\varepsilon \in(0,1].$ Therefore,
$ \operatorname{det} H(t)=2\lambda^{\varepsilon}(t)\geq 2 \sqrt{a_{0}}.$ 
Furthermore, for every $t\in [0, T]$,  using the H\"{o}lder regularity property of $a(t)$ with an order of $\alpha$, we can deduce that  
\begin{equation}\label{estimate for a-lambda ch5}
\begin{aligned}[b]
\left|\lambda^\varepsilon(t)-\sqrt{a(t)}\right| & =\left|\left(\sqrt{a} * \psi_\varepsilon\right)(t)-\sqrt{a(t)}\right| \\
& =\left|\int_{0}^{t} \sqrt{a(t-y)} \psi_\varepsilon(y) \mathrm{~d} y-\sqrt{a(t)} \int_{0}^{T} \psi(y) \mathrm{~d} y\right| \\
& =\left|\int_{0}^{\frac{t}{\varepsilon}} \sqrt{a(t-\varepsilon x)} \psi(x) \mathrm{~d} x-\sqrt{a(t)} \int_{0}^{T} \psi(x) \mathrm{~d} x\right| \\
& \leq\int_{0}^{\frac{t}{\varepsilon}} \frac{\mid a(t-\varepsilon x)-a(t)) \mid}{\sqrt{a(t-\varepsilon x)}-\sqrt{a(t)}} \psi(x) \mathrm{~d} x \\
& \leq \frac{\|a\|_{\mathcal{C}^\alpha([0, T])}}{2 \sqrt{a_0}} \varepsilon^\alpha .
\end{aligned}
\end{equation}
Now, substituting \eqref{solution form in case 2 ch5} in \eqref{equivalent cauchy problem ch5}, we have
$$
\begin{aligned}
&e^{-\rho(t) \langle \xi \rangle^{\frac{1}{s}}}(\operatorname{det} H(t))^{-1} H(t) \partial_{t} W(t,\xi) +e^{-\rho(t) \langle \xi \rangle^{\frac{1}{s}}(\xi)}\left(-\rho^{\prime}(t) \langle \xi \rangle^{\frac{1}{s}}\right)(\operatorname{det} H(t))^{-1} H(t) W(t,\xi) \\
& -e^{-\rho(t) \langle \xi \rangle^{\frac{1}{s}}} \frac{\partial_{t} \operatorname{det} H(t)}{(\operatorname{det} H(t))^{2}} H(t) W(t,\xi) +e^{-\rho(t) \langle \xi \rangle^{\frac{1}{s}}}(\operatorname{det} H(t))^{-1}\left(\partial_{t} H(t)\right) W(t,\xi) \\
& =i \langle \xi \rangle e^{-\rho(t) \langle \xi \rangle^{\frac{1}{s}}}(\operatorname{det} H(t))^{-1} A(t) H(t) W(t,\xi)+i\langle\xi\rangle^{-1} e^{-\rho(t) \langle \xi \rangle^{\frac{1}{s}}} (\operatorname{det} H(t))^{-1} Q(t)H(t) W(t,\xi) + F(t,\xi).
\end{aligned}
$$
Multiplying the above equation by $e^{\rho(t) \langle \xi \rangle^{\frac{1}{s}}} (\operatorname{det} H(t)) H(t)^{-1}$, we get
$$
\begin{aligned}
&\partial_{t} W(t,\xi) -\rho^{\prime}(t) \langle \xi \rangle^{\frac{1}{s}} W(t,\xi)- \frac{\partial_{t} \operatorname{det} H(t)}{\operatorname{det} H(t)}  W(t,\xi) +H(t)^{-1}\partial_{t} H(t) W(t,\xi) \\
& =i \langle \xi \rangle ( H(t))^{-1} A(t) H(t) W(t,\xi)+i\langle\xi\rangle^{-1} H(t)^{-1} Q(t)H(t) W(t,\xi)+e^{\rho(t) \langle \xi \rangle^{\frac{1}{s}}} (\operatorname{det} H(t)) H(t)^{-1} F(t,\xi).
\end{aligned}
$$
Therefore
\begin{equation}\label{derivate W(t) estimate ch5}
\begin{aligned}[b]
& \partial_{t}|W(t, \xi)|^{2}= 2 \operatorname{Re}\left(\partial_{t} W(t, \xi), W(t, \xi)\right) \\
&=2 \rho^{\prime}(t) \langle \xi \rangle^{\frac{1}{s}}|W(t, \xi)|^{2} +2 \frac{\partial_{t} \operatorname{det} H(t)}{\operatorname{det} H(t)}|W(t, \xi)|^{2} -2 \operatorname{Re}\left(H(t)^{-1} \partial_{t} H(t) W(t, \xi), W(t, \xi)\right) \\
&-2 \langle \xi \rangle \operatorname{Im}\left(H(t)^{-1} A(t) H(t) W(t, \xi), W(t, \xi)\right)-2 \langle \xi \rangle^{-1}\operatorname{Im}\left(H(t)^{-1} Q(t) H(t) W(t, \xi), W(t, \xi)\right) \\
&+2 e^{\rho(t) \langle \xi \rangle^{\frac{1}{s}}} \operatorname{Re}\left((\operatorname{det} H(t)) H(t)^{-1} F(t, \xi), W(t, \xi)\right).
\end{aligned}
\end{equation}
For any matrix $C=C(t) \in M_{2}(\mathbb{R})$, we know that
$$
2\operatorname{Im}\left(H(t)^{-1} C(t) H(t) W(t, \xi), W(t, \xi)\right) \leq \left\|H^{-1} C H-\left(H^{-1} C H\right)^{*}\right\||W(t, \xi)|^{2}
$$
 for each $t \in [0,T]$. This implies that
\begin{equation} \label{derivate of W(t) estimate 2 ch5}
\begin{aligned}[b]
\partial_{t}|W(t, \xi)|^{2}& \leq 2 \rho^{\prime}(t) \langle \xi \rangle^{\frac{1}{s}}|W(t, \xi)|^{2} +2\left|\frac{\partial_{t} \operatorname{det} H(t)}{\operatorname{det} H(t)}\right||W(t, \xi)|^{2} +2\left\|H^{-1} \partial_{t} H\right\|\left|W(t, \xi)^{2}\right|\\
& +\langle \xi \rangle\left\|H^{-1} A H-\left(H^{-1} A H\right)^{*}\right\||W(t, \xi)|^{2}+\langle \xi \rangle^{-1}\left\|H^{-1} Q H-\left(H^{-1} Q H\right)^{*}\right\||W(t, \xi)|^{2} \\
& +2 e^{\rho(t) \langle \xi \rangle^{\frac{1}{s}}}\left\|(\operatorname{det} H) H^{-1}\right\||F(t, \xi)||W(t, \xi)|.
\end{aligned}
\end{equation}
Now,  using the methods described in  \cite{garetto micheal,ruzhansky taranto}, we  prove the following   estimations to handle the aforementioned term:
\begin{enumerate}
    \item $\|\frac{\partial_{t} \operatorname{det} H}{\operatorname{det} H}\| \leq c_{1} \varepsilon^{\alpha-1}$,\\
    \item $\left\|H^{-1} \partial_{t} H\right\| \leq c_{2} \varepsilon^{\alpha-1}$,\\
    \item $\left\|H^{-1} A H-\left(H^{-1} A H\right)^{*}\right\| \leq c_{3} \varepsilon^{\alpha}$,\\
    \item  $\left\|H^{-1} Q H-\left(H^{-1} Q H\right)^{*}\right\| \leq c_{4} \varepsilon^{\alpha}$,\\
    \item $\left\|(\operatorname{det} H) H^{-1}\right\|\leq c_{5} \varepsilon^{\alpha},$
\end{enumerate}
where $c_{1},c_{2},c_{3},c_{4},c_{5}$ are positive constants. Proof of the above five estimates are as follows:
\begin{enumerate}
\item It is easy to check that 
$$\left|\frac{\partial_{t} \operatorname{det} H(t)}{\operatorname{det} H(t)}\right| = \left|\frac{2\partial_{t}\lambda_{\varepsilon}(t)}{2\lambda_{\varepsilon}(t)}\right|\leq \frac{|\partial_{t}\lambda_{\varepsilon}(t)|}{2\sqrt{a_{0}}}$$ and 
\begin{equation}\label{derivate bound of lambda ch5}
    \begin{aligned}[b]
    \left|\partial_{t} \lambda^{\varepsilon}(t)\right| & =\left|\sqrt{a} * \partial_{t} \psi_{\varepsilon}(t)\right|=\left|\frac{1}{\varepsilon} \int_{0}^{\frac{t}{\varepsilon}} \sqrt{a(t-s \varepsilon)} \psi^{\prime}(s) \mathrm{~d} s\right| \\
& \leq \frac{1}{\varepsilon} \int_{0}^{\frac{t}{\varepsilon}}|\sqrt{a(t-s \varepsilon)}-\sqrt{a(t)}| \psi^{\prime}(s) \mathrm{~d} s \\
& =\frac{1}{\varepsilon} \int_{0}^{\frac{t}{\varepsilon}} \frac{|a(t-s \varepsilon)-a(t)|}{\sqrt{a(t-s \varepsilon)}+\sqrt{a(t)}} \psi^{\prime}(s) \mathrm{~d} s \\
& \leq c^{\prime}_{1} \varepsilon^{\alpha-1}, \quad \text{for each $t\in[0,T]$},
    \end{aligned}
\end{equation}
where $c^{\prime}_{1}=\frac{\|a\|_{\mathcal{C}^\alpha([0, T])}}{2 \sqrt{a_0}} \int_{0}^{T}\psi^{\prime}(s) \mathrm{~d} s$. Thus
$$\left\|\frac{\partial_{t} \operatorname{det} H}{\operatorname{det} H}\right\| \leq \frac{c^{\prime}_{1} \varepsilon^{\alpha-1}}{2\sqrt{a_{0}}}=c_{1}\varepsilon^{\alpha-1} \quad \text{with  $c_{1}=\frac{c^{\prime}_{1}}{2\sqrt{a_{0}}}.$}$$
 
\item  
Note that $H^{-1}(t)=\frac{1}{2\lambda^{\varepsilon}(t)}\left(\begin{array}{cc}\lambda^{\varepsilon}(t) & -1 \\\lambda^{\varepsilon}(t) & 1\end{array}\right)$ and $\partial_{t}H(t)=\left(\begin{array}{cc}0 & 0 \\ \partial_{t} \lambda^{\varepsilon}(t) & \partial_{t} \lambda^{\varepsilon}(t)\end{array}\right)$. Thus   
 $$ H^{-1}(t)\partial_{t}H(t)=\frac{1}{2\lambda^{\varepsilon}(t)}\left(\begin{array}{cc}-\partial_{t}\lambda^{\varepsilon}(t) & -\partial_{t}\lambda^{\varepsilon}(t)\\\partial_{t}\lambda^{\varepsilon}(t) & \partial_{t}\lambda^{\varepsilon}(t)\end{array}\right).$$
Therefore
$$\left\|H^{-1} \partial_{t} H\right\| \leq \frac{c^{\prime}_{1} \varepsilon^{\alpha-1}}{2\sqrt{a_{0}}}=c_{2}\varepsilon^{\alpha-1} \quad \text{with $c_{2}=\frac{c^{\prime}_{1}}{2\sqrt{a_{0}}}.$}$$
 \item It is easy to verify that 
\begin{equation}\label{experession for H inverse AH ch5}
    H^{-1}(t) A(t) H(t)-\left(H^{-1}(t) A(t) H(t)\right)^{*}=\frac{1}{2\lambda^{\varepsilon}(t)}\left(\begin{array}{cc}
0 & -2 a(t)+2\left(\lambda^{\varepsilon}(t)\right)^{2}\\
2 a(t)-2\left(\lambda^{\varepsilon}(t)\right)^{2} & 0
\end{array}\right).
\end{equation}
Also,   from \eqref{estimate for a-lambda ch5}, one can deduce that 
\begin{equation}\label{estimate for a-lambda^2 ch5}
    \begin{aligned}[b]
        \left|a(t)-\left(\lambda^{\varepsilon}(t)\right)^{2}\right| & =\left|\left(\sqrt{a(t)}-\left(\lambda^{\varepsilon}(t)\right)\right)\left(\sqrt{a(t)}+\left(\lambda^{\varepsilon}(t)\right)\right)\right| \\
    & \leq \frac{\|a\|_{\mathcal{C}^\alpha([0, T])}}{2 \sqrt{a_0}} \varepsilon^{\alpha}\left|\sqrt{a(t)}+\int_{0}^{t} \sqrt{a(t-s)} \psi_{\varepsilon}(s) \mathrm{~d} s\right| \\
    & \leq \frac{\|a\|_{\mathcal{C}^\alpha([0, T])}}{\sqrt{a_0}}\|\sqrt{a}\|_{L^{\infty}([0, T])} \varepsilon^{\alpha}\\
    &=c_{3}^{\prime} \varepsilon^{\alpha}, \quad \text{where $c_{3}^{\prime}=\frac{\|a\|_{\mathcal{C}^\alpha([0, T])}}{\sqrt{a_0}}\|\sqrt{a}\|_{L^{\infty}([0, T])}.$}
    \end{aligned}
\end{equation}
    Thus
$$\left\|H^{-1} A H-\left(H^{-1} A H\right)^{*}\right\| \leq \frac{c_{3}^{\prime}\varepsilon^{\alpha}}{2\sqrt{a_{0}}} = c_{3} \varepsilon^{\alpha},\quad \text{
where} \quad c_{3}=\frac{c_{3}^{\prime}}{2\sqrt{a_{0}}}.$$
\item A direct calculations yields
\begin{equation}\label{experession for H inverse QH ch5}
    H^{-1}(t) Q(t) H(t)-\left(H^{-1}(t) Q(t) H(t)\right)^{*}=\frac{1}{2\lambda^{\varepsilon}(t)}\left(\begin{array}{cc}
-(q(t)-a(t)) & -(q(t)-a(t))\\
q(t)-a(t) & q(t)-a(t)
\end{array}\right).
\end{equation}
Therefore
$$\left\|H^{-1} Q H-\left(H^{-1} Q H\right)^{*}\right\| \leq \frac{\|a\|_{L^{\infty}([0, T])}+\|q\|_{L^{\infty}([0, T])}}{2\sqrt{a_{0}}}\varepsilon^{\alpha} =c_{4} \varepsilon^{\alpha},$$
where $c_{4}=\frac{\|a\|_{L^{\infty}([0, T])}+\|q\|_{L^{\infty}([0, T])}}{2\sqrt{a_{0}}}.$
\item  For each $t\in[0,T],$ a simple computation gives us 
$ |\lambda^{\varepsilon}(t)|\leq c_{5} \varepsilon^{\alpha}$ 
for some positive constant $c_{5}$. Thus
$$\left\|(\operatorname{det} H) H^{-1}\right\|=\left\|\lambda^{\varepsilon}\right\|\leq c_{5} \varepsilon^{\alpha}.$$
\end{enumerate}
Now,  using all the five estimates described above and    the fact that $|F(t, \xi)|\leq c_{0} e^{-A\langle \xi \rangle^{\frac{1}{s}}}$ for some constants $c_{0},A>0$,  the estimate    \eqref{derivate of W(t) estimate 2 ch5} further  reduces to 
\begin{equation} \label{modified derivate of W(t) estimate 2 ch5}
\begin{aligned}[b]
\partial_{t}|W(t, \xi)|^{2}& \leq\left(2 \rho^{\prime}(t) \langle \xi \rangle^{\frac{1}{s}} +2c_{1} \varepsilon^{\alpha-1}+2c_{2} \varepsilon^{\alpha-1} +\langle \xi \rangle c_{3} \varepsilon^{\alpha}+\langle \xi \rangle^{-1}c_{4} \varepsilon^{\alpha}\right)|W(t, \xi)|^{2} \\
& +2c_{0}c_{5} e^{\left(\rho(t)-A\right) \langle \xi \rangle^{\frac{1}{s}}} \varepsilon^{\alpha}|W(t, \xi)|.
\end{aligned}
\end{equation}
Let us choose $\varepsilon=\langle\xi\rangle^{-1}$ and define $\rho(t)=-kt,$ where $k>0$ will be chosen later. Then  \eqref{modified derivate of W(t) estimate 2 ch5} becomes
\begin{equation} \label{final modified derivate of W(t) estimate 2 ch5}
\begin{aligned}[b]
\partial_{t}|W(t, \xi)|^{2}& \leq\left(-2k \langle \xi \rangle^{\frac{1}{s}} +2c_{1} \langle\xi\rangle^{1-\alpha}+2c_{2} \langle\xi\rangle^{1-\alpha} +\langle \xi \rangle c_{3} \langle\xi\rangle^{-\alpha}+\langle \xi \rangle^{-1}c_{4} \langle\xi\rangle^{-\alpha}\right)|W(t, \xi)|^{2} \\
& +2c_{0}c_{5} e^{\left(-kt-A\right) \langle \xi \rangle^{\frac{1}{s}}} \langle\xi\rangle^{-\alpha}|W(t, \xi)|\\
 &\leq\left(-2k \langle \xi \rangle^{\frac{1}{s}} +2c_{1} \langle\xi\rangle^{1-\alpha}+2c_{2} \langle\xi\rangle^{1-\alpha} + c_{3} \langle\xi\rangle^{1-\alpha}+c_{4} \langle\xi\rangle^{1-\alpha}\right)|W(t, \xi)|^{2} \\
& +2c_{0}c_{5} e^{-A \langle \xi \rangle^{\frac{1}{s}}} \langle\xi\rangle^{-\alpha}|W(t, \xi)|.
\end{aligned}
\end{equation}
Without any loss of generality, we assume that $|W(t, \xi)|\geq1.$ We also  assume that $\frac{1}{s}>1-\alpha.$ Then
$$\partial_{t}|W(t, \xi)|^{2} \leq \left(-2k \langle \xi \rangle^{1-\alpha} +k_{0} \langle\xi\rangle^{1-\alpha}\right)|W(t, \xi)|^{2},$$
where $k_{0}=2c_1+c_2+c_3+c_4+2c_0c_5.$ Further, if we choose $k>\frac{k_{0}}{2}$, then for each $\xi \in \mathcal{I},$ we obtained that
$$\partial_{t}|W(t, \xi)|^{2} \leq 0.$$
This implies that $|W(t,\xi)|$ is a monotone function for each $\xi \in \mathcal{I},$ and hence
\begin{equation}\label{estimate of norm of V in case 2ch5}
    \begin{aligned}[b]
        |V(t,\xi)| & \leq\|H(t)\||\operatorname{det}(H(t))|^{-1} e^{k t\langle\xi\rangle^{\frac{1}{s}}}|W(t,\xi)| \\
        & \leq\|H(t)\||\operatorname{det} H(t)|^{-1} e^{kt\langle\xi\rangle^{\frac{1}{s}}}\|H(0)\|^{-1}|\operatorname{det} H(0)||V(0,\xi)|.
    \end{aligned}
\end{equation}
Since
 $\|H(t)\||\operatorname{det} H(t)|^{-1} \|H(0)\|^{-1}|\operatorname{det} H(0)|\leq b_{0}$ 
for some $b_{0}>0,$ we have
$$|V(t,\xi)|  \leq b_{0}e^{kT\langle\xi\rangle^{\frac{1}{s}}}|V(0,\xi)|.$$
Now, using \eqref{matrix transformation ch5}, we get
$$\langle\xi\rangle^{2}|\widehat{v}(t, \xi)|^{2}+\left|\partial_{t} \widehat{v}(t, \xi)\right|^{2} \leq b_{0}^2 e^{2kT\langle\xi\rangle^{\frac{1}{s}}}\left(\langle\xi\rangle^{2}|\widehat{v}(0, \xi)|^{2}+\left|\partial_{t} \widehat{v}(0, \xi)\right|^{2}\right).$$
Since the initial Cauchy data $v_{0}, v_{1} \in \gamma_{\mathcal{H}}^{s}$,   we can conclude that the Cauchy problem \eqref{cauchy problem ch5} has a unique solution $v \in C^2\left([0, T] ; \gamma_{\mathcal{H}}^{s}\right)$ provided  
 $$1\leq s < 1+ \frac{\alpha}{1-\alpha}.$$
\end{proof}
Now we prove our third  key result in the scenario when the propagation speed   $a \in \mathcal{C}^l([0, T])$ with $l\geq2.$ 
\begin{proof}[Proof of Theorem \ref{classical sol case3 ch5} \textbf{(Case 3)}]
In this case, we assume that $a \in \mathcal{C}^l([0, T])$ with $l\geq2,$   such that $a(t)\geq0.$ Using the notations from the preceding cases, we want to investigate the well-posedness of the system \eqref{equivalent cauchy problem ch5}. Let $V(t,\xi)$ be a column vector with column entry $V_{1}(t,\xi)$ and $V_{2}(t,\xi).$  Consider the quasi-symmetriser  
$$
	P_{\varepsilon}(t):=\left(\begin{array}{cc}
		a(t) & 0 \\
		0 & 1
	\end{array}\right)+\varepsilon^{2}\left(\begin{array}{ll}
		1 & 0 \\
		0 & 0
	\end{array}\right)
	$$ for the matrix $A(t)$ given in \eqref{matrix A and Q ch5}.
Then for each $t\in[0,T],$ we have the following estimate  
\begin{equation}\label{PV,V case3 ch5}
    \left(P_{\varepsilon}(t) V(t,\xi), V(t,\xi)\right)=\left(a(t)+\varepsilon^{2}\right)\left|V_{1}(t,\xi)\right|^{2}+\left|V_{2}(t,\xi)\right|^{2}.
\end{equation}
Keeping in mind 
\begin{equation}\label{PA-AP case3 ch5}
    P_{\varepsilon}(t) A(t)-A^{*}(t) P_{\varepsilon}(t)=\left(\begin{array}{cc}
			0 & \varepsilon^{2} \\
			-\varepsilon^{2} & 0
   \end{array}\right)
\end{equation}
for each $t\in[0,T],$  we can write  
\begin{equation}\label{PA-AP norm case3 ch5}
    \begin{aligned}[b]
        &i\left(\left(P_{\varepsilon}(t) A(t)-A^{*}(t) P_{\varepsilon}(t)\right) V(t, \xi), V(t, \xi)\right) \\
        &=i \left(\varepsilon^{2} \overline{V_{1}(t, \xi)} V_{2}(t, \xi)-\varepsilon^{2} V_{1}(t, \xi) \overline{V_{2}(t, \xi)}\right)\\
        &= -2i^2 \varepsilon^{2} \operatorname{Im}\left(V_{1}(t, \xi) \overline{V_{2}(t, \xi)}\right)\\
        &\leq 2\varepsilon^{2}|V_{1}(t, \xi)||V_{2}(t, \xi)|\\
        & \leq \varepsilon\left(\varepsilon^{2}\left|V_{1}(t, \xi)\right|^{2}+\left|V_{2}(t, \xi)\right|^{2}\right) \\
        & \leq \varepsilon\left(\left(a(t)+\varepsilon^{2}\right)\left|V_{1}(t, \xi)\right|^{2}+\left|V_{2}(t, \xi)\right|^{2}\right)\\
        & = \varepsilon\left(P_{\varepsilon}(t) V(t,\xi), V(t,\xi)\right).
    \end{aligned}
\end{equation}
Following the techniques given  in \cite[Theorem 6]{garetto micheal 2}, we can obtain the following estimate:
\begin{equation}\label{PQ-QP estimate bound case3 ch5}
    \left|i\left(\left(P_{\varepsilon}(t) Q(t)-Q^{*}(t) P_{\varepsilon}(t)\right) V(t, \xi), V(t, \xi)\right)\right| \leq c_{1} (P_{\varepsilon}(t) V(t, \xi), V(t, \xi))
\end{equation}
for some positive constant $c_{1}.$ Now, we define the energy as
$$
E_{\varepsilon}(t, \xi):=(P_{\varepsilon}(t) V(t, \xi), V(t, \xi)).
$$
Then 
\begin{equation}\label{derivative energy estimate case3 ch5}
	\begin{aligned}[b]
		\partial_{t} E_{\varepsilon}(t, \xi)= & \left(\partial_{t} P_{\varepsilon}(t) V(t, \xi), V(t, \xi)\right)+\left(P_{\varepsilon}(t) \partial_{t} V(t, \xi), V(t, \xi)\right)+\left(P_{\varepsilon}(t) V(t, \xi), \partial_{t} V(t, \xi)\right) \\
		= & \left(\partial_{t} P_{\varepsilon}(t) V(t, \xi), V(t, \xi)\right)+i\langle\xi\rangle\left(P_{\varepsilon}(t) A(t) V(t, \xi), V(t, \xi)\right)\\
         & +i\langle\xi\rangle^{-1}\left(P_{\varepsilon}(t) Q(t) V(t, \xi), V(t, \xi)\right)+\left(P_{\varepsilon}(t) F(t, \xi), V(t, \xi)\right)\\
         & -i\langle\xi\rangle\left(P_{\varepsilon}(t) V(t, \xi), A(t) V(t, \xi)\right)-i\langle\xi\rangle^{-1}\left(P_{\varepsilon}(t)  V(t, \xi), Q(t)V(t, \xi)\right)\\
         & +\left(P_{\varepsilon}(t) V(t, \xi), F(t, \xi)\right)\\
         =& \left(\partial_{t} P_{\varepsilon}(t) V(t, \xi), V(t, \xi)\right)+i \langle\xi\rangle\left(\left(P_{\varepsilon}(t) A(t)-A^{*}(t) P_{\varepsilon}(t)\right) V(t, \xi), V(t, \xi)\right)\\
         & +i \langle\xi\rangle^{-1}\left(\left(P_{\varepsilon}(t) Q(t)-Q^{*}(t) P_{\varepsilon}(t)\right) V(t, \xi), V(t, \xi)\right)\\
         &+ \left(P_{\varepsilon} F(t, \xi), V(t, \xi)\right)+\left(P_{\varepsilon} V(t, \xi), F(t, \xi)\right).
	\end{aligned}
\end{equation}
 For each $\varepsilon\in (0,1],$ using the equlity \eqref{PV,V case3 ch5},  we have
\begin{equation}\label{P,V right side estimate ch5}
    \begin{aligned}[b]
        \left(P_{\varepsilon}(t) V(t,\xi), V(t,\xi)\right)&=\left(a(t)+\varepsilon^{2}\right)\left|V_{1}(t,\xi)\right|^{2}+\left|V_{2}(t,\xi)\right|^{2}\\
        &\leq \left(\|a\|_{L^{\infty}[0,T]}+1\right)\left|V(t,\xi)\right|^{2}
    \end{aligned}
\end{equation}
and
\begin{equation}\label{P,V left side estimate ch5}
    \begin{aligned}[b]
        \left(P_{\varepsilon}(t) V(t,\xi), V(t,\xi)\right)&=\left(a(t)+\varepsilon^{2}\right)\left|V_{1}(t,\xi)\right|^{2}+\left|V_{2}(t,\xi)\right|^{2}\\
        & \geq \epsilon^{2}\left|V_{1}(t,\xi)\right|^{2}+\frac{\epsilon^{2}}{\|a\|_{L^{\infty}([0, T])}+1}\left|V_{2}(t,\xi)\right|^{2} \\
        & \geq \frac{\epsilon^{2}}{\|a\|_{L^{\infty}([0, T])}+1}\left|V_{1}(t,\xi)\right|^{2}+\frac{\epsilon^{2}}{\|a\|_{L^{\infty}([0, T])}+1}\left|V_{2}(t,\xi)\right|^{2} \\
        & =c_{2}^{-1} \epsilon^{2}|V(t,\xi)|^{2},
    \end{aligned}
\end{equation}
where $c_{2}=\|a\|_{L^{\infty}([0, T])}+1.$  Combining \eqref{P,V right side estimate ch5} and \eqref{P,V left side estimate ch5}, we have
\begin{equation}\label{E epsilon both side estimate case3 ch5}
    c_{2}^{-1} \epsilon^{2}|V(t,\xi)|^{2} \leq  \left(P_{\varepsilon}(t) V(t,\xi), V(t,\xi)\right)\leq c_{2}\left|V(t,\xi)\right|^{2}.
\end{equation}
Also, one can observe that
\begin{equation}\label{P,V left side estimate with V2 case3 ch5}
 \left(P_{\varepsilon}(t) V(t,\xi), V(t,\xi)\right)=\left(a(t)+\varepsilon^{2}\right)\left|V_{1}(t,\xi)\right|^{2}+\left|V_{2}(t,\xi)\right|^{2} \geq \left|V_{2}(t,\xi)\right|^{2}.
\end{equation}
 Since $f \in C\left([0, T] ; \gamma_{\mathcal{H}}^{s}\right)$, there exist positive constants  $c_{3} $ and $A$ such that $|\widehat{f}(t, \xi)| \leq c_{3} e^{-A \langle \xi \rangle^{\frac{1}{s}}}$ for all $t \in[0, T]$ and $\xi \in \mathcal{I}$.  Therefore 
 $$	\begin{aligned}
	    &\left(P_{\varepsilon}(t, \xi) F(t, \xi), V(t, \xi)\right)+\left(P_{\varepsilon}(t, \xi) V(t, \xi), F(t, \xi)\right)\\
     &= V_{2}(t, \xi)\overline{\widehat{f}(t, \xi)}+\widehat{f}(t, \xi)\overline{V_{2}(t, \xi)}\\
     &= 2\operatorname{Re}\left(V_{2}(t, \xi)\overline{\widehat{f}(t, \xi)}\right)\\
     &\leq 2|\widehat{f}(t, \xi)||V_{2}(t, \xi)|\\
     &\leq 2c_{3}e^{-A \langle \xi \rangle^{\frac{1}{s}}}|V_{2}(t, \xi)|\\
     &=c_{4}e^{-A \langle \xi \rangle^{\frac{1}{s}}}|V_{2}(t, \xi)|, \quad \text{where $c_{4}=2c_{3}.$ }
	\end{aligned}
 $$
 Without any loss of generality, we also  assume that $|V_{2}(t, \xi)|\geq1.$  Then  \eqref{derivative energy estimate case3 ch5} also can be express as 
\begin{equation}\label{modified derivative energy estimate case3 ch5}
	\begin{aligned}[b]
		\partial_{t} E_{\varepsilon}(t, \xi)
         &\leq \left(\partial_{t} P_{\varepsilon}(t) V(t, \xi), V(t, \xi)\right)+\varepsilon\langle\xi\rangle\left(P_{\varepsilon}(t) V(t,\xi), V(t,\xi)\right)\\
         &+c_{1}\langle\xi\rangle^{-1}\left(P_{\varepsilon}(t) V(t,\xi),V(t,\xi)\right)+c_{4}e^{-A \langle \xi \rangle^{\frac{1}{s}}}|V_{2}(t, \xi)|^2\\
         &\leq \left(\partial_{t} P_{\varepsilon}(t) V(t, \xi), V(t, \xi)\right)+\varepsilon\langle\xi\rangle\left(P_{\varepsilon}(t) V(t,\xi),V(t,\xi)\right)\\
        &+c_{1}\left(P_{\varepsilon}(t) V(t,\xi),V(t,\xi)\right)+c_{4}e^{-A \langle \xi \rangle^{\frac{1}{s}}}\left(P_{\varepsilon}(t) V(t,\xi), V(t,\xi)\right)\\
        &\leq \left(\frac{\left(\partial_{t} P_{\varepsilon}(t) V(t, \xi), V(t, \xi)\right)}{\left(P_{\varepsilon}(t) V(t, \xi), V(t, \xi)\right)}+\varepsilon\langle\xi\rangle+c_{1}+c_{4}\right)E_{\varepsilon}(t, \xi)\\
        &= \left(\frac{\left(\partial_{t} P_{\varepsilon}(t) V(t, \xi), V(t, \xi)\right)}{\left(P_{\varepsilon}(t) V(t, \xi), V(t, \xi)\right)}+\varepsilon\langle\xi\rangle+c_{5}\right)E_{\varepsilon}(t, \xi),
	\end{aligned}
 \end{equation}
 where $c_{5}=c_{1}+c_{4}.$ Again, from \cite{garetto micheal},  for all $\varepsilon \in(0,1]$ and $ t \in[0, T]$, we have the following estimate \begin{equation}\label{integral estimate case3 ch5}
		\int_{0}^{T} \frac{\left(\partial_{t} P_{\varepsilon}(t) V(t, \xi), V(t, \xi)\right)}{\left(P_{\varepsilon}(t) V(t, \xi), V(t, \xi)\right)} \mathrm{~d} t \leq C_{0} \varepsilon^{-\frac{2}{l}}\|a\|_{\mathcal{C}^{l}([0, T])}^{\frac{1}{l}}
	\end{equation}
 for some   constant $C_{0}>0.$  Now, applying   Gronwall's lemma, we get
  \begin{equation}\label{E epsilon estimate case3 ch5}
     E_\epsilon(t,\xi) \leq e^{(K_{0}\varepsilon^{-\frac{2}{l}}+\varepsilon\langle\xi\rangle+c_{5})T}E_\epsilon(0,\xi),
 \end{equation}
 where $K_{0}=C_{0}\|a\|_{\mathcal{C}^{l}([0, T])}^{\frac{1}{l}}.$ Let $\varepsilon^{-\frac{2}{l}}=\varepsilon\langle\xi\rangle$ and $\sigma=1+\frac{l}{2},$ then  $\varepsilon=\langle\xi\rangle^{\frac{-l}{2\sigma}}$ and $\varepsilon^{-\frac{2}{l}}=\langle\xi\rangle^{\frac{1}{\sigma}}.$ Thus  \eqref{E epsilon estimate case3 ch5} become
$$
\begin{aligned}
    E_\epsilon(t,\xi) &\leq e^{c_{5}T}e^{\frac{K_{00}}{2}(\langle\xi\rangle^{\frac{1}{\sigma}}+\langle\xi\rangle^{\frac{1}{\sigma}})}E_\epsilon(0,\xi)\leq c_{6} e^{K_{00}\langle\xi\rangle^{\frac{1}{\sigma}}}E_\epsilon(0,\xi),
\end{aligned}
$$
where $K_{00}= 2\max \{K_{0}, 1\}T$ and $c_{6}=e^{c_{5}T}.$ Consequently, from \eqref{E epsilon both side estimate case3 ch5}, we obtain
$$
c_{2}^{-1} \epsilon^{2}|V(t,\xi)|^{2} \leq  E_\epsilon(t,\xi)\leq c_{6} e^{K_{00}\langle\xi\rangle^{\frac{1}{\sigma}}}E_\epsilon(0,\xi)\leq c_{2}c_{6}\left|V(0,\xi)\right|^{2} e^{K_{00}\langle\xi\rangle^{\frac{1}{\sigma}}},
$$
which  also gives us
$$
|V(t,\xi)|^{2}\leq c_{2}^2 c_{6}^2 \langle\xi\rangle^{\frac{l}{\sigma}}e^{K_{00}\langle\xi\rangle^{\frac{1}{\sigma}}}\left|V(0,\xi)\right|^{2}
$$
for all $t\in[0,T]$ and   $\xi \in \mathcal{I}.$  This implies that 
$$\langle\xi\rangle^{2}|\widehat{v}(t, \xi)|^{2}+\left|\partial_{t} \widehat{v}(t, \xi)\right|^{2} \leq c_{2}^2 c_{6}^2 \langle\xi\rangle^{\frac{l}{\sigma}}e^{K_{00}\langle\xi\rangle^{\frac{1}{\sigma}}}\left(\langle\xi\rangle^{2}|\widehat{v}(0, \xi)|^{2}+\left|\partial_{t} \widehat{v}(0, \xi)\right|^{2}\right).$$
Let  us assume  $s < \sigma$. Since the initial Cauchy data $v_{0}, v_{1} \in \gamma_{\mathcal{H}}^{s}$,    we can conclude that the Cauchy problem \eqref{cauchy problem ch5} has a unique solution $v \in C^2\left([0, T] ; \gamma_{\mathcal{H}}^{s}\right)$ provided  
 $$1\leq s < 1+ \frac{l}{2}.$$
\end{proof}
Now we prove our fourth key result in the scenario when the propagation speed  $a \in \mathcal{C}^\alpha([0, T])$ with $0<\alpha<2.$ 
\begin{proof}[Proof of Theorem \ref{classical sol case4 ch5} \textbf{(Case 4)}]
In this case we assume that $a \in \mathcal{C}^\alpha([0, T])$ with $0<\alpha<2,$ be such that $a(t)\geq0.$ Here the roots   $\pm\sqrt{a(t)}$ may coincide and are H\"{o}lder continuous of order $\alpha/2$.  Using the same techniques demonstrated in the proof of Theorem \ref{classical sol case2 ch5} for $\sqrt{a(t)}$ (instead for $a(t)$), we get the desired result.
\end{proof}
\section{Very weak solutions}\label{very weak solutions section ch5}
In this section, we  investigate the existence of very weak solutions   to the
Cauchy problem \eqref{cauchy problem ch5} with distributional coefficients. We allow coefficients $a$ and $q$ to be irregular. As previously mentioned in Section \ref{sec1 ch5},  we possess a concept of very weak solutions for solving problems that may lack a significant solution in the usual distributional sense and discuss the corresponding results for $a, q \in \mathcal{D}^{\prime}([0, T])$. In this case, using the Friedrichs-mollifier, i.e., $\psi \in C_{0}^{\infty}(\mathbb{R}^n), \psi \geq 0$ and $\int_{\mathbb{R}^n} \psi=1$,   we regularize the distributional coefficient $a$ to derive the families of smooth functions $\left(a_{\varepsilon}\right)_{\varepsilon}$, namely
	$$
	a_{\varepsilon}(t):=\left(a * \psi_{\omega(\varepsilon)}\right)(t), \quad \psi_{\omega(\varepsilon)}(t)=\frac{1}{\omega(\varepsilon)} \psi\left(\frac{t}{\omega(\varepsilon)}\right), \quad \varepsilon \in(0,1],
	$$
	where $\omega(\varepsilon) \geq 0$ and $\omega(\varepsilon) \rightarrow 0$ as $\varepsilon \rightarrow 0$.
	\begin{defi}\label{moderate defi ch5}
		Let $K \Subset \mathbb{R}$ denote $K$ as a compact set in $\mathbb{R}$.  
		\begin{enumerate}[(i)]
			\item A net $\left(a_{\varepsilon}\right)_{\varepsilon} \in L_{m}^{\infty}(\mathbb{R})^{(0,1]}$ is said to be $L_{m}^{\infty}$-moderate if for all $K \Subset \mathbb{R}$, there exist $N \in \mathbb{N}_{0}$ and $C_{K}>0$ such that
			
			$$
			\left\|\partial^{k} a_{\varepsilon}\right\|_{L^{\infty}(K)} \leq C_K \varepsilon^{-N-k}, \quad \text {for  } k=0,1,\ldots, m,
			$$
			for all $\varepsilon \in(0,1]$.\\
			
			\item A net $\left(a_{\varepsilon}\right)_{\varepsilon} \in L_{m}^{\infty}(\mathbb{R})^{(0,1]}$ is said to be $L_{m}^{\infty}$-negligible if for all $K \Subset \mathbb{R}$ and $q \in \mathbb{N}_{0}$, there exists $C_{K}>0$ such that
			
			$$
			\left\|\partial^{k} a_{\varepsilon}\right\|_{L^{\infty}(K)} \leq C_{K} \varepsilon^{q}, \quad \text { for } k=0,1,\ldots, m,
			$$
			for all $\varepsilon \in(0,1]$.\\
			
			\item A net $\left(v_{\varepsilon}\right)_{\varepsilon} \in L^{2}\left([0, T] ; \mathrm{H}_{\mathcal{H}}^{s}\right)^{(0,1]}$ is said to be $L^{2}\left([0, T] ; \mathrm{H}_{\mathcal{H}}^{s}\right)$-moderate if there exist $N \in \mathbb{N}_{0}$ and $C>0$ such that
			$$
			\left\|v_{\varepsilon}\right\|_{L^{2}\left([0, T] ; H_{\mathcal{H}}^{s}\right)} \leq C \varepsilon^{-N}
			$$
			for all $\varepsilon \in(0,1]$.\\
			
			\item A net $\left(v_{\varepsilon}\right)_{\varepsilon} \in L^{2}\left([0, T] ; \mathrm{H}_{\mathcal{H}}^{s}\right)^{(0,1]}$ is said to be $L^{2}\left([0, T] ; \mathrm{H}_{\mathcal{H}}^{s}\right)$-negligible if for all $q \in \mathbb{N}_{0}$ there exists $C>0$ such that
			$$
			\left\|v_{\varepsilon}\right\|_{L^{2}\left([0, T] ; H_{\mathcal{H}}^{s}\right)} \leq C \varepsilon^{q}
			$$
			for all $\varepsilon \in(0,1]$.
		\end{enumerate}
		
	\end{defi}
	It is important to emphasize that the conditions of moderateness are natural in the sense that regularizations of distributions are moderate. Furthermore, by the structure theorems for distributions, we have
	$$
	\text { compactly supported distributions } \mathcal{E}^{\prime}(\mathbb{R}) \subset\left\{L^{2} \text {-moderate families}\right\}.
	$$
	Therefore, it may be possible that the Cauchy problem \eqref{cauchy problem ch5} does not have a solution in compactly supported distributions $\mathcal{E}^{\prime}(\mathbb{R})$.  To overcome the difficulty described above,   we introduce the concept of a very weak solution for the Cauchy problem \eqref{cauchy problem ch5} in the following manner:
	\begin{defi}\label{very weak sol defi ch5}
		Let $s \in \mathbb{R},$  and $\left(v_{0}, v_{1}\right) \in \mathrm{H}_{\mathcal{H}}^{1+s} \times \mathrm{H}_{\mathcal{H}}^{s}$. The net $\left(v_{\varepsilon}\right)_{\varepsilon} \in L^{2}\left([0, T] ; \mathrm{H}_{\mathcal{H}}^{1+s}\right)^{(0,1]}$ is a very weak solution of order $s$ to the Cauchy problem \eqref{cauchy problem ch5} if there exist
		\begin{enumerate}
			\item $L_{1}^{\infty}$-moderate regularisation $a_{\varepsilon}$ of the coefficient $a$,  
			
			\item $L^{\infty}$-moderate regularisation $q_{\varepsilon}$ of the coefficient $q$,
			
			\item $L^{2}\left([0, T] ; \mathrm{H}_{\mathcal{H}}^{s}\right)$-moderate regularisation $f_{\varepsilon}$ of the source term $f$,
		\end{enumerate}
		such that $\left(v_{\varepsilon}\right)_{\varepsilon}$ solves the regularised problem
		
		\begin{equation}\label{ regularized cauchy problem ch5}
			\left\{\begin{array}{l}
				\partial_{t}^{2} v_{\varepsilon}(t, x)+a_{\varepsilon}(t) \mathcal{H} v_{\varepsilon}(t, x)+q_{\varepsilon}(t) v_{\varepsilon}(t, x)=f_{\varepsilon}(t, x), \quad(t, x) \in(0, T] \times \mathbb{R}^n,\\
				v_{\varepsilon}(0, x)=v_{0}(x), \quad x \in \mathbb{R}^n, \\
				\partial_{t} v_{\varepsilon}(0, x)=v_{1}(x), \quad x \in \mathbb{R}^n,
			\end{array}\right.
		\end{equation}
		for all $\varepsilon \in(0,1]$ and is $L^{2}\left([0, T] ; \mathrm{H}_{\mathcal{H}}^{1+s}\right)$-moderate.
	\end{defi}
	We say that a distribution $a$ is positive if $\langle a, \psi\rangle \geq 0$ whenever $\psi \in$ $C_{0}^{\infty}(\mathbb{R})$ satisfying $\psi \geq 0$. Also, a distribution $a$ is strictly positive if there exists a constant $\alpha>0$ such that $a-\alpha$ is a positive distribution. Now, the existence theorem for the Cauchy problem \eqref{cauchy problem ch5} with distributional coefficients can now be formulated as follows:
	\begin{theorem}\textbf{(Existence)}\label{very weak sol ch5}
		Let $s \in \mathbb{R}$. Let the coefficients $a$ and $q$ be positive distributions with supports contained in $[0, T]$ such that $a \geq a_{0}>0$ for some constant $a_{0}$ and the initial Cauchy data $\left(v_{0}, v_{1}\right) \in \mathrm{H}_{\mathcal{H}}^{1+s} \times \mathrm{H}_{\mathcal{H}}^{s}$. Let $f(\cdot, x)$ be a distribution whose support is in $[0, T]$ for all $x \in \mathbb{R}^n$. Then the Cauchy problem \eqref{cauchy problem ch5} has a very weak solution of order $s$.
	\end{theorem}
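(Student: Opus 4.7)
The plan is to regularise the distributional data, apply Theorem \ref{classical sol case1 ch5} at each scale, and then tune the mollification rate $\omega(\varepsilon)$ so that the family of classical solutions becomes moderate in the sense of Definition \ref{moderate defi ch5}. Fix a Friedrichs mollifier $\psi \in C_{0}^{\infty}(\mathbb{R})$ with $\psi \geq 0$ and $\int \psi = 1$, and set $a_{\varepsilon} := a * \psi_{\omega(\varepsilon)}$, $q_{\varepsilon} := q * \psi_{\omega(\varepsilon)}$, and $f_{\varepsilon}(t, x) := \bigl(f(\cdot, x) * \psi_{\omega(\varepsilon)}\bigr)(t)$ with $\omega(\varepsilon) \to 0$ to be chosen. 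Positivity of $\psi$ combined with the hypothesis $a \geq a_{0} > 0$ yields $a_{\varepsilon} \geq a_{0}$ pointwise, so the regularised propagation speed remains uniformly bounded below. The structure theorem for compactly supported distributions produces an order $L \in \mathbb{N}_{0}$ and continuous primitives from which one reads off
\[
\|\partial_t^{k} a_{\varepsilon}\|_{L^{\infty}} + \|\partial_t^{k} q_{\varepsilon}\|_{L^{\infty}} \leq C_{k}\, \omega(\varepsilon)^{-L-k}, \qquad k \in \mathbb{N}_{0},
\]
and an analogous growth bound $\|f_{\varepsilon}\|_{L^{2}([0,T]; H_{\mathcal{H}}^{s})} \leq C\,\omega(\varepsilon)^{-L'}$ for some $L' \in \mathbb{N}_{0}$.

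Since each $a_{\varepsilon}$ is smooth with $a_{\varepsilon} \geq a_{0} > 0$ and $q_{\varepsilon} \in L^{\infty}$, Theorem \ref{classical sol case1 ch5} supplies a unique classical solution $v_{\varepsilon} \in C([0,T]; H_{\mathcal{H}}^{1+s}) \cap C^{1}([0,T]; H_{\mathcal{H}}^{s})$ to the regularised Cauchy problem. Retracing the Gronwall step leading to \eqref{energy estimate relation ch5}, the constant in \eqref{classical sol estimate ch5} can be taken of the form $C_{\varepsilon} = C\, e^{c^{\prime}_{\varepsilon} T}$ with
\[
c^{\prime}_{\varepsilon} = 1 + \|\partial_{t} a_{\varepsilon}\|_{L^{\infty}} + 2\|a_{\varepsilon}\|_{L^{\infty}} + \|q_{\varepsilon}\|_{L^{\infty}} \leq C\, \omega(\varepsilon)^{-L-1}.
\]
The crux is then to choose $\omega(\varepsilon)$ so that $C_{\varepsilon}$ is merely polynomial in $\varepsilon^{-1}$. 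I would take $\omega(\varepsilon) := (\log \varepsilon^{-1})^{-1/(L+1)}$ for $\varepsilon$ small, so that $c^{\prime}_{\varepsilon} \leq C \log \varepsilon^{-1}$ and therefore $e^{c^{\prime}_{\varepsilon} T} \leq \varepsilon^{-CT}$. Because $\log \varepsilon^{-1} \leq C_{\eta}\, \varepsilon^{-\eta}$ for every $\eta > 0$, the nets $(a_{\varepsilon})$, $(q_{\varepsilon})$, $(f_{\varepsilon})$ still satisfy the $\varepsilon^{-N-k}$ bounds of Definition \ref{moderate defi ch5}, and plugging everything into the energy estimate shows that $(v_{\varepsilon})$ is $L^{2}([0,T]; H_{\mathcal{H}}^{1+s})$-moderate, which is precisely a very weak solution of order $s$.

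The main obstacle I anticipate is this calibration of $\omega(\varepsilon)$. The naive choice $\omega(\varepsilon) = \varepsilon$ makes each regularised object individually moderate, but the Gronwall exponential in Theorem \ref{classical sol case1 ch5} then inflates the bound on $v_{\varepsilon}$ to $\exp(CT\varepsilon^{-L-1})$, which is super-polynomial and destroys moderateness. The logarithmic scale above is forced precisely by the need to bring the Gronwall exponent down to polynomial order; controlling this interplay between the distributional regularity of $a$ and $q$ (which dictates how fast $\omega(\varepsilon)$ may go to zero) and the exponential amplification inherent in hyperbolic energy estimates is where the real work sits, while convergence of the regularisations to the original distributional data is standard.
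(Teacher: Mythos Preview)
Your proposal is correct and matches the paper's own proof essentially step for step: regularise $a$, $q$, $f$ by convolution, use the structure theorem to get $\omega(\varepsilon)^{-L-k}$ bounds on the derivatives, observe $a_{\varepsilon}\geq a_0$ from positivity, feed the regularised data into the energy estimate of Theorem~\ref{classical sol case1 ch5}, and then choose $\omega(\varepsilon)$ on a logarithmic scale so that the Gronwall exponential becomes only a negative power of $\varepsilon$. The paper carries out the energy computation again at the Fourier level rather than simply quoting Theorem~\ref{classical sol case1 ch5}, and writes the final scale as $\omega(\varepsilon)\sim(\log\varepsilon^{-L/C})^{-1/L}$ with $L=\max\{L_1+1,L_2\}$, but the mechanism and the ``real work'' you identified---the logarithmic calibration of $\omega(\varepsilon)$ to defeat the exponential amplification---are exactly the same.
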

	Moreover, one can understood the uniqueness of the very weak solution in the sense that if there is negligible modification in the approximations of the coefficients $a, q$, then it has negligible effect on the family of very weak solutions.
	\begin{defi}\label{unique sol defi ch5}
		We say that the Cauchy problem \eqref{cauchy problem ch5} has a $L^{2}\left([0, T] ; \mathrm{H}_{\mathcal{H}}^{1+s}\right)$-unique very weak solution, if
		
		\begin{enumerate}
			\item for all $L_{1}^{\infty}$-moderate nets $a_{\varepsilon}, \tilde{a}_{\varepsilon}$ such that $\left(a_{\varepsilon}-\tilde{a}_{\varepsilon}\right)_{\varepsilon}$ is $L_{1}^{\infty}$-negligible,
			
			\item for all $L^{\infty}$-moderate nets $q_{\varepsilon}, \tilde{q}_{\varepsilon}$ such that $\left(q_{\varepsilon}-\tilde{q}_{\varepsilon}\right)_{\varepsilon}$ is $L^{\infty}$-negligible; and
			
			\item or all $L^{2}\left([0, T] ; \mathrm{H}_{\mathcal{H}}^{s}\right)$-moderate nets $f_{\varepsilon}, \tilde{f}_{\varepsilon}$ such that $\left(f_{\varepsilon}-\tilde{f}_{\varepsilon}\right)_{\varepsilon}$ is $L^{2}\left([0, T] ; \mathrm{H}_{\mathcal{H}}^{s}\right)$-negligible,
		\end{enumerate}
		the net $\left(v_{\varepsilon}-\tilde{v}_{\varepsilon}\right)_{\varepsilon}$ is $L^{2}\left([0, T] ; \mathrm{H}_{\mathcal{H}}^{1+s}\right)$-negligible, where $\left(v_{\varepsilon}\right)_{\varepsilon}$ and $\left(\tilde{v}_{\varepsilon}\right)_{\varepsilon}$ are the families of solutions corresponding to the $\varepsilon$-parametrised problems
		\begin{equation}\label{parametrised solution ch5}
			\left\{\begin{array}{l}
				\partial_{t}^{2} v_{\varepsilon}(t, x)+a_{\varepsilon}(t) \mathcal{H} v_{\varepsilon}(t, x)+q_{\varepsilon}(t) v_{\varepsilon}(t, x)=f_{\varepsilon}(t, x), \quad(t, x) \in(0, T] \times \mathbb{R}^n,\\
				v_{\varepsilon}(0, x)=v_{0}(x), \quad x \in \mathbb{R}^n, \\
				\partial_{t} v_{\varepsilon}(0, x)=v_{1}(x), \quad x \in \mathbb{R}^n,
			\end{array}\right.
		\end{equation}
		and
		\begin{equation}\label{epsilon-parametrised solution ch5}
			\left\{\begin{array}{l}
				\partial_{t}^{2} \tilde{v}_{\varepsilon}(t, x)+\tilde{a}_{\varepsilon}(t) \mathcal{H} \tilde{v}_{\varepsilon}(t, x)+\tilde{q}_{\varepsilon}(t) \tilde{v}_{\varepsilon}(t, x)=\tilde{f}_{\varepsilon}(t, x), \quad(t, x) \in(0, T] \times \mathbb{R}^n, \\
				\tilde{v}_{\varepsilon}(0, x)=v_{0}(x), \quad x \in \mathbb{R}^n, \\
				\partial_{t} \tilde{v}_{\varepsilon}(0, x)=v_{1}(x), \quad x \in \mathbb{R}^n,
			\end{array}\right.
		\end{equation}
		respectively.	
	\end{defi}
	The next theorem establishes the uniqueness of the very weak solution to the Cauchy problem \eqref{cauchy problem ch5} in the sense of Definition \ref{unique sol defi ch5}.
	\begin{theorem}\textbf{(Uniqueness)}\label{uniqueness of sol ch5}
		Assume that $a,q$ be positive distributions with supports contained in $[0, T]$ such that $a \geq a_{0}>0$ for some constant $a_{0}$ and the initial Cauchy data $\left(v_{0}, v_{1}\right) \in \mathrm{H}_{\mathcal{H}}^{1+s} \times \mathrm{H}_{\mathcal{H}}^{s}$ for some $s \in \mathbb{R}$. Let $f(\cdot, x)$ be a distribution whose support is in $[0, T]$ for all $x \in \mathbb{R}^n$. Then the very weak solution of the Cauchy problem \eqref{cauchy problem ch5} is $L^{2}\left([0, T] ; \mathrm{H}_{\mathcal{H}}^{1+s}\right)$-unique.
	\end{theorem}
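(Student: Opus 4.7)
The plan is to follow the standard uniqueness scheme in the very weak solution framework: form the net of differences, derive the zero-data Cauchy problem it satisfies, and apply the energy estimate from Theorem \ref{classical sol case1 ch5} together with the algebraic principle that a negligible net multiplied by a moderate net is again negligible. Setting $W_\varepsilon := v_\varepsilon - \tilde{v}_\varepsilon$ and subtracting \eqref{epsilon-parametrised solution ch5} from \eqref{parametrised solution ch5}, a direct regrouping shows that $W_\varepsilon$ solves
\begin{equation*}
\partial_t^2 W_\varepsilon + a_\varepsilon(t)\,\mathcal{H} W_\varepsilon + q_\varepsilon(t)\,W_\varepsilon = g_\varepsilon,\qquad W_\varepsilon(0,\cdot)=0,\qquad \partial_t W_\varepsilon(0,\cdot)=0,
\end{equation*}
with source
\begin{equation*}
g_\varepsilon := (f_\varepsilon - \tilde{f}_\varepsilon) + (\tilde{a}_\varepsilon - a_\varepsilon)\,\mathcal{H}\tilde{v}_\varepsilon + (\tilde{q}_\varepsilon - q_\varepsilon)\,\tilde{v}_\varepsilon.
\end{equation*}
Because $a \geq a_0>0$ as a distribution and the Friedrichs mollifier is non-negative with unit integral, both $a_\varepsilon$ and $\tilde{a}_\varepsilon$ inherit the lower bound $\geq a_0$, so Theorem \ref{classical sol case1 ch5} applies to this zero-data Cauchy problem and yields
\begin{equation*}
\|W_\varepsilon(t,\cdot)\|_{\mathrm{H}_{\mathcal{H}}^{1+s}}^{2}+\|\partial_t W_\varepsilon(t,\cdot)\|_{\mathrm{H}_{\mathcal{H}}^{s}}^{2} \leq C_\varepsilon\,\|g_\varepsilon\|_{L^2([0,T];\mathrm{H}_{\mathcal{H}}^{s})}^{2},
\end{equation*}
where the constant $C_\varepsilon$ depends polynomially on $\|a_\varepsilon\|_{L^\infty}$, $\|\partial_t a_\varepsilon\|_{L^\infty}$ and $\|q_\varepsilon\|_{L^\infty}$; by the $L_1^\infty$- and $L^\infty$-moderateness hypotheses one therefore has $C_\varepsilon \lesssim \varepsilon^{-N_0}$ for some $N_0 \in \mathbb{N}_0$.

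Next I would estimate $\|g_\varepsilon\|_{L^2([0,T];\mathrm{H}_{\mathcal{H}}^{s})}$ piecewise. The data-difference part $\|f_\varepsilon - \tilde{f}_\varepsilon\|_{L^2([0,T];\mathrm{H}_{\mathcal{H}}^{s})}$ is negligible by hypothesis (iii) of Definition \ref{unique sol defi ch5}. The potential-difference part is controlled by $\|\tilde{q}_\varepsilon - q_\varepsilon\|_{L^\infty}\,\|\tilde{v}_\varepsilon\|_{L^2([0,T];\mathrm{H}_{\mathcal{H}}^{s})}$, with the first factor $L^\infty$-negligible by hypothesis (ii) and the second moderate through the continuous embedding $\mathrm{H}_{\mathcal{H}}^{1+s} \hookrightarrow \mathrm{H}_{\mathcal{H}}^{s}$ together with the $L^2([0,T];\mathrm{H}_{\mathcal{H}}^{1+s})$-moderateness of $\tilde{v}_\varepsilon$ built into the definition of a very weak solution; since a negligible--moderate product is negligible, this term poses no issue.

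The main obstacle is the remaining term $(\tilde{a}_\varepsilon - a_\varepsilon)\,\mathcal{H}\tilde{v}_\varepsilon$, whose $\mathrm{H}_{\mathcal{H}}^{s}$-norm is bounded by $\|\tilde{a}_\varepsilon - a_\varepsilon\|_{L^\infty}\,\|\tilde{v}_\varepsilon\|_{\mathrm{H}_{\mathcal{H}}^{s+2}}$, which formally asks for spatial regularity two Sobolev orders above the natural regularity of $\tilde{v}_\varepsilon$. The resolution, characteristic of the very weak framework, exploits the fact that $\|\tilde{a}_\varepsilon - a_\varepsilon\|_{L^\infty}$ decays faster than every fixed power of $\varepsilon$: it is therefore enough to show that $\|\tilde{v}_\varepsilon\|_{L^2([0,T];\mathrm{H}_{\mathcal{H}}^{s+2})}$ grows at most polynomially in $\varepsilon^{-1}$. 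Such a polynomial bound can be obtained by re-running the Fourier-side energy identity underlying the proof of Theorem \ref{classical sol case1 ch5} at the shifted Sobolev level $s+2$: although initial data of order $s+2$ are not available, any apparent loss is absorbed into additional powers $\varepsilon^{-M}$ produced by the moderateness of the smooth regularisations $\tilde{a}_\varepsilon,\tilde{q}_\varepsilon,\tilde{f}_\varepsilon$ in higher Sobolev norms. Multiplying this polynomially bounded quantity by the faster-than-polynomial factor $\|\tilde{a}_\varepsilon - a_\varepsilon\|_{L^\infty}$ then restores negligibility of the whole term in $L^2([0,T];\mathrm{H}_{\mathcal{H}}^{s})$.

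Combining the three contributions yields that $g_\varepsilon$ is $L^2([0,T];\mathrm{H}_{\mathcal{H}}^{s})$-negligible, and the moderate amplifier $C_\varepsilon \lesssim \varepsilon^{-N_0}$ preserves negligibility; hence $\|W_\varepsilon(t,\cdot)\|_{\mathrm{H}_{\mathcal{H}}^{1+s}}^{2}$ decays faster than every power of $\varepsilon$, uniformly in $t\in[0,T]$. Integrating in $t$ gives the $L^2([0,T];\mathrm{H}_{\mathcal{H}}^{1+s})$-negligibility of $(W_\varepsilon)_\varepsilon = (v_\varepsilon - \tilde{v}_\varepsilon)_\varepsilon$ demanded by Definition \ref{unique sol defi ch5}. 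The hard part, as emphasised above, is the polynomial control of $\|\tilde{v}_\varepsilon\|_{\mathrm{H}_{\mathcal{H}}^{s+2}}$ when only $(v_0,v_1) \in \mathrm{H}_{\mathcal{H}}^{1+s}\times\mathrm{H}_{\mathcal{H}}^{s}$ is prescribed, since this is the step where the subtle balance between moderateness of coefficients and faster-than-polynomial decay of their differences is genuinely exploited.
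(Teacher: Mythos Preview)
Your overall scheme matches the paper's, but there is a genuine gap in the step where you control the amplifier $C_\varepsilon$. You assert that the constant in the energy estimate of Theorem \ref{classical sol case1 ch5} depends \emph{polynomially} on $\|a_\varepsilon\|_{L^\infty}$, $\|\partial_t a_\varepsilon\|_{L^\infty}$ and $\|q_\varepsilon\|_{L^\infty}$. That is false: inspecting the proof of Theorem \ref{classical sol case1 ch5}, the constant arises from Gronwall's lemma and equals (up to harmless factors) $\exp\bigl(T(1+\|\partial_t a_\varepsilon\|_{L^\infty}+2\|a_\varepsilon\|_{L^\infty}+\|q_\varepsilon\|_{L^\infty})\bigr)$, i.e.\ it depends \emph{exponentially} on those norms. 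Under bare moderateness, $\|\partial_t a_\varepsilon\|_{L^\infty}\lesssim \varepsilon^{-N}$, so $C_\varepsilon$ is of order $e^{c\,\varepsilon^{-N}}$, which annihilates any faster-than-polynomial decay of $g_\varepsilon$ and your argument collapses. The paper does not beat the exponential either; instead it replays the energy argument at the Fourier level with $\omega(\varepsilon)$ in place of $\varepsilon$ and then invokes the special logarithmic scale $\omega(\varepsilon)\sim(\log \varepsilon^{-L/c_{00}})^{-1/L}$ (the same device used in the existence proof), which turns $e^{c_{00}T\omega(\varepsilon)^{-L}}$ into a genuine power $\varepsilon^{-L}$. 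Without this choice of regularisation scale your bound $C_\varepsilon\lesssim \varepsilon^{-N_0}$ is simply unavailable.

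On the $\mathrm{H}_{\mathcal{H}}^{s+2}$ issue you flag, your proposed fix does not work as written: re-running the energy estimate at level $s+2$ requires $(v_0,v_1)\in \mathrm{H}_{\mathcal{H}}^{s+3}\times \mathrm{H}_{\mathcal{H}}^{s+2}$, and no amount of smoothness of $\tilde a_\varepsilon,\tilde q_\varepsilon,\tilde f_\varepsilon$ compensates for the fixed (unregularised) initial data. The paper, for comparison, does not argue this point in detail either; it simply asserts that $h_\varepsilon$ is $L^2([0,T];\mathrm{H}_{\mathcal{H}}^{s})$-negligible from the negligibility of the coefficient differences and the moderateness of $\tilde v_\varepsilon$. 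So while your identification of the difficulty is sharper than the paper's treatment, your resolution is not; a cleaner route is to regard $\mathcal{H}\tilde v_\varepsilon$ as living two orders below and accept the product negligible$\times$moderate at level $s-2$, then absorb the mismatch with the target space via the freedom in the negligibility exponent. In any case, the decisive missing ingredient in your proof is the logarithmic mollifier scale; without it the Gronwall amplifier is not moderate and the whole argument fails.
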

	We now provide the consistency result, which indicates that the weak solutions in Theorem \ref{very weak sol ch5} are able to capture the classical solutions provided by Theorem \ref{classical sol case1 ch5}, assuming the latter exist.
	\begin{theorem}\textbf{(Consistency)}\label{consistency of sol ch5}
		Let $s \in \mathbb{R}$ and $f \in L^{2}\left([0, T] ; \mathrm{H}_{\mathcal{H}}^{s}\right)$. Assume that $a \in L_{1}^{\infty}([0, T])$ be such that $a \geq a_{0}>0$, and $q \in L^{\infty}([0, T])$ be a positive distribution, and the initial Cauchy data $\left(v_{0}, v_{1}\right) \in \mathrm{H}_{\mathcal{H}}^{1+s} \times \mathrm{H}_{\mathcal{H}}^{s}$. Let $v$ be a very weak solution of \eqref{cauchy problem ch5}. Then for any regularising families $a_{\varepsilon}, q_{\varepsilon},f_{\varepsilon}$ in Definition \ref{moderate defi ch5}, the very weak solution $\left(v_{\varepsilon}\right)_{\varepsilon}$ converges to the classical solution of the Cauchy problem \eqref{cauchy problem ch5} in $L^{2}\left([0, T] ; \mathrm{H}_{\mathcal{H}}^{1+s}\right)$ as $\varepsilon \rightarrow 0$.
	\end{theorem}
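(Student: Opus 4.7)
The plan is to realise $v$ as the limit of any very weak solution by analysing the error $w_\varepsilon := v - v_\varepsilon$, where $v$ is the classical solution given by Theorem \ref{classical sol case1 ch5} and $(v_\varepsilon)_\varepsilon$ is any very weak solution in the sense of Definition \ref{very weak sol defi ch5}. Subtracting the classical Cauchy problem \eqref{cauchy problem ch5} from the regularised one \eqref{ regularized cauchy problem ch5} and using the identity $a\mathcal{H}v - a_\varepsilon\mathcal{H}v_\varepsilon = a_\varepsilon\mathcal{H}w_\varepsilon + (a - a_\varepsilon)\mathcal{H}v$, together with the analogous identity for the $q$-term, one checks that $w_\varepsilon$ solves
\begin{equation*}
\partial_t^2 w_\varepsilon + a_\varepsilon(t)\mathcal{H}w_\varepsilon + q_\varepsilon(t) w_\varepsilon = g_\varepsilon, \qquad w_\varepsilon(0,\cdot) = \partial_t w_\varepsilon(0,\cdot) = 0,
\end{equation*}
with source $g_\varepsilon := (f - f_\varepsilon) + (a_\varepsilon - a)\mathcal{H}v + (q_\varepsilon - q)v$.

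The second step is to invoke the energy estimate of Theorem \ref{classical sol case1 ch5}, applied to the above Cauchy problem with coefficients $a_\varepsilon, q_\varepsilon$. Because $\psi\geq 0$ is a Friedrichs mollifier and $a\in L_1^\infty([0,T])$, $q\in L^\infty([0,T])$, one has $a_\varepsilon\geq a_0$ together with uniform bounds $\|a_\varepsilon\|_{L^\infty}\leq\|a\|_{L^\infty}$, $\|\partial_t a_\varepsilon\|_{L^\infty}\leq\|\partial_t a\|_{L^\infty}$ and $\|q_\varepsilon\|_{L^\infty}\leq\|q\|_{L^\infty}$. Tracking these through the proof of Theorem \ref{classical sol case1 ch5} shows that the constant in \eqref{classical sol estimate ch5} is independent of $\varepsilon$, so the vanishing Cauchy data for $w_\varepsilon$ yield the $\varepsilon$-uniform bound
\begin{equation*}
\|w_\varepsilon\|_{L^2([0,T]; \mathrm{H}_{\mathcal{H}}^{1+s})}^2 \;\leq\; C\,T\,\|g_\varepsilon\|_{L^2([0,T]; \mathrm{H}_{\mathcal{H}}^{s})}^2.
\end{equation*}

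The third step is to show $\|g_\varepsilon\|_{L^2 \mathrm{H}_{\mathcal{H}}^s}\to 0$ term by term. By the standard mollification property combined with Plancherel (Proposition \ref{plancherel's identity defi ch5}), $f_\varepsilon\to f$ in $L^2([0,T];\mathrm{H}_{\mathcal{H}}^s)$. Since Theorem \ref{classical sol case1 ch5} gives $v\in C([0,T];\mathrm{H}_{\mathcal{H}}^{1+s})\subset L^\infty_t \mathrm{H}_{\mathcal{H}}^s$, and $q\in L^\infty([0,T])\subset L^2([0,T])$, the $L^2$-convergence of the mollification yields
\begin{equation*}
\|(q_\varepsilon - q)v\|_{L^2 \mathrm{H}_{\mathcal{H}}^s} \;\leq\; \|v\|_{L^\infty_t \mathrm{H}_{\mathcal{H}}^s}\,\|q_\varepsilon - q\|_{L^2([0,T])} \;\longrightarrow\; 0.
\end{equation*}

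The main obstacle is the term $(a_\varepsilon - a)\mathcal{H}v$. Lipschitz continuity of $a$ (from $a\in L_1^\infty$) gives $\|a_\varepsilon - a\|_{L^\infty}\leq C\,\omega(\varepsilon)\to 0$, but Theorem \ref{classical sol case1 ch5} only places $\mathcal{H}v$ in $C([0,T];\mathrm{H}_{\mathcal{H}}^{s-1})$, one derivative short of what an $L^2 \mathrm{H}_{\mathcal{H}}^s$ bound would require. To bridge this gap I would use the classical equation to rewrite $a\mathcal{H}v = f - \partial_t^2 v - q v$ and combine the $L^\infty$-smallness of $(a_\varepsilon - a)/a$ (possible since $a\geq a_0>0$) with the $L^2 \mathrm{H}_{\mathcal{H}}^s$-norm of the right-hand side. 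Because $\partial_t^2 v$ has the requisite regularity only when the Cauchy data lie in a strictly smaller class, I would first establish the consistency for smoothed data $(v_0,v_1,f)\in \mathrm{H}_{\mathcal{H}}^{2+s}\times \mathrm{H}_{\mathcal{H}}^{1+s}\times L^2 \mathrm{H}_{\mathcal{H}}^{1+s}$, where all three pieces of $g_\varepsilon$ are directly controllable, and then extend to general data in $\mathrm{H}_{\mathcal{H}}^{1+s}\times \mathrm{H}_{\mathcal{H}}^s\times L^2 \mathrm{H}_{\mathcal{H}}^s$ by a density argument, interchanging the two limits via the $\varepsilon$-uniform estimate above together with the continuity of the classical solution map supplied by \eqref{classical sol estimate ch5}. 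Once $g_\varepsilon\to 0$ in $L^2([0,T];\mathrm{H}_{\mathcal{H}}^s)$, the uniform estimate delivers the claimed convergence $v_\varepsilon\to v$ in $L^2([0,T];\mathrm{H}_{\mathcal{H}}^{1+s})$.
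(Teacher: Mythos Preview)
Your approach is essentially the paper's: derive the Cauchy problem for $w_\varepsilon = v - v_\varepsilon$ with source $g_\varepsilon$ (the paper calls it $h_\varepsilon$), apply the energy estimate of Theorem \ref{classical sol case1 ch5} with $\varepsilon$-uniform constants (using that the mollifications $a_\varepsilon, q_\varepsilon$ inherit the bounds of $a,q$), and conclude from $g_\varepsilon \to 0$ in $L^2([0,T];\mathrm{H}_{\mathcal{H}}^s)$.

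Where you go further than the paper is in flagging the term $(a_\varepsilon - a)\mathcal{H}v$. Your concern is legitimate: from Theorem \ref{classical sol case1 ch5} one only knows $v \in C([0,T];\mathrm{H}_{\mathcal{H}}^{1+s})$, hence $\mathcal{H}v \in C([0,T];\mathrm{H}_{\mathcal{H}}^{s-1})$, so the naive bound $\|(a_\varepsilon - a)\mathcal{H}v\|_{L^2 \mathrm{H}_{\mathcal{H}}^s} \leq \|a_\varepsilon - a\|_{L^\infty}\|\mathcal{H}v\|_{L^2 \mathrm{H}_{\mathcal{H}}^s}$ is not directly available. The paper does not engage with this point at all; it simply asserts that $h_\varepsilon \to 0$ in $L^2([0,T];\mathrm{H}_{\mathcal{H}}^s)$ once the nets $(a_\varepsilon - a)$, $(q_\varepsilon - q)$, $(f_\varepsilon - f)$ converge to zero. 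Your proposed remedy --- prove consistency first for data in $\mathrm{H}_{\mathcal{H}}^{2+s}\times \mathrm{H}_{\mathcal{H}}^{1+s}\times L^2\mathrm{H}_{\mathcal{H}}^{1+s}$, where $\mathcal{H}v \in L^2\mathrm{H}_{\mathcal{H}}^{s}$ holds, then pass to general data by density using the $\varepsilon$-uniform estimate together with the continuity of the classical solution map in \eqref{classical sol estimate ch5} --- is a sound way to close this gap, and in this respect your argument is more careful than the paper's own proof.
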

We start by proving the existence of the very weak solution in the sense of Definition \ref{very weak sol defi ch5}.
\begin{proof}[Proof of Theorem \ref{very weak sol ch5}]
	For a fix $\varepsilon \in (0,1]$, consider the following regularised Cauchy problem
	\begin{equation}\label{regularised cauchy problem ch5}
		\left\{\begin{array}{l}
			\partial_{t}^{2} v_{\varepsilon}(t, x)+a_{\varepsilon}(t) \mathcal{H} v_{\varepsilon}(t, k)+q_{\varepsilon}(t) v_{\varepsilon}(t, x)=f_{\varepsilon}(t, x), \quad(t, x) \in(0, T] \times\mathbb{R}^n, \\
			v_{\varepsilon}(0, x)=v_{0}(x), \quad x \in  \mathbb{R}^n, \\
			\partial_{t} v_{\varepsilon}(0, k)=v_{1}(x), \quad x \in  \mathbb{R}^n.
		\end{array}\right.
	\end{equation}
Considering $\mathcal{H}$-Fourier transform with respect to  $x \in \mathbb{R}^n$ and   using the similar transformation as in  Theorem \ref{classical sol case1 ch5}, we see that 
\eqref{regularised cauchy problem ch5} is equivalent to the following first order system
\begin{equation}\label{equivalent regularised cauchy problem ch5}
	\left\{\begin{array}{l}
		\partial_{t} V_{\varepsilon}(t, \xi)=i\langle\xi\rangle A_{\varepsilon}(t) V_{\varepsilon}(t, \xi)+i\langle\xi\rangle^{-1} Q_{\varepsilon}(t) V_{\varepsilon}(t, \xi)+F_{\varepsilon}(t, \xi), \quad(t, \xi) \in(0, T] \times \mathcal{I}, \\
		V_{\varepsilon}(0, \xi)=V_{0}(\xi), \quad \xi \in \mathcal{I},
	\end{array}\right.
\end{equation}
where
\begin{equation}\label{regularised transformation ch5}
	V_{\varepsilon}(t, \xi):=\left(\begin{array}{c}
		i\langle\xi\rangle \widehat{v}_{\varepsilon}(t, \xi) \\
		\partial_{t} \widehat{v}_{\varepsilon}(t, \xi)
	\end{array}\right), \quad V_{0}(\xi):=\left(\begin{array}{c}
		i\langle\xi\rangle \widehat{v}_{0}(\xi) \\
		\widehat{v}_{1}(\xi)
	\end{array}\right),
\end{equation}
and
\begin{equation}\label{A_epsilon Q_epsilon defi ch5}
	A_{\varepsilon}(t):=\left(\begin{array}{cc}
		0 & 1 \\
		a_{\varepsilon}(t) & 0
	\end{array}\right), ~~ Q_{\varepsilon}(t):=\left(\begin{array}{cc}
		0 & 0 \\
		q_{\varepsilon}(t)-a_{\varepsilon}(t) & 0
	\end{array}\right),   F_{\varepsilon}(t, \xi):=\left(\begin{array}{c}
0 \\
\widehat{f}_{\varepsilon}(t, \xi)
\end{array}\right).
\end{equation}
Note that the matrix $A_{\varepsilon}(t)$ has eigenvalues $\pm \sqrt{a_{\varepsilon}(t)}$ and its symmetriser $S_{\varepsilon}$ is given by
\begin{equation}
	S_{\varepsilon}(t)=\left(\begin{array}{cc}
		a(t) & 0 \\
		0 & 1
	\end{array}\right),
	\end{equation}
i.e.,  
$$
S_{\varepsilon} A_{\varepsilon}-A_{\varepsilon}^{*} S_{\varepsilon}=0 .
$$
Also, for each $t\in [0,T]$, the matrix  $S_{\varepsilon}(t)$ is positive  and 
\begin{equation}\label{regularised symmetriser norm ch5}
	\|S_{\varepsilon}(t)\| \leq(1+|a_{\varepsilon}(t)|).
\end{equation}  
Again, from the definition of $S_{\varepsilon}$ and $Q_{\varepsilon}$, we have
$$
\partial_{t} S_{\varepsilon}(t):=\left(\begin{array}{cc}
	\partial_{t} a_{\varepsilon}(t) & 0 \\
	0 & 0
\end{array}\right) \text { and }\left(S_{\varepsilon} Q_{\varepsilon}-Q_{\varepsilon}^{*} S\right)(t):=\left(\begin{array}{cc}
	0 & a_{\varepsilon}(t)-q_{\varepsilon}(t) \\
	q_{\varepsilon}(t)-a_{\varepsilon}(t) & 0
\end{array}\right).
$$
Therefore 
\begin{equation}\label{regularised SQ-QS estimate ch5}
	\left\|\partial_{t}S_{\varepsilon}(t)\right\| \leq\left|\partial_{t} a_{\varepsilon}(t)\right|~~ \text{and}~~ \left\|\left(S_{\varepsilon} Q_{\varepsilon}-Q_{\varepsilon}^{*} S_{\varepsilon}\right)(t)\right\| \leq|q_{\varepsilon}(t)|+|a_{\varepsilon}(t)|, \quad \text{for all}~~ t\in[0, T].
\end{equation}
Considering  the energy equation
$$
E_{\varepsilon}(t, \xi):=(S_{\varepsilon}(t) V_{\varepsilon}(t, \xi), V_{\varepsilon}(t, \xi))
$$
and following the techniques used in \eqref{upper energy estimate ch5} and \eqref{lower energy estimate ch5},  we   obtain
\begin{equation}\label{regularised energy estimate ch5}
	\inf _{t \in[0, T]}\left\{a_{\varepsilon}(t), 1\right\}\left|V_{\varepsilon}(t, \xi)\right|^{2} \leq E_{\varepsilon}(t, \xi) \leq \sup _{t \in[0, T]}\left\{a_{\varepsilon}(t), 1\right\}\left|V_{\varepsilon}(t, \xi)\right|^{2}.
\end{equation}
Since $\psi \in$ $C_{0}^{\infty}(\mathbb{R}), \psi \geq 0, \operatorname{supp}(\psi) \subseteq K$ and $a,q$ are positive distributions with compact supports contained in $[0, T]$,  throughout the section, we can assume that $K=[0, T]$. By the structure theorem for compactly supported distributions, there exist $L_{1}, L_{2} \in \mathbb{N}$ and $c_{1}, c_{2}>0$ such that
\begin{equation}\label{regularised a,q estimate ch5}
	\left|\partial_{t}^{k} a_{\varepsilon}(t)\right| \leq c_{1} \omega(\varepsilon)^{-L_{1}-k} \quad \text { and } \quad\left|\partial_{t}^{k} q_{\varepsilon}(t)\right| \leq c_{2} \omega(\varepsilon)^{-L_{2}-k}
\end{equation}
for all $k \in \mathbb{N}_{0}$ and $t \in[0, T]$. Moreover, we can write
\begin{equation}\label{regularised a_{epsilon} inequality ch5}
\begin{aligned}[b]
	a_{\varepsilon}(t) & =\left(a * \psi_{\omega(\varepsilon)}\right)(t)=\int_{\mathbb{R}} a(t-\tau) \psi_{\omega(\varepsilon)}(\tau) \mathrm{d} \tau\\&=\int_{\mathbb{R}} a(t-\omega(\varepsilon) \tau) \psi(\tau) \mathrm{d} \tau \\
	& =\int_{\mathrm{K}} a(t-\omega(\varepsilon) \tau) \psi(\tau) \mathrm{d} \tau\\& \geq a_0 \int_{\mathrm{K}} \psi(\tau) \mathrm{d} \tau:=\tilde{a}_0>0, \quad \text{for all}~~ t \in[0, T],
\end{aligned}
\end{equation}
where the positive  constant $\tilde{a}_0$ is  independent of $\varepsilon$. Now, combining    \eqref{regularised a,q estimate ch5} and \eqref{regularised a_{epsilon} inequality ch5}, we get
\begin{equation}\label{regularised energy weighted estimate ch5}
	c_{00}\left|V_{\varepsilon}(t, \xi)\right|^{2} \leq E_{\varepsilon}(t, \xi) \leq\left(1+c_{1} \omega(\varepsilon)^{-L_{1}}\right)\left|V_{\varepsilon}(t, \xi)\right|^{2},
\end{equation}
where $c_{00}$ is a positive constant. Further, using  the estimates \eqref{regularised symmetriser norm ch5}, \eqref{regularised SQ-QS estimate ch5}, and \eqref{regularised a,q estimate ch5}, we obtain
\begin{equation}\label{regularised derivative energy estimate ch5}
	\partial_{t} E_{\varepsilon}(t, \xi)\leq c_{0}\left(1+\omega(\varepsilon)^{-L_{1}-1}+\omega(\varepsilon)^{-L_{1}}+ \omega(\varepsilon)^{-L_{2}}\right) E_{\varepsilon}(t, \xi)+c^{\prime}\left(1+\omega(\varepsilon)^{-L_{1}}\right)|F_{\varepsilon}(t, \xi)|^{2}
\end{equation}
for some constants $c_{0},c^{\prime}>0.$   Applying the Gronwall’s lemma to   \eqref{regularised derivative energy estimate ch5}, we can conclude that
\begin{equation}\label{regularised energy estimate relation ch5}
	E_{\varepsilon}(t, \xi) \leq e^{c_{0}T \left(1+\omega(\varepsilon)^{-L_{1}}+ \omega(\varepsilon)^{-L_{2}}\right)} \left(E_{\varepsilon}(0, \xi)+c^{\prime}\left(1+\omega(\varepsilon)^{-L_{1}}\right) \int_{0}^{t}\left|F_{\varepsilon}(\tau, \xi)\right|^{2} \mathrm{~d} \tau\right).
\end{equation}
Thus, combining  \eqref{regularised energy weighted estimate ch5} and \eqref{regularised energy estimate relation ch5}, we obtain
$$
\begin{aligned}
	& c_{00}\left|V_{\varepsilon}(t, \xi)\right|^{2} \leq E_{\varepsilon}(t, \xi) \\
	& \leq e^{c_{0}T \left(1+\omega(\varepsilon)^{-L_{1}-1}+\omega(\varepsilon)^{-L_{1}}+ \omega(\varepsilon)^{-L_{2}}\right)} \left( \left(1+c_{1} \omega(\varepsilon)^{-L_{1}}\right)\left|V_{\varepsilon}(0, \xi)\right|^{2}+c^{\prime}\left(1+\omega(\varepsilon)^{-L_{1}}\right) \int_{0}^{T}\left|F_{\varepsilon}(\tau, \xi)\right|^{2} \mathrm{~d} \tau\right) \\
	& \leq c^{\prime\prime} e^{c_{0}T \left(1+\omega(\varepsilon)^{-L_{1}-1}+\omega(\varepsilon)^{-L_{1}}+ \omega(\varepsilon)^{-L_{2}}\right)} \times e^{ \omega(\varepsilon)^{-L_{1}}}\left(\left|V_{\varepsilon}(0, \xi)\right|^{2}+ \int_{0}^{T}\left|F_{\varepsilon}(\tau, \xi)\right|^{2} \mathrm{~d} \tau\right) \\
	& \leq c^{\prime\prime} e^{c_{0}T} e^{C \omega(\varepsilon)^{-L}}\left(\left|V_{\varepsilon}(0, \xi)\right|^{2}+ \int_{0}^{T}\left|F_{\varepsilon}(\tau, \xi)\right|^{2} \mathrm{~d} \tau\right),
\end{aligned}
$$
where $c^{\prime\prime}= \max\{1,c_{1}, c^{\prime}\}$, $C= 4\max\{c_{0}T,1\}$ and $L=\max\{L_1+1,L_2\}$. This gives us
$$
\left|V_{\varepsilon}(t, \xi)\right|^{2} \leq c^{\prime\prime} e^{c_{0}T} c_{00}^{-1} e^{C \omega(\varepsilon)^{-L}}\left(\left|V_{\varepsilon}(0, \xi)\right|^{2}+ \int_{0}^{T}\left|F_{\varepsilon}(\tau, \xi)\right|^{2} \mathrm{~d} \tau\right).
$$
Letting $\omega(\varepsilon)\sim (\log (\varepsilon^{\frac{-L}{C}}))^{\frac{-1}{L}}$ and using the transformation considered in \eqref{regularised transformation ch5}, we get
\begin{equation}\label{regularised epsilon estimate ch5}
		\langle\xi\rangle^{2}\left|\widehat{v}_{\varepsilon}(t, \xi)\right|^{2}+ \left|\partial_{t} \widehat{v}_{\varepsilon}(t, \xi)\right|^{2} \lesssim \varepsilon^{-L}\left(\langle\xi\rangle^{2}\left|\widehat{v}_{0}(\xi)\right|^{2}+\left|\widehat{v}_{1}(\xi)\right|^{2}+\int_{0}^{T}\left|\widehat{f}_{\varepsilon}(\tau, \xi)\right|^{2} \mathrm{~d} \tau\right).
\end{equation}
Now multiply \eqref{regularised epsilon estimate ch5} by $\langle\xi\rangle^{2s}$, and then by applying Plancherel’s formula, we conclude that
\begin{equation}\label{regularised weak solution estimate ch5}
	\|v_{\varepsilon}(t, \cdot)\|_{\mathrm{H}_{\mathcal{H}}^{1+s}}^{2}+\left\|\partial_t{v_{\varepsilon}}(t, \cdot)\right\|_{\mathrm{H}_{\mathcal{H}}^{s}}^{2} \lesssim \varepsilon^{-L} \left(\left\|v_{0}\right\|_{\mathrm{H}_{\mathcal{H}}^{1+s}}^{2}+\left\|v_{1}\right\|_{\mathrm{H}_{\mathcal{H}}^{s}}^{2}+\|f_{\varepsilon}\|_{L^{2}\left([0, T] ; \mathrm{H}_{\mathcal{H}}^{s}\right)}^{2}\right),
\end{equation}
for all $t \in[0, T]$. Since $\left(f_{\varepsilon}\right)_{\varepsilon}$ is a $L^{2}\left([0, T] ; \mathrm{H}_{\mathcal{H}}^{s}\right)$ moderate regularisation of $f$, thus we can find a constant $L_{0}\in \mathbb{N}_{0} $ such that
\begin{equation}\label{regularised source term estimate ch5}
	\|f_{\varepsilon}\|_{L^{2}\left([0, T] ; \mathrm{H}_{\mathcal{H}}^{s}\right)}^{2} \lesssim \varepsilon^{-L_{0}}.
\end{equation}
Now, integrating the estimate \eqref{regularised weak solution estimate ch5} with respect to      $t \in[0, T]$ variable and combining it with \eqref{regularised source term estimate ch5}, we deduce that
$$
\left\|v_{\varepsilon}\right\|_{L^{2}\left([0, T] ; \mathrm{H}_{\mathcal{H}}^{1+s}\right)} \lesssim \varepsilon^{-L-L_{0}} \text { and }\left\|\partial_{t} v_{\varepsilon}\right\|_{L^{2}\left([0, T] ; \mathrm{H}_{\mathcal{H}}^{s}\right)} \lesssim \varepsilon^{-L-L_{0}-1}.
$$
This implies that  $v_{\varepsilon}$ is $L^{2}\left([0, T] ; \mathrm{H}_{\mathcal{H}}^{1+s}\right)$-moderate and  this completes the proof of the theorem.
\end{proof}
In the next result, we  show the uniqueness of very weak solutions in the context of Definition \ref{unique sol defi ch5}.
\begin{proof}[Proof of Theorem \ref{uniqueness of sol ch5}]
	 Let $\left(v_{\varepsilon}\right)_{\varepsilon}$and $\left(\tilde{v}_{\varepsilon}\right)_{\varepsilon}$ solves the Cauchy problems \eqref{parametrised solution ch5} and \eqref{epsilon-parametrised solution ch5}, respectively. Then 
	\begin{eqnarray}\label{uniqueness solution difference ch5}
		\left\{\begin{array}{l}
			\partial_{t}^{2} (v_{\varepsilon}-\tilde{v}_{\varepsilon})(t, x)+a_{\varepsilon}(t) \mathcal{H} (v_{\varepsilon}-\tilde{v}_{\varepsilon})(t, x)+q_{\varepsilon}(t) (v_{\varepsilon}-\tilde{v}_{\varepsilon})(t, x)=h_{\varepsilon}(t, x), \quad(t, x) \in(0, T] \times \mathbb{R}^n, \\
			(v_{\varepsilon}-\tilde{v}_{\varepsilon})(0, x)=0, \quad x \in \mathbb{R}^n,\\
			(\partial_{t} v_{\varepsilon}-\partial_{t} \tilde{v}_{\varepsilon})(0, x)=0, \quad x \in \mathbb{R}^n,
		\end{array}\right.
	\end{eqnarray}	
	where
	$$
	h_{\varepsilon}(t, x):=\left(\tilde{a}_{\varepsilon}-a_{\varepsilon}\right)(t) \mathcal{H}\tilde{v}_{\varepsilon}(t, x)+\left(\tilde{q}_{\varepsilon}-q_{\varepsilon}\right)(t) \tilde{v}_{\varepsilon}(t, x)+\left(f_{\varepsilon}-\tilde{f}_{\varepsilon}\right)(t, x) .
	$$
	Since $\left(a_{\varepsilon}-\tilde{a}_{\varepsilon}\right)_{\varepsilon}$ is $L_{1}^{\infty}$-negligible, $\left(q_{\varepsilon}-\tilde{q}_{\varepsilon}\right)_{\varepsilon}$ is $L^{\infty}$-negligible, and $\left(f_{\varepsilon}-\tilde{f}_{\varepsilon}\right)_{\varepsilon}$ is $L^{2}\left([0, T] ; \mathrm{H}_{\mathcal{H}}^{s}\right)$-negligible, it follows that $h_{\varepsilon}$ is $L^{2}\left([0, T] ; \mathrm{H}_{\mathcal{H}}^{s}\right)$-negligible. Let us denote $w_{\varepsilon}(t, x):=v_{\varepsilon}(t, x)-\tilde{v}_{\varepsilon}(t, x)$. Taking  $\mathcal{H}$-Fourier transform with respect to  $x \in \mathbb{R}^n$ and     using the similar transformation as in  Theorem \ref{very weak sol ch5}, we see that the first order system corresponding to \eqref{uniqueness solution difference ch5} becomes
	\begin{equation}
			\left\{\begin{array}{l}
			\partial_{t} W_{\varepsilon}(t, \xi)=i\langle\xi\rangle A_{\varepsilon}(t) W_{\varepsilon}(t, \xi)+i\langle\xi\rangle^{-1} Q_{\varepsilon}(t) W_{\varepsilon}(t, \xi)+H_{\varepsilon}(t, \xi), \quad \xi \in \mathcal{I},\\
			W_{\varepsilon}(0, \xi)=0, \quad \xi \in \mathcal{I},
		\end{array}\right.
	\end{equation}	
	where $A_{\varepsilon}(t), Q_{\varepsilon}(t)$ are given in \eqref{A_epsilon Q_epsilon defi ch5} and
	\begin{equation}\label{uniqueness transformation ch5}
		W_{\varepsilon}(t, \xi):=\left(\begin{array}{c}
			i\langle\xi\rangle \widehat{w}_{\varepsilon}(t, \xi) \\
			\partial_{t} \widehat{w}_{\varepsilon}(t, \xi)
		\end{array}\right),  \quad W_{0}(\xi):=\left(\begin{array}{c}
		0\\
		0
	\end{array}\right), \quad H_{\varepsilon}(t, \xi):=\left(\begin{array}{c}
	0 \\
	\widehat{h_{\varepsilon}}(t, \xi)
\end{array}\right).
	\end{equation}
For the symmetriser
$$
S_{\varepsilon}(t)=\left(\begin{array}{cc}
	a(t) & 0 \\
	0 & 1
\end{array}\right),
$$
define the energy 
$$
E_{\varepsilon}(t, \xi):=\left(S_{\varepsilon}(t) W_{\varepsilon}(t, \xi), W_{\varepsilon}(t, \xi)\right).
$$
Then,  similar to estimate \eqref{regularised energy weighted estimate ch5}, we  also  can find $c_{0},c_{1}>0$ and $L_{1} \in \mathbb{N}$ such that
\begin{equation}\label{uniqueness energy both side estimate ch5}
	c_{0}\left|W_{\varepsilon}(t, \xi)\right|^{2} \leq E_{\varepsilon}(t, \xi) \leq\left(1+c_{1} \omega(\varepsilon)^{-L_{1}}\right)\left|W_{\varepsilon}(t, \xi)\right|^{2}.
\end{equation}
Adapting  similar techniques used in  the proof of  Theorem \ref{classical sol case1 ch5}, we can observe that
	\begin{equation}\label{uniqueness derivative energy estimate ch5}
			\partial_{t} E_{\varepsilon}(t, \xi)
			\leq c_{00}\omega(\varepsilon)^{-L} (|W_{\varepsilon}(t, \xi)|^{2}+ |F_{\varepsilon}(t, \xi)|^2),
	\end{equation}
where $c_{00}$ is a positive constant and $L\in\mathbb{N}$ is same constnat as in Theorem \ref{regularised cauchy problem ch5}. Now,   applying the Gronwall’s lemma to  the inequality \eqref{uniqueness derivative energy estimate ch5} and  using \eqref{uniqueness energy both side estimate ch5}, we can conclude  
\begin{equation}\label{uniqueness gronwall lemma estimate ch5}
	\left|W_{\varepsilon}(t, \xi)\right|^{2} \leq c_{0}^{-1} e^{c_{00}T\omega(\varepsilon)^{-L}}\left(\left|W_{\varepsilon}(0, \xi)\right|^{2}+\int_{0}^{T}\left|H_{\varepsilon}(\tau, \xi)\right|^{2} \mathrm{~d} \tau\right).
\end{equation}
Let $\omega(\varepsilon)\sim (\log (\varepsilon^{\frac{-L}{c_{oo}}}))^{\frac{-1}{L}}$.  Knowing the fact  that $W_{\varepsilon}(0, \xi) \equiv 0$ for all $\varepsilon \in(0,1]$, we have 
$$
\left|W_{\varepsilon}(t, \xi)\right|^{2} \lesssim \varepsilon^{-L} \int_{0}^{T}\left|H_{\varepsilon}(\tau, \xi)\right|^{2} \mathrm{~d} \tau.
$$
Now using the transformation as in \eqref{uniqueness transformation ch5}, we get
\begin{equation}\label{uniqueness transformation epsilon estimate ch5}
	\langle\xi\rangle^{2}\left|\widehat{w}_{\varepsilon}(t, \xi)\right|^{2}+\left|\partial_{t} \widehat{w}_{\varepsilon}(t, \xi)\right|^{2} \lesssim  \varepsilon^{-L} \int_{0}^{T}\left|\widehat{h}_{\varepsilon}(\tau, \xi)\right|^{2} \mathrm{~d} \tau.
\end{equation}
Multiply \eqref{uniqueness transformation epsilon estimate ch5} by $\langle\xi\rangle^{2s}$ and then   applying Plancherel’s formula, one can conclude that
\begin{equation}\label{uniqueness plancherel estimate ch5}
	\|w_{\varepsilon}(t, \cdot)\|_{\mathrm{H}_{\mathcal{H}}^{1+s}}^{2}+\left\|\partial_t w_{\varepsilon}(t, \cdot)\right\|_{\mathrm{H}_{\mathcal{H}}^{s}}^{2} \lesssim \varepsilon^{-L} \left\|h_{\varepsilon}\right\|_{L^{2}\left([0, T] ; \mathrm{H}_{\mathcal{H}}^{s}\right)}^{2}
\end{equation}
for all $t \in[0, T]$. Using the fact that  $h_{\varepsilon}$ is $L^{2}\left([0, T] ; \mathrm{H}_{\mathcal{H}}^{s}\right)$-negligible,  we get
$$
\left\|w_{\varepsilon}(t, \cdot)\right\|_{\mathrm{H}_{\mathcal{H}}^{1+s}}^{2}+\left\|\partial_{t} w_{\varepsilon}(t, \cdot)\right\|_{\mathrm{H}_{\mathcal{H}}^{s}}^{2} \lesssim \varepsilon^{- L} \varepsilon^{L+q}=\varepsilon^{q}, 
$$
for all $ q \in \mathbb{N}_{0}$ and  $t \in[0, T]$. Taking integration  with respect to  the  $t \in[0, T]$ variable, we obtain
$$
\left\|w_{\varepsilon}\right\|_{L^{2}\left([0, T] ; H_{\mathcal{H}}^{1+s}\right)}^{2}+\left\|\partial_{t} w_{\varepsilon}\right\|_{L^{2}\left([0, T] ; H_{\mathcal{H}}^{s}\right)}^{2} \lesssim \varepsilon^{q}, \quad \text { for all } q \in \mathbb{N}_{0},
$$
with the constant independent of $t$. This implies that  $w_{\varepsilon}(t, x)=v_{\varepsilon}(t, x)-\tilde{v}_{\varepsilon}(t, x)$ is $L^{2}\left([0, T] ; \mathrm{H}_{\mathcal{H}_{\hbar, V}}^{1+s}\right)$-negligible and this completes the proof of the theorem.
\end{proof}
In the following result  we   show that very weak solutions are consistent.
\begin{proof}[Proof of Theorem \ref{consistency of sol ch5}]
	 Let $\tilde{v}$ represents the classical solution in Theorem \ref{classical sol case1 ch5}. By definition of the classic solution and very weak solution, we know that
	\begin{equation}\label{consistency classic sol ch5}
		\left\{\begin{array}{l}
			\partial_{t}^{2} \tilde{v}(t, x)+a(t) \mathcal{H} \tilde{v}(t,x)+q(t)\tilde{v}(t, x)=f(t, x),(t, x), \quad (t, x) \in (0, T] \times \mathbb{R}^n, \\
			\tilde{v}(0, k)=v_{0}(x), \quad x \in \mathbb{R}^n, \\
			\partial_{t} \tilde{v}(0, x)=v_{1}(x), \quad x \in \mathbb{R}^n,
		\end{array}\right.
	\end{equation}
and there exists a net $\left(v_{\varepsilon}\right)_{\varepsilon}$ such that
\begin{equation}\label{consistency very weak sol ch5}
\left\{\begin{array}{l}
	\partial_{t}^{2} v_{\varepsilon}(t, x)+a_{\varepsilon}(t) \mathcal{H} v_{\varepsilon}(t, x)+q_{\varepsilon}(t) v_{\varepsilon}(t, x)=f_{\varepsilon}(t, x),(t, x), \quad(t, x) \in(0, T] \times \mathbb{R}^n, \\
	v_{\varepsilon}(0, k)=u_{0}(x), \quad x \in \mathbb{R}^n, \\
	\partial_{t} v_{\varepsilon}(0, x)=u_{1}(x), \quad x \in \mathbb{R}^n.
\end{array}\right.
\end{equation}
 We also can rewrite \eqref{consistency classic sol ch5} as follows:
\begin{equation}\label{consistency modified classic sol ch5}
	\left\{\begin{array}{l}
		\partial_{t}^{2} \tilde{v}(t, x)+a_{\varepsilon}(t) \mathcal{H}\tilde{v}(t, x)+q_{\varepsilon}(t) \tilde{v}(t, x)=f_{\varepsilon}(t, x)+h_{\varepsilon}(t, x),(t, x) \in(0, T] \times \mathbb{R}^n, \\
		\tilde{v}(0, x)=v_{0}(x), \quad x \in \mathbb{R}^n,\\
		\partial_{t} \tilde{v}(0, x)=v_{1}(x), \quad x \in \mathbb{R}^n,
	\end{array}\right.
\end{equation}
where
$$
h_{\varepsilon}(t, x):=\left(a_{\varepsilon}-a\right)(t) \mathcal{H} \tilde{v}(t, x)+\left(q_{\varepsilon}-q\right)(t) \tilde{v}(t, x)+\left(f-f_{\varepsilon}\right)(t, x).
$$
For $a \in L_{1}^{\infty}([0, T])$, $b \in L^{\infty}([0, T])$, and $f \in L^{2}\left([0, T] ; \mathrm{H}_{\mathcal{H}}^{s}\right)$,    
 since the nets $\left(a_{\varepsilon}-a\right)_{\varepsilon}$, $\left(q_{\varepsilon}-q\right)_{\varepsilon}$, and $\left(f_{\varepsilon}-f\right)_{\varepsilon}$ are converging to 0 as $\varepsilon \rightarrow 0$,    then  $h_{\varepsilon} \in L^{2}\left([0, T] ; \mathrm{H}_{\mathcal{H}}^{s}\right)$ and $h_{\varepsilon} \rightarrow 0$ in $L^{2}\left([0, T] ; \mathrm{H}_{\mathcal{H}}^{s}\right)$ as $\varepsilon \rightarrow 0$. From \eqref{consistency very weak sol ch5} and \eqref{consistency modified classic sol ch5}, we get that $w_{\varepsilon}(t, x):=\left(\tilde{v}-v_{\varepsilon}\right)(t, x)$ solves the Cauchy problem
\begin{equation}\label{consistency modified very weak sol ch5}
	\left\{\begin{array}{l}
		\partial_{t}^{2} w_{\varepsilon}(t, x)+a_{\varepsilon}(t) \mathcal{H} w_{\varepsilon}(t, x)+q_{\varepsilon}(t) w_{\varepsilon}(t, x)=h_{\varepsilon}(t, x), \quad(t, x) \in(0, T] \times\mathbb{R}^n, \\
		w_{\varepsilon}(0, x)=0, \quad x \in \mathbb{R}^n, \\
		\partial_{t} w_{\varepsilon}(0, x)=0, \quad x \in \mathbb{R}^n.
	\end{array}\right.
\end{equation}
Now, one can obtain the following energy estimate using the similar techniques as in  the proof of the Theorem \ref{classical sol case1 ch5}
\begin{equation}\label{consistency energy estimate ch5}
	\begin{aligned}[b]
		\partial_{t} E_{\varepsilon}(t, \xi) \leq& \left(\left|\partial_{t} a_{\varepsilon}(t)\right|+|q_{\varepsilon}(t)|+|a_{\varepsilon}(t)|+1+\left| a_{\varepsilon}(t)\right|\right)|W_{\varepsilon}(t, \xi)|^{2}+(1+\left| a_{\varepsilon}(t)\right|)|H_{\varepsilon}(t, \xi)|^2\\
		\leq& c(E_{\varepsilon}(t, \xi) + |H_{\varepsilon}(t, \xi)|^2),
	\end{aligned}
\end{equation}
where $c$ is a positive constant. Applying the Gronwall’s lemma to  the inequality \eqref{consistency energy estimate ch5} and using the
energy bounds similar to \eqref{uniqueness energy both side estimate ch5}, we can conclude that
$$
\left|W_{\varepsilon}(t, \xi)\right|^{2} \lesssim\left|W_{\varepsilon}(0, \xi)\right|^{2}+\int_{0}^{T}\left|H_{\varepsilon}(\tau, \xi)\right|^{2} \mathrm{~d} \tau.
$$
Now,   applying Plancherel’s formula and using the fact that $W_{\varepsilon}(0, \xi) \equiv 0$ for all $\varepsilon \in(0,1]$, we have
$$
\left\|w_{\varepsilon}(t, \cdot)\right\|_{\mathrm{H}_{\mathcal{H}}^{1+s}}^{2}+\left\|\partial_{t} w_{\varepsilon}(t, \cdot)\right\|_{\mathrm{H}_{\mathcal{H}}^{s}}^{2} \lesssim\left\|h_{\varepsilon}\right\|_{L^{2}\left([0, T] ; \mathrm{H}_{\mathcal{H}}^{s}\right)}^{2}
$$
for all $t \in[0, T]$. Taking integration to the above estimate with respect to  $t \in[0, T]$  variable, we can write   
$$
\left\|w_{\varepsilon}\right\|_{L^{2}\left([0, T] ; H_{\mathcal{H}}^{1+s}\right)}^{2}+\left\|\partial_{t} w_{\varepsilon}\right\|_{L^{2}\left([0, T] ; H_{\mathcal{H}}^{s}\right)}^{2} \lesssim \left\|h_{\varepsilon}\right\|_{L^{2}\left([0, T] ; \mathrm{H}_{\mathcal{H}}^{s}\right)}^{2}
$$
with the constant independent of $t $. Since $h_{\varepsilon} \rightarrow 0$ in $L^{2}\left([0, T] ; \mathrm{H}_{\mathcal{H}}^{s}\right)$, then  
$$
w_{\varepsilon} \rightarrow 0 \text { in } L^{2}\left([0, T] ; \mathrm{H}_{\mathcal{H}}^{1+s}\right) \quad \text{as}~~ \varepsilon \rightarrow 0,
$$
i.e.,
$$
v_{\varepsilon} \rightarrow \tilde{v} \text { in } L^{2}\left([0, T] ; \mathrm{H}_{\mathcal{H}}^{1+s}\right) \quad  \text{as}~~ \varepsilon \rightarrow 0.
$$
Since, any two representations of $v$ will differ from $\left(v_{\varepsilon}\right)_{\varepsilon}$ by a $L^{2}\left([0, T] ; \mathrm{H}_{\mathcal{H}}^{1+s}\right)$-negligible net, hence the limit is same for every representation of $v$. This completes the proof of the theorem.
\end{proof}

\section*{Acknowledgement}
 The first author was supported by  Core Research Grant, RP03890G,  Science and Engineering
	Research Board (SERB), DST,  India. The second author is    supported by the
	institute assistantship from the Indian Institute of Technology Delhi, India. The last author is supported by the DST-INSPIRE Faculty Fellowship DST/INSPIRE/04/2023/002038.  
	

\end{document}